\numberwithin{equation}{section}
\newcommand{\R}{\ensuremath{\mathbb{R}}}
\newcommand{\N}{\ensuremath{\mathbb{N}}}
\newcommand{\Z}{\ensuremath{\mathbb{Z}}}
\newtheorem{theorem}{Theorem}[section]
\newtheorem{corollary}{Corollary}[theorem]
\newtheorem{proposition}[theorem]{Proposition}
\newtheorem{lemma}[theorem]{Lemma}
\theoremstyle{definition} 
\newtheorem{defn}[theorem]{Definition}
\newtheorem{remark}[theorem]{Remark} 
\newtheorem{notation}{Notation}[section]
\newtheorem{Construction}{Construction}[section]
\newcommand{\pr}{{\mathrm{pr}}}
\newcommand{\Ob}{\mathcal{O}b}
\newcommand{\mb}[1]{\mathbf{#1}}
\newcommand{\PH}[1]{\mathbf{P}^{({#1})}}
\DeclareMathOperator*{\Int}{Int}
\DeclareMathOperator*{\Diff}{Diff}
\DeclareMathOperator*{\Emb}{Emb}
\DeclareMathOperator*{\BDiff}{BDiff}
\DeclareMathOperator*{\Hom}{Hom}
\DeclareMathOperator*{\Ab}{Ab}
\DeclareMathOperator*{\kernel}{Ker}
\DeclareMathOperator*{\cokernel}{Coker}
\author{Nathan Perlmutter}
\address{Department of Mathematics, University of Oregon,
Eugene, OR,
  97403, USA} 
  \email{nperlmut@uoregon.edu}
\title[Homological Stability For The Moduli Spaces of Products of Spheres]{Homological Stability For The Moduli Spaces of Products of Spheres}
\begin{document}
\maketitle
\begin{abstract}

We prove a homological stability theorem for moduli spaces of
high-dimensional, highly connected manifolds, with respect to forming
the connected sum with the product of spheres $S^{p}\times S^{q}$, for $p
< q < 2p - 2$. This result is analogous to recent results of
S. Galatius and O. Randal-Williams from \cite{GRW 12} and \cite{GRW
  14} regarding the homological stability for the moduli spaces of
manifolds of dimension $2n > 4$, with respect to forming connected
sums with $S^{n}\times S^{n}$.
\end{abstract}

\section{Introduction} \label{Introduction} 
\subsection{Statement of Main Results} \label{subsection: statement of results}
Recently S. Galatius and O. Randal-Williams proved a homological
  stability theorem for moduli spaces of certain manifolds of dimension
  $2n > 4$, with respect to forming connected sums with $S^{n}\times S^{n}$, see \cite{GRW 12} and \cite{GRW 14}.
We prove an analogous homological stability result for the moduli spaces of high-dimensional, highly connected manifolds, with respect to forming
the connected sum with $S^{p}\times S^{q}$, for $p
< q < 2p - 2$.

For what follows, let $M$ be a smooth, $m$-dimensional, compact manifold.  
Denote by $\Diff^{\partial}(M)$ the group of self diffeomorphisms of $M$
which fix some neighborhood of the boundary pointwise, topologized in the $C^{\infty}$-topology.
Our main result concerns the homology of the classifying space $\BDiff^{\partial}(M)$. Following \cite{GRW 14}, the classifying space $\BDiff^{\partial}(M)$ is called the \textit{moduli space of manifolds of type $M$}.

Suppose now that $M$ has non-empty boundary. 
Fix positive integers $p$ and $q$ with $p \leq q$ and $p + q = m$. 
Denote by $V_{p,q}$ the manifold obtained by forming the connected sum of $[0,1]\times\partial M$ with $S^{p}\times S^{q}.$
We then denote by  $M\cup_{\partial M}V_{p,q}$ the manifold obtained by gluing the boundary of $M$ to the cobordism $V_{p,q}$ along $\{0\}\times\partial M \subset V_{p,q}$.
Clearly there is a  diffeomorphism,
$$M\#(S^{p}\times S^{q}) \cong M\cup_{\partial M}V_{p,q}.$$
Consider the homomorphism $\Diff^{\partial}(M) \rightarrow \Diff^{\partial}(M\cup_{\partial M}V_{p,q})$ defined by sending an element $f \in \Diff^{\partial}(M)$ to the diffeomorphism of $M\cup_{\partial M}V_{p,q}$ obtained by extending $f$ identically over $V_{p,q}$.
This homomorphism induces a continuous map on the level of classifying spaces, which we denote by
\begin{equation} \label{eq: classifying space stabilization}
\xymatrix{
s_{p,q}: \BDiff^{\partial}(M) \longrightarrow \BDiff^{\partial}(M\cup_{\partial M}V_{p,q}). 
}
\end{equation}
We refer to (\ref{eq: classifying space stabilization}) as the $(p,q)$-th stabilization map. 
\begin{remark}
In the case that $\partial M$ is not path-connected, the diffeomorphism type of $V_{p,q}$, and hence the homotopy type of the map $s_{p,q}$ will depend on the path component of $\partial M$ that the connected sum is formed in. 
However, our main theorem (Theorem \ref{thm: Main Theorem 1}) holds for any such choice.  
\end{remark}

For $g \in \N$, let
$W^{g}_{p,q}$ 
be the manifold obtained by deleting an $m$-dimensional open disk from the $g$-fold connected-sum, $(S^{p}\times S^{q})^{\# g}$, i.e. $W^{g}_{p,q} = (S^{p}\times S^{q})^{\# g}\setminus\Int(D^{m})$.
Let $r_{p,q}(M)$ be the quantity defined by setting,
$$r_{p,q}(M) := \max\{\; g \in \N \; | \; \text{there exist an embedding $W^{g}_{p,q} \rightarrow M$}\; \}.$$
We refer to the quantity $r_{p,q}(M)$ as the \textit{$(p,q)$-rank} of of the manifold $M$.
In \cite[Theorem 1.2]{GRW 14} S. Galatius and O. Randal-Williams prove that if $M$ is simply-connected, $m = 2n \geq 6$, and $p = q = n$, 
then the map induced by $s_{n, n}$ on integral homology, 
$$\xymatrix{
(s_{n, n})_{*}: H_{k}(\BDiff^{\partial}(M);\; \Z) \longrightarrow H_{k}(\BDiff^{\partial}(M\cup_{\partial M}V_{n, n});\; \Z)
}$$
is an isomorphism if $2k \leq r_{n, n}(M) - 3$ and an epimorphism if $2k \leq r_{n, n}(M) - 1$.

In this paper we extend this homological stability result from \cite{GRW 14} to more general choices of $p$ and $q$ where $p < q$. 
The homotopy group $\pi_{q}(S^{p})$ will play an important role in the proof of our main theorem. 
For any finitely generated abelian group $H$, we let $d(H)$ be the quantity defined by setting,
$$
d(H) := \min\{k \in \N | \; \text{there exists an epimorphism} \; \Z^{\oplus k} \rightarrow H\; \}.
$$
In other words, $d(H)$ is the length of a minimal generating set of $H$. We will refer to $d(H)$ as the \textit{generating set length} of $H$.
With $p$ and $q$ chosen, the quantity $d(\pi_{q}(S^{p}))$ will be a parameter in the statement of our main theorem; it will effect the range of the homological stability result. 
Throughout the paper we will denote by $\kappa(M)$ the \textit{degree of connectivity of $M$}. That is, 
$\kappa(M) := \sup\{\; j \in \N \; | \; \pi_{i}(M) = 0 \; \text{for all $i \leq j$} \; \}.$
For the statement of our main theorem, suppose that the inequalities,
\begin{equation} \label{eq: connectivity condition}
p < q < 2p - 2 \quad \text{and} \quad q - p + 1 < \kappa(M)
\end{equation}
are satisfied. 
\begin{remark}
It is an immediate consequence of the first inequality of (\ref{eq: connectivity condition}) that $p \geq 4$ and that 
$m = p \; + \; q = \dim(M) \geq 9.$ Since $q < 2p - 2$, the \textit{Freudenthal Suspension Theorem} implies that the stabilization map $\pi_{q}(S^{p}) \longrightarrow \pi^{S}_{q-p}$ is an isomorphism. 
It follows that the homotopy group $\pi_{q}(S^{p})$ is a finite abelian group whenever $p < q < 2p - 2$.
\end{remark}
 
\begin{theorem} \label{thm: Main Theorem 1} Let $M$ be a compact manifold with non-empty boundary.
Let $p$ and $q$ be positive integers with $p+ q = \dim(M)$, such that 
$p < q < 2p - 2$  and $q - p + 1 < \kappa(M).$
Let $d$ denote the integer $d(\pi_{q}(S^{p}))$. 
The map,
$$\xymatrix{
(s_{p,q})_{*}: H_{k}(\BDiff^{\partial}(M); \; \Z) \; \longrightarrow \; H_{k}(\BDiff^{\partial}(M\cup_{\partial M}V_{p,q}); \; \Z)
}$$
is an isomorphism if $2k \leq r_{p,q}(M) - 3 - d$ and an epimorphism if $2k \leq r_{p,q}(M) - 1 - d$.
\end{theorem}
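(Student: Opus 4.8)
The plan is to adapt the resolution-and-spectral-sequence method of Galatius and Randal-Williams from \cite{GRW 12} and \cite{GRW 14}. Write $M' := M \cup_{\partial M} V_{p,q}$, so that $r_{p,q}(M') = r_{p,q}(M) + 1$ and $M'$ again satisfies the hypotheses of the theorem: the strict inequality $q < 2p - 2$ is exactly what is needed to propagate the connectivity condition, since $M' \simeq M \vee (S^p \vee S^q)$ through a range of dimensions and hence $\kappa(M') \geq \min\{\kappa(M),\, p - 1\} \geq q - p + 2$. Replacing each moduli space $\BDiff^\partial(N)$ by a convenient point-set model as a space of submanifolds of $\R^\infty$, I would construct for $Y_{-1} := \BDiff^\partial(M')$ an augmented semi-simplicial space $\epsilon \colon Y_\bullet \to Y_{-1}$ with the following properties. (i) The augmentation $|Y_\bullet| \to Y_{-1}$ is highly connected, its connectivity growing linearly in $r_{p,q}(M') - d$. (ii) The space $Y_s$ of $s$-simplices is homotopy equivalent to a disjoint union of moduli spaces $\BDiff^\partial(N)$, over finitely many diffeomorphism types $N$, each satisfying the hypotheses of the theorem and with $r_{p,q}(N) = r_{p,q}(M') - (s + 1)$. (iii) The $i$-th face map $Y_s \to Y_{s-1}$ restricts on each component to a stabilization map $s_{p,q}$, and the augmentation $Y_0 \to Y_{-1}$ restricts on a distinguished component to $s_{p,q} \colon \BDiff^\partial(M) \to \BDiff^\partial(M')$. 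Granting (i)--(iii), the theorem follows by induction on $r_{p,q}(M)$ together with the spectral sequence of $\epsilon$, as described in the final paragraph.

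\textbf{The resolution.} For the space $Y_s$ of $s$-simplices I would take the subspace of the model for $\BDiff^\partial(M')$ consisting of a submanifold equipped with an ordered $(s+1)$-tuple of pairwise disjoint embedded copies of the block $V_{p,q}$ --- equivalently, a tuple of disjoint embedded \emph{destabilization data}, each consisting of a framed embedded $p$-sphere together with a geometrically dual framed embedded $q$-sphere and a chosen standard neighborhood --- required to be standard near a fixed disk in the boundary and arranged so that cutting the ambient manifold along the $i$-th block exhibits it as $N_i \cup_\partial V_{p,q}$. The augmentation forgets the tuple, and the $i$-th face map deletes the $i$-th block. Property (ii) then follows from parametrized tubular-neighborhood and isotopy-extension arguments, using the contractibility of the relevant spaces of collars, framings, and embeddings; property (iii) is immediate, the distinguished component of $Y_0$ corresponding to the standard block created by the last stabilization.

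\textbf{The main obstacle.} The crux is the high-connectivity estimate (i), and this is where I expect the real work to lie. Following \cite{GRW 12}, one argues by surgery and disjunction: given a map of an $\ell$-sphere or $\ell$-disk into $Y_{-1}$ --- that is, a family of submanifolds --- one must produce, after a controlled fibrewise surgery and ambient isotopy, a compatible choice of the embedded $V_{p,q}$-blocks. The inequalities $p < q < 2p - 2$ enter through general position and Whitney-type arguments valid in the metastable range $2p < m = p + q$, which permit embedded $p$- and $q$-spheres representing prescribed homology classes to be constructed, made mutually disjoint, and framed; the hypothesis $q - p + 1 < \kappa(M)$ ensures that the ambient manifolds in the family are connected enough for the corresponding homotopy-theoretic obstructions to vanish. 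The finite abelian group $\pi_q(S^p) \cong \pi^S_{q-p}$ enters precisely at the step where an embedded framed $p$-sphere with a dual framed $q$-sphere is completed to a \emph{standard} $V_{p,q}$-block: up to isotopy, the set of such completions is a torsor under (a quotient of) $\pi_q(S^p)$, and removing this indeterminacy across a family costs one additional simplicial coordinate for each element of a minimal generating set of $\pi_q(S^p)$. This is the source of the shift by $d = d(\pi_q(S^p))$ in the connectivity bound, hence of the $-d$ in the final stability range; the delicate bookkeeping here --- tracking normal-bundle and framing data across families and pinning down the $\pi_q(S^p)$-indeterminacy --- is the principal technical hurdle.

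\textbf{Assembling the argument.} With the resolution in hand, the augmented semi-simplicial space yields a homology spectral sequence with $E^1_{s,t} = H_t(Y_s; \Z)$ for $s \geq -1$, so that $E^1_{-1,t} = H_t(\BDiff^\partial(M'); \Z)$, converging to $0$ in total degrees $s + t$ below the connectivity of $\epsilon$; the differential $d^1 \colon E^1_{s,t} \to E^1_{s-1,t}$ is the alternating sum of the maps induced by the face and augmentation maps, each of which is a stabilization map $s_{p,q}$ onto an admissible manifold of $(p,q)$-rank one larger by (iii). Feeding in the inductive hypothesis that the theorem holds for all admissible manifolds of $(p,q)$-rank strictly smaller than $r_{p,q}(M)$, the standard diagram chase through this spectral sequence (as in \cite{GRW 14}) shows that $(s_{p,q})_*$ is surjective once $2k \leq r_{p,q}(M) - 1 - d$ and bijective once $2k \leq r_{p,q}(M) - 3 - d$, where the precise constants are dictated by the connectivity bound in (i). This closes the induction and completes the proof.
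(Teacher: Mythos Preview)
The overall shape of your argument --- build a semi-simplicial resolution of the moduli space and run the spectral sequence --- matches the paper. But the heart of the proof, namely the high-connectivity estimate (i), is handled quite differently in the paper, and your sketch of that step has a real gap.

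First, the connectivity of the augmentation is \emph{not} proven by fibrewise surgery on families of submanifolds. In the paper (as in \cite{GRW 12} and \cite{GRW 14}) the augmentation $X_l(M)_{p,q} \to \mathcal{M}(M)$ is a genuine fibre bundle whose fibre over a point is the semi-simplicial space $\bar K_\bullet(M)_{p,q}$ associated to a \emph{single} manifold; so (i) reduces to showing that $|K(M)_{p,q}|$ is $\tfrac{1}{2}(g-4-d)$-connected for one fixed $M$ with $r_{p,q}(M)\ge g$. No parametrized argument is needed.

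Second, that single-manifold connectivity is not proven directly by geometric disjunction. The paper introduces an intermediate \emph{algebraic} object, the Wall form $\mathcal{W}_{p,q}(M)$ (the pair $(\pi_p(M),\pi_q(M))$ equipped with $\tau_{p,q}$, the intersection pairings $\lambda_{p,q}$ and $\mu_q$, and the normal-bundle maps $\alpha_p,\alpha_q$), and an algebraic complex $L(\mathcal{W}_{p,q}(M))$ whose $l$-simplices are $(l+1)$-tuples of pairwise orthogonal morphisms $\mb{W}\to\mathcal{W}_{p,q}(M)$. One proves purely algebraically that $|L(\mb{M})|$ is $\tfrac{1}{2}(g-4-d)$-connected whenever the Wall form $\mb{M}$ has rank $\ge g$, and then lifts simplices from $L$ back to $K$ using the Whitney trick together with Wells' higher Whitney trick (to separate two embedded $q$-spheres with $\mu_q=0$). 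Your outline omits this algebraic layer entirely, and without it there is no mechanism in your sketch for actually establishing the connectivity bound.

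Third, your account of where $d$ enters is not right. It is not that ``completions of a dual pair to a standard block form a $\pi_q(S^p)$-torsor.'' The shift by $d$ is an algebraic phenomenon inside the Wall-form machinery: the secondary pairing $\mu_q$ takes values in $H:=\pi_q(S^p)$, so orthogonality to a given vertex imposes, in addition to integer-valued conditions, an $H$-valued condition. Concretely, the key step (Lemma~\ref{lemma: automorphism 2} and Corollary~\ref{prop: linear functional 1}) shows that passing to the kernel of a single $H$-map $\mb{M}\to\PH{1}$ drops the rank by at most $d+1$: an arbitrary element of $\mb{W}^g_+\cong \Z^g\oplus H^g$ can be moved by an automorphism into a $\mb{W}^{d+1}$-summand, precisely because any element of $H^g$ is expressible using a generating set of $H$ of length $d$. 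This is what produces the $-d$ in the connectivity range; a torsor-of-framings heuristic would not yield this estimate.

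In short: your resolution/spectral-sequence frame is correct, but the substance of the paper is the algebraic connectivity theorem for $L(\mb{M})$ and the geometric lifting from $L$ to $K$, both of which are missing from your plan.
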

A corollary of Theorem \ref{thm: Main Theorem 1} is obtained by setting
$M = W_{p,q}^{g}$
for some $g \in \N$. 
We then have $\partial W_{p,g}^{g} = S^{p+q-1}$ and $V_{p,q} = ([0,1]\times S^{p+q-1})\#(S^{p}\times S^{q})$ and so there is 
a diffeomorphism relative to the boundary,
$W_{p,q}^{g+1} \cong W_{p,q}^{g}\cup_{\partial W^{g}_{p,q}}V_{p,q}.$
Thus (\ref{eq: classifying space stabilization}) yields the map, 
$$\xymatrix{
s_{p,q}: \BDiff^{\partial}(W_{p,q}^{g}) \longrightarrow \BDiff^{\partial}(W_{p,q}^{g+1}).
}
$$
In this case, with $p < q$ as above, we have that $\kappa(W^{g}_{p,q}) = p-1$. With $m = p+q$, condition (\ref{eq: connectivity condition}) then translates to the single inequality $p< q < 2p - 2$. 
We have:
\begin{corollary} \label{cor: corollary 1}
Let $p$ and $q$ be positive integers such that $p < q < 2p - 2$ and let $d$ denote the integer $d(\pi_{q}(S^{p}))$.
Then 
$\xymatrix{
(s_{p,q})_{*}: H_{k}(\BDiff^{\partial}(W^{g}_{p,q});\; \Z) \longrightarrow H_{k}(\BDiff^{\partial}(W^{g+1}_{p, q}); \; \Z)
}$
is an isomorphism when $2k \leq g - 3 - d$ and an epimorphism when $2k \leq g - 1 - d$.
\end{corollary}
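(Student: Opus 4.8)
The plan is to derive Corollary~\ref{cor: corollary 1} as a direct specialization of Theorem~\ref{thm: Main Theorem 1}, so the work is entirely in verifying that the hypotheses of the theorem are met when $M = W^{g}_{p,q}$, and in computing the relevant numerical invariants. First I would recall that $W^{g}_{p,q} = (S^{p}\times S^{q})^{\#g}\setminus\Int(D^{m})$ is a compact manifold whose boundary is the sphere $S^{p+q-1}$, which is non-empty, so the basic standing hypothesis of the theorem is satisfied. Next I would record the diffeomorphism $W^{g+1}_{p,q}\cong W^{g}_{p,q}\cup_{\partial W^{g}_{p,q}}V_{p,q}$ relative to the boundary, which is already noted in the excerpt: since $\partial W^{g}_{p,q}=S^{p+q-1}$, the cobordism $V_{p,q}$ is $([0,1]\times S^{p+q-1})\#(S^{p}\times S^{q})$, and gluing it on increases the connected-sum count by one. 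This identifies the stabilization map $s_{p,q}\colon \BDiff^{\partial}(W^{g}_{p,q})\to\BDiff^{\partial}(W^{g+1}_{p,q})$ with the map~(\ref{eq: classifying space stabilization}) in the case $M=W^{g}_{p,q}$.

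The second step is to check the connectivity condition~(\ref{eq: connectivity condition}). Here I would compute $\kappa(W^{g}_{p,q})$: the manifold $(S^{p}\times S^{q})^{\#g}$ is $(p-1)$-connected (each factor $S^{p}\times S^{q}$ is, for $p<q$, and the connected sum of highly connected manifolds inherits the connectivity up to the smaller sphere dimension; removing an open disk of dimension $m=p+q>p$ does not change connectivity in the relevant range), while $\pi_{p}$ is non-trivial, so $\kappa(W^{g}_{p,q})=p-1$. Then the inequality $q-p+1<\kappa(M)$ becomes $q-p+1<p-1$, i.e. $q<2p-2$, which is already part of the hypothesis $p<q<2p-2$ of the corollary. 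Thus both inequalities of~(\ref{eq: connectivity condition}) reduce, in this special case, to the single assumption $p<q<2p-2$ that we are granting.

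The third step is to compute the $(p,q)$-rank $r_{p,q}(W^{g}_{p,q})$. By definition this is the largest $h$ for which $W^{h}_{p,q}$ embeds in $W^{g}_{p,q}$; the obvious inclusion coming from the decomposition $(S^{p}\times S^{q})^{\#g}=(S^{p}\times S^{q})^{\#h}\#(S^{p}\times S^{q})^{\#(g-h)}$ shows $r_{p,q}(W^{g}_{p,q})\geq g$, and an argument with the rank of $H_{p}$ (or $H_{q}$) of the ambient manifold — which is $2g$, bounding the corresponding rank $2h$ of any embedded $W^{h}_{p,q}$ — shows $r_{p,q}(W^{g}_{p,q})\leq g$; hence $r_{p,q}(W^{g}_{p,q})=g$. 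Substituting $r_{p,q}(M)=g$ into Theorem~\ref{thm: Main Theorem 1} immediately yields that $(s_{p,q})_{*}$ is an isomorphism when $2k\leq g-3-d$ and an epimorphism when $2k\leq g-1-d$, with $d=d(\pi_{q}(S^{p}))$ exactly as in the theorem statement. This completes the proof.

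I do not expect a genuine obstacle here, since the corollary is a formal consequence of the main theorem; the only points requiring a little care are the connectivity computation $\kappa(W^{g}_{p,q})=p-1$ (which uses $p<q$, so the two sphere factors have distinct dimensions and the lower cell is in degree $p$) and the rank computation $r_{p,q}(W^{g}_{p,q})=g$ (whose upper bound should be justified via homology, e.g. by noting that an embedding $W^{h}_{p,q}\hookrightarrow W^{g}_{p,q}$ forces $2h=\mathrm{rank}\,H_{p}(W^{h}_{p,q})\leq\mathrm{rank}\,H_{p}(W^{g}_{p,q})=2g$, using that the map on $H_{p}$ is split injective because the complement allows one to see the dual classes in $H_{q}$). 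Both of these are standard and can be dispatched in a sentence each.
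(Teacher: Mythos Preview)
Your proposal is correct and matches the paper's approach exactly: the corollary is obtained by specializing Theorem~\ref{thm: Main Theorem 1} to $M = W^{g}_{p,q}$, checking $\kappa(W^{g}_{p,q}) = p-1$ so that (\ref{eq: connectivity condition}) reduces to $p<q<2p-2$, and using $r_{p,q}(W^{g}_{p,q}) \geq g$. Two minor remarks: only the lower bound $r_{p,q}(W^{g}_{p,q}) \geq g$ (immediate from the identity embedding) is needed for the stated conclusion, so your upper-bound argument is superfluous; and in that argument the rank of $H_{p}(W^{g}_{p,q})$ is $g$, not $2g$, since $p<q$.
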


One particular case of note is obtained by setting $q = p + 1$. The inequality $p < q < 2p - 2$ is then satisfied when $p \geq 4$. With $p \geq 4$ there is an isomorphism $\pi_{p+1}(S^{p}) \cong \Z/2$ and so $d(\pi_{p+1}(S^{p})) = 1$. We have:
\begin{corollary}
Let $p \geq 4$. Then the homomorphism
$$\xymatrix{
(s_{p,p+1})_{*}: H_{k}(\BDiff^{\partial}(W^{g}_{p, p+1});\; \Z) \longrightarrow H_{k}(\BDiff^{\partial}(W^{g+1}_{p, p+1}); \; \Z)
}$$
is an isomorphism when  $2k \leq g - 4$ and an epimorphism when  $2k \leq g - 2$.
\end{corollary}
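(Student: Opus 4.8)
The plan is to deduce this statement directly from Corollary \ref{cor: corollary 1} by specializing to $q = p+1$ and then evaluating the parameter $d = d(\pi_{p+1}(S^p))$. First I would check that the hypotheses of Corollary \ref{cor: corollary 1} are met for this choice: with $q = p+1$ the double inequality $p < q < 2p - 2$ reads $p < p+1 < 2p - 2$, where the left-hand inequality is automatic and the right-hand inequality $p+1 < 2p - 2$ is equivalent to $p > 3$, i.e.\ $p \geq 4$. Hence for every $p \geq 4$ the corollary applies with $q = p+1$.

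Next I would identify the homotopy group $\pi_{p+1}(S^p)$. As recorded in the Remark preceding Theorem \ref{thm: Main Theorem 1}, the inequality $q < 2p-2$ (here $p+1 < 2p-2$, valid since $p \geq 4$) guarantees, via the Freudenthal Suspension Theorem, that the stabilization map $\pi_{p+1}(S^p) \to \pi^{S}_{1}$ onto the first stable stem is an isomorphism. Since $\pi^{S}_{1} \cong \Z/2$, it follows that $\pi_{p+1}(S^p) \cong \Z/2$ for all $p \geq 4$. A minimal generating set of $\Z/2$ has a single element, so $d = d(\pi_{p+1}(S^p)) = d(\Z/2) = 1$.

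Finally I would substitute $d = 1$ into the conclusion of Corollary \ref{cor: corollary 1}: the map $(s_{p,p+1})_*$ is an isomorphism when $2k \leq g - 3 - d = g - 4$ and an epimorphism when $2k \leq g - 1 - d = g - 2$, which is precisely the asserted range. There is essentially no obstacle beyond correctly recalling that $\pi^{S}_{1} \cong \Z/2$ and verifying the elementary inequality that pins the hypothesis down to $p \geq 4$; all of the substantive work is already contained in Corollary \ref{cor: corollary 1}, and ultimately in Theorem \ref{thm: Main Theorem 1}.
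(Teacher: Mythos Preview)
Your proposal is correct and follows exactly the same approach as the paper: the paper observes, immediately before stating the corollary, that setting $q=p+1$ makes the inequality $p<q<2p-2$ equivalent to $p\geq 4$, that $\pi_{p+1}(S^{p})\cong\Z/2$ so $d=1$, and then the result is read off from Corollary~\ref{cor: corollary 1}.
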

In addition to these homological stability results, our techniques yield a ``cancelation'' theorem analogous to \cite[Corollary 6.3]{GRW 14}, for connected sums with products of spheres. 
\begin{theorem} \label{theorem: cancelation}
Let $P$ be a closed $(m-1)$-dimensional manifold and 
let $M$ and $M'$ be $m$-dimensional manifolds equipped with a specified identification, $\partial M = \partial M' = P$.
 Let $p$ and $q$ be positive integers with $p+ q = m$ such that
$p < q < 2p - 2$  and $q - p + 1 < \kappa(M)$. Let $d$ denote the integer $d(\pi_{q}(S^{p}))$. 
Suppose that there is a diffeomorphism $M\#(S^{p}\times S^{q}) \cong M'\#(S^{p}\times S^{q})$ relative to $P$. 
If $r_{p,q}(M) \geq 3 + d$, then there is a diffeomorphism $M \cong M'$ relative to $P$. 
\end{theorem}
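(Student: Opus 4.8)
The plan is to deduce Theorem \ref{theorem: cancelation} from the connectivity of the semi-simplicial resolution of $\BDiff^{\partial}(-)$ that is constructed in the course of proving Theorem \ref{thm: Main Theorem 1}, applied to the fixed manifold $N := M\#(S^{p}\times S^{q})$. The point is that this cancellation statement is the ``$\pi_{0}$-shadow'' of the resolution being highly connected, exactly as in \cite[Corollary 6.3]{GRW 14}.

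First I would check that $N$ satisfies the standing hypotheses, with rank one larger. Since $S^{p}\times S^{q}$ is $(p-1)$-connected and $q<2p-2$ gives $q-p+1<p-1$, the connected sum satisfies $\kappa(N)=\min(\kappa(M),p-1)>q-p+1$; and an embedding $W^{g}_{p,q}\hookrightarrow M$ together with the new summand yields $W^{g+1}_{p,q}\hookrightarrow N$, so $r_{p,q}(N)\geq r_{p,q}(M)+1\geq 4+d$. The manifold $N$ admits two \emph{destabilizations}: the standard inclusion of the stabilization piece $V_{p,q}$ coming from $N=M\#(S^{p}\times S^{q})$, and the one coming from $M'\#(S^{p}\times S^{q})$ after transporting it along the given diffeomorphism $M\#(S^{p}\times S^{q})\cong M'\#(S^{p}\times S^{q})$ relative to $P$. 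In the language of the resolution these are two $0$-simplices $\Delta_{M},\Delta_{M'}$ of the semi-simplicial space $K_{\bullet}(N)$ of $V_{p,q}$-destabilizations of $N$, with the associated destabilized manifolds $N_{\Delta_{M}}$ and $N_{\Delta_{M'}}$ diffeomorphic, relative to $P$, to $M$ and $M'$ respectively.

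Next, the connectivity estimate for the resolution — the place where the parameter $d=d(\pi_{q}(S^{p}))$, and hence the shift $-3-d$, enters, via the surgery arguments used to kill classes in $\pi_{q}(S^{p})$ — specializes, under $r_{p,q}(N)\geq 4+d$, to the statement that $|K_{\bullet}(N)|$ is $0$-connected. Because the diffeomorphism type relative to $P$ of the destabilization $N_{\Delta}$ is locally constant in the $0$-simplex $\Delta$, it then suffices to establish the geometric lemma that, whenever $\Delta_{0}$ and $\Delta_{1}$ span an edge of $K_{\bullet}(N)$, one has $N_{\Delta_{0}}\cong N_{\Delta_{1}}$ relative to $P$: a path in $|K_{\bullet}(N)|$ from $\Delta_{M}$ to $\Delta_{M'}$ may be pushed into the $1$-skeleton, and local constancy together with the lemma then propagates the diffeomorphism type along it. For the lemma itself, an edge encodes that $\Delta_{0}$ and $\Delta_{1}$ are disjointly embedded, so $\Delta_{1}$ survives in $N_{\Delta_{0}}$ and conversely; destabilizing along the surviving piece produces $N_{\Delta_{0},\Delta_{1}}$ and $N_{\Delta_{1},\Delta_{0}}$, which are both obtained from $N$ by excising two disjoint copies of $W^{1}_{p,q}$ and filling in two disjoint discs, hence are diffeomorphic relative to $P$ irrespective of the order. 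On the other hand $N$, and so every manifold occurring here, is connected, and $S^{p}\times S^{q}$ admits an orientation-reversing self-diffeomorphism (reflect a sphere factor), so connected sum at an interior point is well-defined up to diffeomorphism relative to $P$; thus re-attaching the excised piece gives $N_{\Delta_{i}}\cong N_{\Delta_{i},\Delta_{1-i}}\#(S^{p}\times S^{q})$ for $i=0,1$, and combining the two identifications yields $N_{\Delta_{0}}\cong N_{\Delta_{1}}$ relative to $P$.

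Iterating the lemma along the zig-zag of edges connecting $\Delta_{M}$ to $\Delta_{M'}$ gives $M\cong N_{\Delta_{M}}\cong N_{\Delta_{M'}}\cong M'$ relative to $P$, which is the assertion; the hypothesis $r_{p,q}(M)\geq 3+d$ is used precisely to guarantee the $0$-connectivity of $|K_{\bullet}(N)|$. I expect essentially all of the difficulty to be already absorbed into the connectivity estimate for the resolution (the surgery-theoretic heart of the paper, responsible for the $d$-shift); the passage from that estimate to cancellation is soft, and the only genuine care needed is in pinning down the definition of $K_{\bullet}(N)$, the meaning of an edge, and the ``destabilize-then-restabilize is the identity up to diffeomorphism relative to $P$'' step in the edge lemma, all while keeping track of the relative-$P$ condition.
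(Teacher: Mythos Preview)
Your proposal is correct and follows essentially the same route as the paper: set $N=M\#(S^{p}\times S^{q})$, use the main connectivity theorem to get $|K(N)_{p,q}|$ path-connected from $r_{p,q}(N)\geq 4+d$, and then run the edge-by-edge ``destabilize, double-destabilize, restabilize'' argument to transport the rel-$P$ diffeomorphism type along a path. The paper packages your edge lemma as Proposition~\ref{proposition: cancelation} (a direct generalization of \cite[Corollary~4.5]{GRW 12}) and works with the simplicial complex $K(N)_{p,q}$ rather than the semi-simplicial space, but this is cosmetic.
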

\begin{remark}
The above Theorem is also similar to the earlier cancelation result of \cite[Theorem D]{Kr 99} for manifolds of dimension $2n \geq 6$ with respect to connected sums with $S^{n}\times S^{n}$. 
However, Theorem \ref{theorem: cancelation} is proven in a way similar to \cite[Corollary 6.3]{GRW 14} and does not directly use the techniques of \cite{Kr 99}.
\end{remark}

\subsection{Ideas behind the proof}
Our methods are similar to those used in \cite{GRW 12} and \cite{GRW 14}. For positive integers $p$ and $q$ with $p\leq q$ and $p+q = m$, we
construct a highly connected simplicial complex
$K(M)_{p,q}$ which admits an action of the
topological group $\Diff^{\partial}(M)$. Let $W_{p,q}$ denote the manifold $W^{1}_{p,q}$. 
Roughly, the
$l$-simplices of $K(M)_{p,q}$ are given by sets of
$(l+1)$-many pairwise disjoint embeddings
$W_{p, q}  \hookrightarrow M,$
with a pre-prescribed boundary condition. This simplicial complex
is similar to the complexes constructed in \cite{GRW 12} and \cite{GRW 14} and in fact, in the case that $p = q$ it is the same simplicial complex. 
Furthermore we use many of the simplicial techniques developed in those papers to study $K(M)_{p,q}$.
The majority of the technical mathematical work of this paper is involved in proving that if the inequalities of (\ref{eq: connectivity condition}) are satisfied, then the geometric realization $|K(M)_{p,q}|$ is $\frac{1}{2}[r_{p,q}(M) - 4 - d(\pi_{q}(S^{p}))]$-connected. This is established in Theorem \ref{thm: high connectivity of K(M)} and is proven in Section \ref{section: high connectivity of K} using techniques developed over the course of Sections \ref{section: simplicial complexes} through \ref{section: connectivity of L}. The $7$th and final section shows how Theorem \ref{thm: high connectivity of K(M)} implies Theorem \ref{thm: Main Theorem 1}.

To prove that $K(M)_{p,q}$ is highly connected when (\ref{eq: connectivity condition}) is satisfied, we compare $K(M)_{p,q}$ to an auxiliary complex constructed out of algebraic data associated to the manifold $M$, based on certain structures and invariants developed in \cite{W 63} by C.T.C. Wall. These structures and invariants from \cite{W 63} that we use to define the auxiliary complex are as follows: 
\begin{itemize} \itemsep8pt
\item A bilinear map
$\tau_{p,q}: \pi_{p}(M)\otimes\pi_{q}(S^{p}) \longrightarrow \pi_{q}(M)$
defined by $([\varphi], [f]) \; \mapsto \; [\varphi\circ f].$

\item Bilinear pairings,
$
\lambda_{p, q}: \pi_{p}(M)\!\otimes\!\pi_{q}(M) \longrightarrow \Z$ and
$\mu_{q}: \pi_{q}(M)\!\otimes\!\pi_{q}(M) \longrightarrow \pi_{q}(S^{p})$
where $\mu_{q}$ is $(-1)^{q}$-symmetric.

\item Functions $\alpha_{i}: \pi_{i}(M) \longrightarrow \pi_{i-1}(SO_{m-i})$ for $i = p, q,$
which are defined by sending an element $x \in \pi_{i}(M)$ to the class in $\pi_{i-1}(SO_{m-i})$ that classifies the normal bundle of an embedding which represents $x$. (If (\ref{eq: connectivity condition}) is satisfied, then by \cite[Proposition 1 and Lemma 1]{W 63} any such class $x \in \pi_{i}(M)$ is represented by a smooth embedding, unique up to regular homotopy).  
\end{itemize}

We consider the algebraic structure given by the data of the pair of homotopy groups $\pi_{p}(M)$, $\pi_{q}(M)$ equipped with the maps,
$\tau_{p,q}, \; \lambda_{p,q}, \; \mu_{q}, \; \alpha_{p}, \; \alpha_{q}$
given above. We call this algebraic structure the \textit{Wall form of degree $(p,q)$} associated to $M$ and denote it by $\mathcal{W}_{p,q}(M)$. 
We construct a simplicial complex $L(\mathcal{W}_{p,q}(M))$ whose $l$-simplices are given by sets of $(l+1)$-many
pairwise orthogonal (with respect to both $\lambda_{p,q}$ and $\mu_{q}$), algebraic embeddings $\mathcal{W}_{p,q}(W_{p,q}) \rightarrow \mathcal{W}_{p,q}(M),$
mimicking embeddings of the
  manifolds $W_{p,q} \hookrightarrow M$. 
 
  In Section \ref{section: connectivity of L} we prove that 
   that $|L(\mathcal{W}_{p,q}(M))|$ is $\frac{1}{2}[r_{p,q}(M) - 4 - d(\pi_{q}(S^{p}))]$-connected and in Section \ref{section: high connectivity of K} we prove that $|K(M)_{p,q}|$ is  $\frac{1}{2}[r_{p,q}(M) - 4 - d(\pi_{q}(S^{p}))]$-connected by comparing the complex $K(M)_{p,q}$ to $L(\mathcal{W}_{p,q}(M))$. 
   The cancelation result, Theorem \ref{theorem: cancelation} follows from Proposition \ref{proposition: cancelation} as a consequence of the fact that $|L(\mathcal{W}_{p,q}(M))|$ is path-connected when $r_{p,q}(M) \geq 4 - d$. 

\subsection{Acknowledgements} The author would like to thank Boris Botvinnik for suggesting this particular problem and for numerous helpful discussions on the subject of this paper. The author is also thankful to Allen Hatcher for some helpful remarks regarding the exposition and to the anonymous referee for very helpful critical comments and suggestions regarding the mathematics.

\section{Simplicial Complexes} \label{section: simplicial complexes}
\subsection{The Complex of Embedded Handles} \label{subsection: embedded handles}
For what follows, let $p$ and $q$ be positive integers with $p \leq q$. Recall the manifolds, 
$W^{g}_{p,q} = (S^{p}\times S^{q})^{\# g}\setminus\text{Int}(D^{p+q}).$ 
We let $W_{p,q}$ denote the manifold $W^{1}_{p,q}$.
We construct a simplicial complex similar to the one
from \cite[Definition 4.1]{GRW 12} (in fact, in the case that $p = q$ it is exactly the same complex).  For the construction we will need a slight modification of the manifolds $W_{p,q}$. Fix an oriented embedding,
$$\beta: \{1\}\times D^{p+q-1} \longrightarrow \partial W_{p,q}.$$
We define $\bar{W}_{p,q}$ to be the manifold obtained from $W_{p,q}$ by attaching $[0, 1]\times D^{p+q-1}$ to $W_{p,q}$ along the embedding $\beta$, i.e.\ $\bar{W}_{p,q} := W_{p,q}\cup_{\beta}(D^{p+q-1}\times[0,1]).$

We construct a \textit{core} $C_{p,q}
\subset \bar{W}_{p,q}$ as follows. Choose a base point $(a_{0}, b_{0}) \in S^{p}\times S^{q}$ such that:
\begin{enumerate}
\item[(a)] the subspace $(S^{p}\times \{b_{0}\})\cup (\{a_{0}\}\times S^{q}) \subset S^{p}\times S^{q}$ is contained in $W_{p,q} \subset S^{p}\times S^{q}$,
\item[(b)] the pair $(a_{0}, -b_{0}) \in S^{p}\times S^{q}$ is contained in $W_{p,q}$. 
\end{enumerate}
We then choose an embedded path $\gamma$ in $\bar{W}_{p,q} = W_{
  p,q}\cup_{\beta}(D^{p+q-1}\times[0,1])$ from the point
$(a_{0}, -b_{0})$ to $(0,0) \in [0,1]\times D^{p+q-1}$ whose interior
does not intersect
$
(S^{p}\times \{a_{0}\})\cup (\{b_{0}\}\times
S^{q})
$
and whose image agrees with $[0,1]\times \{0\} 
\subset [0,1]\times D^{p+q-1}$
inside $[0,1]\times D^{p+q-1}$.

We then define $C_{p,q}$ to be the subspace of $\bar{W}_{p,q}$
given by
$$
C_{p,q} := (S^{p}\times \{a_{0}\})\cup (\{b_{0}\}\times S^{q})\cup \gamma([0,1])\cup (\{0\}\times D^{p+q-1})
\; \subset \; \bar{W}_{p,q}.
$$
It is immediate that $C_{p,q}$ is homotopy equivalent to $S^{p}\vee S^{q}$ and that $C_{p,q}$ is a deformation retract of $\bar{W}_{p,q}$. 

Now let $M$ be a compact manifold of dimension $m$ with non-empty boundary. 
Choose an embedding of a coordinate patch,
$a: [0,1)\times \R^{m-1} \longrightarrow M$
such that $a^{-1}(\partial M) =  \{0\}\times\R^{m-1}$.  For each pair of positive integers $p \leq q$ with $p+q = m$,
we define a simplicial complex,
  $K(M, a)_{p,q}$. 
\begin{defn} \label{defn: the embedding complex} Let $M$ and $a: [0,1)\times\R^{m-1} \longrightarrow M$ be as above. The simplicial complex $K(M, a)_{p,q}$ is defined as follows:
\begin{enumerate} 
\item[(i)] 
A vertex in $K(M, a)_{p,q}$ is defined to be a pair $(t, \phi)$, where $t \in \R$ and 
$\phi: \bar{W}_{p,q} \rightarrow M$ is an embedding for which there exists $\epsilon > 0$ such that for
$(s, z) \in [0, \epsilon)\times D^{m-1} \; \subset \; \bar{W}_{p,q}$, \; 
the equality $\phi(s, z) = a(s, z + te_{1})$ is satisfied,
where $e_{1} \in \R^{m-1}$ denotes the first basis vector. 
\item[(ii)] A set of vertices $\{(\phi_{0}, t_{0}), \dots, (\phi_{l}, t_{l})\}$ forms an $l$-simplex 
if 
$t_{i} \neq t_{j}$ and \\
$\phi_{i}(C_{p,q}) \cap \phi_{j}(C_{p,q}) = \emptyset$ whenever $i \neq j$. 
\end{enumerate}
\end{defn}
\begin{remark}
The definition of the complex $K(M, a)_{p,q}$ depends on the choice of
$a: [0,1)\times\R^{m-1} \rightarrow M.$
However this embedding never plays a role and all of our results regarding $K(M, a)_{p,q}$ will hold for all choices of $a$.  
So, we
will suppress $a$ from the notation and let 
$K(M)_{p,q}$ denote the complex $K(M, a)_{p,q}.$ 
\end{remark}

With $M$ still as above, suppose now that $p$ and $q$ are such that $p + q = m$ and 
$$p < q < 2p - 2 \quad \text{and} \quad q - p + 1 < \kappa(M)
$$
as in (\ref{eq: connectivity condition}) (recall $\kappa(M)$ is the connectivity of $M$). 
The majority of the technical work of this paper is geared toward proving the following result. 
\begin{theorem} \label{thm: high connectivity of K(M)} Let $M$, $p$, and $q$ be as above and let $d$ denote the integer
$d(\pi_{q}(S^{p}))$, the generating set length of $\pi_{q}(S^{p})$. Let $g \geq 0$ be such that $r_{p,q}(M) \geq g$. Then the geometric realization $|K(M)_{p,q}|$ is $\frac{1}{2}(g-4-d)$-connected.
\end{theorem}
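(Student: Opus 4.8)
The plan is to prove Theorem~\ref{thm: high connectivity of K(M)} by transferring the high connectivity of the algebraic complex $L(\mathcal{W}_{p,q}(M))$ to the geometric complex $K(M)_{p,q}$. First I would fix $M$, $p$, $q$ satisfying (\ref{eq: connectivity condition}) and assume $r_{p,q}(M) \geq g$, so that by the (yet-to-be-established) connectivity theorem for $L$, proven in Section~\ref{section: connectivity of L}, the realization $|L(\mathcal{W}_{p,q}(M))|$ is $\tfrac12[r_{p,q}(M) - 4 - d]$-connected, hence $\tfrac12(g-4-d)$-connected. The heart of the argument is then to build a simplicial map
$$\rho: K(M)_{p,q} \longrightarrow L(\mathcal{W}_{p,q}(M))$$
sending an embedding $\phi: \bar W_{p,q} \hookrightarrow M$ to the induced ``algebraic embedding'' $\mathcal{W}_{p,q}(W_{p,q}) \to \mathcal{W}_{p,q}(M)$ obtained from $\phi_{*}$ on homotopy groups together with the compatibility of $\lambda_{p,q}$, $\mu_{q}$, $\tau_{p,q}$, $\alpha_{p}$, $\alpha_{q}$ under embedding (these are natural by the constructions from \cite{W 63} recalled in the introduction). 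Disjointness of the cores $\phi_i(C_{p,q})$ should force $\lambda$- and $\mu$-orthogonality of the images, so $\rho$ indeed sends simplices to simplices.

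Next I would run the standard ``connectivity comparison'' machinery (the simplicial techniques of \cite{GRW 12}, \cite{GRW 14}): to show $|K(M)_{p,q}|$ is $n$-connected for $n = \tfrac12(g-4-d)$, it suffices to show (a) $\rho$ is surjective on the relevant skeleta up to isotopy, i.e.\ every simplex of $L$ lifts, after possibly shrinking, to a simplex of $K$; and (b) $\rho$ has ``highly connected fibres'' in the appropriate sense — more precisely, that given a map $S^k \to |K(M)_{p,q}|$ with $k \leq n$ whose image in $|L|$ bounds a disk, one can fill in the disk in $K$ itself. The realization step (a) is exactly where \cite[Proposition 1 and Lemma 1]{W 63} enter: each algebraic generator of $\mathcal{W}_{p,q}(W_{p,q})$ inside $\mathcal{W}_{p,q}(M)$ is realized by an embedded sphere (unique up to regular homotopy since $q - p + 1 < \kappa(M)$), the two spheres $S^p$ and $S^q$ can be joined and thickened to an embedded copy of $\bar W_{p,q}$ because $\mu_q$ controls the self-intersection/framing data, and disjointness of cores is arranged by a general position argument using $2(p) > m$ is \emph{false} — so one must instead use that $\lambda$-orthogonality plus $p < q$ gives $p + p > \dim$? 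No: here one genuinely needs the Whitney trick / Wall's embedding results in the metastable range, which is why the hypothesis $q < 2p-2$ is imposed. For step (b), filling disks, one uses induction on the number of ``bad'' intersections, replacing a family of embeddings by an isotopic one with fewer intersections, again via Whitney-type moves whose obstructions live in exactly the homotopy-theoretic data recorded by $\mathcal{W}_{p,q}$.

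The main obstacle, and the step I expect to consume most of the technical work, is step (b): showing that the comparison map $\rho$ does not lose connectivity, i.e.\ that the ``discrepancy'' between geometric embeddings and their algebraic shadows is itself highly connected. Concretely, one must show that the subcomplex of $K(M)_{p,q}$ lying over a fixed simplex of $L(\mathcal{W}_{p,q}(M))$ — the embeddings inducing a prescribed algebraic embedding — is contractible, or at least $(n - \dim)$-connected, which amounts to a parametrized version of ``an algebraic embedding that is realized geometrically is realized essentially uniquely.'' This requires the uniqueness-up-to-isotopy refinement of Wall's realization (again using $q-p+1 < \kappa(M)$ and $q < 2p-2$ to stay in the metastable range where regular homotopy implies isotopy after stabilization), combined with a careful bookkeeping of the framings/normal data encoded by $\alpha_p$, $\alpha_q$, $\mu_q$. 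Once these pieces are in place, the spectral-sequence or colimit argument comparing the two complexes (as in \cite[\S4--5]{GRW 14}) yields that $|K(M)_{p,q}|$ inherits the $\tfrac12(g-4-d)$-connectivity of $|L(\mathcal{W}_{p,q}(M))|$, completing the proof.
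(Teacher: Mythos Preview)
Your comparison map $\rho$ (the paper calls it $\Psi$) and the overall strategy of pushing a sphere into $|L(\mathcal{W}_{p,q}(M))|$, bounding a disk there, and lifting the disk back to $K(M)_{p,q}$ are correct. But your analysis of ``step (b)'' is the wrong picture, and it would lead you into unnecessary and possibly intractable work.

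The paper does \emph{not} study the fibres of $\Psi$, and it does \emph{not} need any ``parametrized uniqueness'' statement of the form ``the subcomplex of $K(M)_{p,q}$ over a fixed simplex of $L$ is contractible.'' In fact such a fibre is not well behaved: two embeddings $\phi_0,\phi_1:\bar W_{p,q}\to M$ inducing the \emph{same} morphism $h$ can never be adjacent in $K(M)_{p,q}$, since $\lambda_{p,q}(h_{-}(a),h_{+}(b))=1$ forces $\phi_0(S^p)$ to meet $\phi_1(S^q)$. So the fibre over a vertex is a discrete set with no edges, and your proposed fibrewise argument cannot work.

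What actually drives the proof is the $lCM$ half of Theorem~\ref{thm: high connectivity}: $lCM(L(\mathcal{W}_{p,q}(M)))\ge \tfrac12(g-1-d)$. This is exactly the input to Theorem~\ref{thm: cohen mac trick} (the Hatcher--Wahl coloring lemma), which lets you upgrade the null-homotopy $F:I^{k+1}\to |L(\mathcal{W}_{p,q}(M))|$ to be \emph{simplexwise injective on the interior}. Once $F$ is simplexwise injective, any two adjacent interior vertices map to genuinely orthogonal morphisms in $L$, i.e.\ $\lambda_{p,q}=0$ and $\mu_q=0$ between them. That algebraic orthogonality is precisely the obstruction data for the classical Whitney trick (for $S^p$ versus $S^q$) and Wells' higher Whitney trick, Proposition~\ref{prop: Generalized Whitney Trick} (for $S^q$ versus $S^q$). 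So you lift interior vertices one at a time by \emph{existence} alone: realize $h_{-}(a),h_{+}(b)$ by embedded spheres with trivial normal bundles (since $\alpha_{\pm}=0$), Whitney-trick them to meet transversally in a single point and to be disjoint from the cores of all previously lifted neighbours, plumb to get $W_{p,q}$, and attach a collar to reach $\partial M$. No uniqueness, no isotopy classification, and no fibre analysis is required; the missing ingredient in your outline is the simplexwise-injectivity step via Theorem~\ref{thm: cohen mac trick}.
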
 

The next proposition is a generalization of \cite[Corollary 4.5]{GRW 12} and is proven in the same way. 
Combining Theorem \ref{thm: high connectivity of K(M)} together with the next proposition implies the cancelation result Theorem \ref{theorem: cancelation} stated in the introduction. 
\begin{proposition} \label{proposition: cancelation} 
Let $P$ be a closed manifold of dimension $m-1$ and let $M$ and $M'$ be 
compact manifolds of dimension $m$ equipped with a specified identification, $\partial M = \partial M' = P$.
Let $p \leq q$ be positive integers such that $p+q = m$. 
Suppose that there is a diffeomorphism $\varphi: M\#(S^{p}\times S^{q}) \stackrel{\cong} \longrightarrow M'\#(S^{p}\times S^{q})$ relative $P$. 
If $|K(M\#(S^{p}\times S^{q}))_{p,q}|$ is connected, then there is a diffeomorphism $M \cong M'$ relative $P$. 
\end{proposition}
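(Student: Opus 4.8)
The plan is to mimic the argument of \cite[Corollary 4.5]{GRW 12}, extracting the diffeomorphism $M \cong M'$ rel $P$ from the connectivity (here, just $0$-connectivity, i.e.\ path-connectedness) of $|K(M\#(S^{p}\times S^{q}))_{p,q}|$. The key geometric input is the following: a vertex $(t,\phi)$ of $K(N)_{p,q}$, where $N := M\#(S^{p}\times S^{q})$, specifies an embedding $\phi\colon \bar{W}_{p,q}\hookrightarrow N$ with the prescribed boundary behavior near the coordinate patch $a$. First I would observe that cutting $N$ along (a thickening of) the image of such an embedded $\bar W_{p,q}$ — more precisely, excising an open tubular neighborhood of $\phi(W_{p,q})$ that meets $\partial N$ in a disk — produces a manifold diffeomorphic rel $P$ to $M$ itself, since removing an embedded copy of $W_{p,q}=S^{p}\times S^{q}\setminus\mathrm{Int}(D^{m})$ (boundary-connect-summed onto a collar) undoes one connected sum with $S^{p}\times S^{q}$. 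Thus every vertex of $K(N)_{p,q}$ gives rise to a manifold, rel $P$, obtained by ``removing a handle'', and I must show this manifold is independent of the vertex.

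Next I would set up the comparison across an edge: if $(t_{0},\phi_{0})$ and $(t_{1},\phi_{1})$ span a $1$-simplex, then $\phi_{0}(C_{p,q})\cap\phi_{1}(C_{p,q})=\emptyset$, and after an isotopy (shrinking tubular neighborhoods toward the cores $C_{p,q}$, which are deformation retracts of $\bar W_{p,q}$) we may assume $\phi_{0}(\bar W_{p,q})\cap\phi_{1}(\bar W_{p,q})$ is contained in the common collar of $\partial N$ where both agree with $a$. Removing the first handle $\phi_{0}(W_{p,q})$ from $N$ yields a copy of $M$ containing an embedded copy of $W_{p,q}$ coming from $\phi_{1}$; removing the second handle yields a copy of $M'$; and since the two removals are performed in disjoint regions they commute, showing that the ``$M$'' extracted from $(t_{0},\phi_{0})$ and the ``$M'$'' extracted from $(t_{1},\phi_{1})$ are related by removing a handle from a common manifold, hence — by the same one-step cancelation — are diffeomorphic rel $P$. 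Applying the diffeomorphism $\varphi\colon M\#(S^{p}\times S^{q})\xrightarrow{\cong}M'\#(S^{p}\times S^{q})$ rel $P$ transports the ``standard'' vertex coming from the $S^{p}\times S^{q}$ summand of $M'\#(S^{p}\times S^{q})$ to some vertex of $K(N)_{p,q}$ whose associated cut-open manifold is $M'$ rel $P$; the standard vertex of $N=M\#(S^{p}\times S^{q})$ itself has associated manifold $M$ rel $P$. Since $|K(N)_{p,q}|$ is path-connected, these two vertices are joined by an edge-path, and propagating the diffeomorphism along each edge yields $M\cong M'$ rel $P$.

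The main obstacle, and the point that needs care rather than just invocation of \cite{GRW 12}, is the edge step: making precise that two disjoint embedded handles can be removed ``in either order'' so that the resulting manifolds are canonically identified rel $P$. This requires (i) upgrading the disjointness of the cores $C_{p,q}$ to disjointness (or controlled intersection inside the collar) of tubular neighborhoods of the full $\bar W_{p,q}$'s, using that each $\bar W_{p,q}$ deformation retracts onto its core together with a general-position/isotopy-extension argument, and (ii) checking that removing the regular neighborhood of one handle does not disturb the other handle or the identification with $P$ near the boundary. Once disjoint tubular neighborhoods are arranged, the commutativity of the two excisions is formal, and the rest of the argument is a connectivity (path-lifting) bookkeeping identical to that of \cite[Corollary 4.5]{GRW 12}; I would remark that, although we only use path-connectedness here, the very same vertex-wise and edge-wise constructions are what feed into the higher-connectivity applications elsewhere in the paper.
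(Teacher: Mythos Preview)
Your proposal is correct and matches what the paper intends; the paper itself gives no proof, merely citing \cite[Corollary~4.5]{GRW 12} and saying Proposition~\ref{proposition: cancelation} ``is proven in the same way''. Your outline is a faithful unpacking of that argument.

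One remark on packaging: the route actually taken in \cite{GRW 12} goes through their Corollary~4.4, which is a \emph{transitivity} statement---connectedness of $|K(N)_{p,q}|$ implies that $\Diff^{\partial}(N)$ acts transitively on isotopy classes of vertices. Cancelation then follows in one step: the standard vertex (with complement $M$) and the $\varphi$-pullback of the standard vertex (with complement $M'$) are related by some $\psi\in\Diff^{\partial}(N)$, and $\psi$ restricts to a diffeomorphism of the complements rel $P$. Your version instead shows directly that the rel-$P$ diffeomorphism type of the complement is constant along edges, via the ``remove both handles and add one back'' trick. The two are equivalent, and in fact your edge step is precisely what underlies the transitivity proof in \cite{GRW 12}; but phrasing it as transitivity has the advantage that the diffeomorphism $M\cong M'$ is produced in one stroke as a restriction of an ambient diffeomorphism, rather than assembled inductively along a path. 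Either way, the point you correctly flag as the main obstacle---upgrading disjointness of the cores $C_{p,q}$ to disjointness of the full images, using that $C_{p,q}$ is a deformation retract of $\bar{W}_{p,q}$ together with isotopy extension---is exactly the content that needs care, and is handled in \cite{GRW 12}.

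A small expositional caution: in your first paragraph you write that excising a general embedded $\bar{W}_{p,q}$ from $N$ ``produces a manifold diffeomorphic rel $P$ to $M$ itself''. Taken literally this is the conclusion, not an input; your final sentence of that paragraph (``I must show this manifold is independent of the vertex'') makes clear you understand this, but the earlier phrasing invites a circular reading.
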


\subsection{Some Simplicial Techniques}
We will need to use some general
techniques and results pertaining to simplicial complexes. In this section we state the relevant results
needed. Recall that for a simplicial complex $X$, the \textit{link} of a
simplex $\sigma < X$ is defined to be the sub-simplicial complex
of $X$ consisting of all simplices that are adjacent to $\sigma$
but which do not occur as a face of $\sigma$.
We denote the link of the simplex $\sigma$ by $Lk_{X}(\sigma).$

\begin{defn} A simplicial complex $X$ is said to be
\textit{weakly Cohen-Macaulay} of dimension $n$ if it is
$n-1$-connected and the link of any $l$-simplex is
$(n-l-2)$-connected. In this case we write $\omega CM(X) \geq
n$. The complex $X$ is said to be \textit{locally weakly
Cohen-Macaulay} of dimension $n$ if the link of any simplex is
$(n - l- 2)$-connected (but no global connectivity is required on
$X$ itself). In this case we shall write $lCM(X) \geq n$.
\end{defn}

We will need to use the important following two results from \cite{GRW 14}. Theorem \ref{thm: cohen mac trick} is a generalization of the ``Coloring Lemma'' of Hatcher and Wahl from \cite[Lemma 3.1]{HW 10}.
\begin{theorem} \label{thm: cohen mac trick}
Let $X$ be a
simplicial complex with $lCM(X) \geq n$, $f: \partial I^{n}
\rightarrow |X|$ be a map which is simplicial with respect to
some PL triangulation of $\partial I^{n}$, and $h: I^{n}
\rightarrow |X|$ be a null-homotopy of $f$. Then the triangulation
extends to a PL triangulation of $I^{n}$, and $h$ is homotopic
relative to $\partial I^{n}$ to a simplicial map $g: I^{n}
\rightarrow |X|$ with the property that $g$ is simplex-wise injective on the interior, $I^{n}\setminus\partial
I^{n}$.
\end{theorem}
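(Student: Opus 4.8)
The plan is to run the argument that is by now standard for results of this kind (indeed this statement is essentially \cite[Lemma 3.1]{HW 10} together with its refinement in \cite{GRW 14}): first replace $h$ by a simplicial map using relative simplicial approximation, and then remove, step by step, the simplices on which the map fails to be injective, feeding on the connectivity of links that is guaranteed by the hypothesis $lCM(X) \ge n$. Concretely, in the first step I would extend the given PL triangulation of $\partial I^{n}$ to a PL triangulation of $I^{n}$ and apply the relative version of the simplicial approximation theorem: after passing to a subdivision of $I^{n}$ which does not subdivide $\partial I^{n}$, the nullhomotopy $h$ is homotopic relative to $\partial I^{n}$ to a simplicial map $g\colon I^{n} \to |X|$ with $g|_{\partial I^{n}} = f$. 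It then suffices to homotope $g$, relative to $\partial I^{n}$ and through simplicial maps defined on ever finer subdivisions, into a map that is simplex-wise injective on the interior.

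The second step is to locate the obstruction. Call a simplex $\sigma$ of the current triangulation \emph{bad} if $g$ is not injective on its set of vertices. Since injectivity on vertices passes to faces, the set of bad simplices is closed under taking cofaces; moreover, in a PL triangulation of the manifold $I^{n}$ every simplex is a face of some $n$-simplex. Consequently $g$ is simplex-wise injective on the interior as soon as every $n$-simplex whose closed realization misses $\partial I^{n}$ is non-bad, so the whole problem reduces to eliminating these bad top-dimensional simplices (together with any new bad $n$-simplices introduced along the way by subdivision).

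The third step removes a bad top-dimensional simplex by a modification supported near its closed star. Given such a bad $n$-simplex $\sigma$, set $\tau := g(\sigma)$, a simplex of $X$ with $k := \dim \tau \le n-1$. The restriction $g|_{\partial\sigma}$ has image in $|\tau|$, hence in the closed star of $\tau$, that is, in $|\tau * Lk_{X}(\tau)|$. The hypothesis $lCM(X)\ge n$ says that $Lk_{X}(\tau)$ is $(n-k-2)$-connected, and this is exactly the connectivity one needs in order to subdivide $\sigma$ relative to $\partial\sigma$ and redefine $g$ on it, using a suitable simplex of $Lk_{X}(\tau)$, so that near $\sigma$ every top-dimensional simplex either becomes non-bad or has image of strictly larger dimension than before, while no new bad simplex is created outside the closed star of $\sigma$. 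Choosing at each stage a bad $n$-simplex whose image has minimal dimension, this move strictly decreases the corresponding complexity measure, so after finitely many steps no bad $n$-simplex remains; by the second step the resulting simplicial map is simplex-wise injective on the interior and is homotopic to $h$ relative to $\partial I^{n}$.

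I expect the third step to be the main obstacle. The delicate point is to organize the local re-mappings so that they provably reduce a well-founded complexity measure, keep the map fixed on $\partial I^{n}$, and do not destroy injectivity on neighbouring simplices; this bookkeeping is precisely the combinatorial heart of the Coloring Lemma and of its generalization in \cite{GRW 14}. By contrast, the Cohen--Macaulay hypothesis enters only through the connectivity estimate for $Lk_{X}(\tau)$, which is what makes the required local extensions possible in the right range.
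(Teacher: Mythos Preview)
The paper does not prove this theorem at all: it is quoted as one of two results imported from \cite{GRW 14} (and identified there as a generalization of the Coloring Lemma \cite[Lemma 3.1]{HW 10}), and no argument is given. Your outline is essentially the argument used in those references---relative simplicial approximation, followed by iterated local modification of bad top-simplices using the link-connectivity bound $lCM(X)\ge n$---so there is nothing to compare against in this paper itself.

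One small point worth tightening in your write-up: the reduction in your second step is not quite as stated. A simplex whose interior lies in $I^{n}\setminus\partial I^{n}$ can still be a face of an $n$-simplex meeting $\partial I^{n}$, so ``every interior $n$-simplex is non-bad'' does not immediately give simplex-wise injectivity on the interior. In \cite{GRW 14} the correct bookkeeping is that one eliminates bad simplices of \emph{minimal} dimension among those not contained in $\partial I^{n}$, and the local move (star the bad simplex and cone to a new vertex mapped into $Lk_{X}(\tau)$) strictly increases this minimal dimension; the Cohen--Macaulay hypothesis enters exactly to guarantee the required element of the link. With that adjustment your plan matches the cited proof.
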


\begin{proposition} \label{prop: high connected map} Let $X$ be
a simplicial complex, and $Y \subset X$ be a full subcomplex.
Let $n$ be an integer with the property that for each
$l$-simplex $\sigma < X$, the complex $Y\cap Lk_{X}(\sigma)$ is
$(n-l-1)$-connected. Then the inclusion $|Y| \hookrightarrow
|X|$ is $n$-connected.
\end{proposition}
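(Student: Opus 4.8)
The plan is to prove this by a standard deformation argument: given a map $f\colon S^k \to |X|$ with $k \le n$, I will homotope it into $|Y|$, and similarly show that any nullhomotopy of a map $S^k \to |Y|$ with $k \le n-1$ can be pushed into $|Y|$. First I would simplicially approximate $f$, so that $f$ is a simplicial map from some PL triangulation of $S^k$ (or of $I^k$, working relative to the boundary for the relative statement) to $X$. The strategy is then to remove the simplices of the domain whose image does not already lie in $Y$, proceeding by downward induction on the dimension of such ``bad'' simplices — or equivalently, by induction on the number of vertices of the domain that map to $X \setminus Y$, shrinking a star neighborhood of one such vertex at a time.

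The key step is the local pushing move. Suppose $w$ is a vertex of the triangulated domain with $f(w) \notin Y$ (here I use that $Y$ is \emph{full}: a simplex of $X$ lies in $Y$ iff all its vertices do, so the ``bad'' vertices are exactly the obstruction). Let $\sigma = f(\mathrm{Lk}(w))$ — more precisely, I want to look at the images of the simplices of the domain link of $w$; after a preliminary subdivision one arranges that $f$ carries the star of $w$ into the star of the simplex $\tau < X$ spanned by $f(w)$ together with its neighbors, and the relevant target for re-routing the link is $Y \cap \mathrm{Lk}_X(\tau)$ for appropriate simplices $\tau$ containing $f(w)$. The domain link of $w$ is a triangulated sphere (or ball) of dimension $k-1-l$ when $w$ sits inside an $l$-dimensional ``bad'' face, and it maps into $Y \cap \mathrm{Lk}_X(\tau)$; by hypothesis this complex is $(n - l - 1)$-connected, and since $k - 1 - l \le n - 1 - l = (n-l-1)$ we can fill the link and use the fill to redefine $f$ on the star of $w$, thereby eliminating $w$ (and not introducing any new bad vertices, since the new simplices have all their vertices in $Y$). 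Iterating over all bad vertices, and handling the relative version by doing the same thing while keeping $\partial I^k$ fixed, gives that $\pi_k(|X|,|Y|) = 0$ for $k \le n$, which is exactly $n$-connectedness of the inclusion.

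The main obstacle — and the part requiring genuine care rather than routine bookkeeping — is the simplicial/PL machinery that lets one perform the local replacement cleanly: one must choose subdivisions so that the star of a bad vertex maps into the star of a single simplex of $X$, so that its link maps into a single link complex $Y \cap \mathrm{Lk}_X(\tau)$, and one must check that the re-routing does not create new bad vertices or disturb previously-fixed parts of the domain. This is precisely the kind of argument carried out in \cite[Section 3]{HW 10} and reproved in \cite{GRW 14}; since the statement here is quoted directly from \cite{GRW 14}, the proof is a reference to that source, and I would simply cite it rather than reproduce the PL details. (Indeed, Theorem \ref{thm: cohen mac trick} above is exactly the technical engine — the ``coloring'' refinement of simplicial approximation — that powers such arguments, though for this particular proposition the plain deformation argument with fullness of $Y$ suffices.)
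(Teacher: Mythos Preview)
The paper does not prove this proposition at all; it is stated as one of two results imported from \cite{GRW 14} (see the sentence immediately preceding Theorem \ref{thm: cohen mac trick}). Your proposal correctly identifies this and lands on the same conclusion---cite \cite{GRW 14}---while also giving a serviceable sketch of the standard bad-vertex deformation argument behind it, so your approach agrees with the paper's.
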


\section{The Algebraic Invariants} \label{Section: The Algebraic Invariants}

\subsection{Intersection Products and Vector Bundles} \label{Intersection Products and Vector Bundles}   
In this section we implement some of the 
invariants developed by C.T.C. Wall in \cite{W 63}
associated to smooth manifolds in a certain dimensional and connectivity range. 
We change notation and rephrase some of these algebraic structures from \cite{W 63}, however most of the necessary constructions and proofs can be found in \cite{W 63} and so we omit most proofs and refer the reader there for details.

For what follows, let $M$ be a compact, simply connected, oriented manifold of dimension $m$.
As in the introduction, we denote by $\kappa(M)$ the connectivity of $M$.
Since $\kappa(M) \geq 1$ by assumption, it follows that $\pi_{s}(M)$ is in bijection with the set of homotopy classes of unbased maps $S^{s} \rightarrow M$ when $s > 1$. 
We will need to represent such maps by embeddings.
Recall that a homotopy $f_{t}: S^{s} \rightarrow M$ is said to be an \textit{isotopy} if $f_{t}$ is an embedding for all $t \in [0,1]$ and is said to be a \textit{regular homotopy} if $f_{t}$ is an immersion for all $t \in [0,1]$. 
 By combining the results \cite[Proposition 1]{W 63} and \cite[Lemma 1]{W 63}
we have the following:
\begin{lemma} \label{lemma: represent by embeddings}
Suppose that $s \leq \min\{\; \tfrac{m + \kappa(M) - 2}{2}, \; \tfrac{2m - 3}{3} \; \}$. 
Then any map $S^{s} \longrightarrow M$ is homotopic to an embedding. 
Furthermore any two embeddings $S^{s} \longrightarrow M$ which are homotopic as continuous maps are regularly homotopic as immersions. 
\end{lemma}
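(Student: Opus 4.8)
The plan is to derive this lemma directly from the two cited results of Wall, \cite[Proposition 1]{W 63} and \cite[Lemma 1]{W 63}, checking that the numerical hypothesis $s \leq \min\{\tfrac{m+\kappa(M)-2}{2},\ \tfrac{2m-3}{3}\}$ is exactly what is needed to invoke them. First I would recall that since $M$ is simply connected with $\kappa(M)\geq 1$ and $s>1$, based and unbased homotopy classes of maps $S^{s}\to M$ agree, so it suffices to work with based maps and represent a fixed homotopy class. Wall's existence result says that a class in $\pi_s(M)$ is represented by an embedding provided $s$ is small enough relative to $m$ and the connectivity of $M$; the precise bound there is $2s \leq m + \kappa(M) - 2$ together with the general position/Whitney-trick requirement $3s \leq 2m - 3$ (equivalently $2(m-s) \geq s+3$, ensuring the embedding dimension $m$ exceeds twice the source dimension after the relevant handle manipulations). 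These two inequalities are precisely $s \leq \tfrac{m+\kappa(M)-2}{2}$ and $s \leq \tfrac{2m-3}{3}$, so the hypothesis of the lemma unpacks exactly into the hypotheses of Wall's proposition.

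Next, for the uniqueness-up-to-regular-homotopy statement, I would invoke \cite[Lemma 1]{W 63}: two embeddings $S^s \to M$ that are homotopic as maps are regularly homotopic as immersions, again in the stated range. The point here is the Hirsch--Smale theory of immersions combined with the fact that the normal bundle data is controlled in this range; Wall's Lemma 1 packages exactly this. I would note that the two embeddings being homotopic as maps is the only hypothesis needed for the regular homotopy conclusion, and that the numerical range is (at worst) the same as for the existence statement, so no extra assumption is required.

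The only genuine work is the bookkeeping: making sure the way Wall phrases his hypotheses (he works with $(n-1)$-connected $2n$- or $(2n+1)$-manifolds in parts of \cite{W 63}, and with more general connectivity elsewhere) matches the clean two-inequality form stated here, and confirming that the edge cases where $\tfrac{m+\kappa(M)-2}{2}$ or $\tfrac{2m-3}{3}$ is not an integer are handled correctly by the floor that is implicit in ``$s \leq$''. The main obstacle, such as it is, is purely expository: translating between Wall's conventions and the present paper's, rather than any substantive argument. Since the paper explicitly says it omits proofs that can be found in \cite{W 63} and refers the reader there, I would keep this short --- state that the lemma is the combination of \cite[Proposition 1]{W 63} and \cite[Lemma 1]{W 63} after unwinding the numerical hypothesis, and leave the details to the reference.
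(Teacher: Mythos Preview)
Your proposal is correct and matches the paper's approach exactly: the paper does not give a proof but simply states that the lemma follows by combining \cite[Proposition 1]{W 63} and \cite[Lemma 1]{W 63}, which is precisely what you propose to do with a bit of added bookkeeping on the numerical hypotheses.
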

\begin{remark} \label{remark: regular homotopy}
The result \cite[Proposition 1]{W 63} also implies that if $s \leq \min\{\; \tfrac{m + \kappa(M) - 2}{2}, \; \tfrac{2m - 4}{3} \; \}$ then any two embeddings $S^{s} \longrightarrow M$ which are homotopic as maps are actually isotopic as embeddings.
For our purposes however, uniqueness up to regular homotopy is all that will be needed. 
\end{remark}

For what follows let $p$ and $q$ be positive integers with $p+ q = m = \dim(M)$,  such that
\begin{equation} \label{eq: primary inequality}
p < q < 2p - 2 \quad \text{and} \quad q - p + 1 < \kappa(M)
\end{equation}
is satisfied. 
By Lemma \ref{lemma: represent by embeddings} every element in the groups $\pi_{p}(M)$ and $\pi_{q}(M)$ can be represented by an embedding $S^{s} \rightarrow M$ for $s = p, q$ which is unique up to regular homotopy.
We proceed to construct extra structure associated to the homotopy groups $\pi_{p}(M)$ and $\pi_{q}(M)$.

First, 
we define a bilinear map
\begin{equation} \label{eq: tau map}
\tau_{p,q}: \pi_{p}(M)\otimes\pi_{q}(S^{p}) \longrightarrow \pi_{q}(M), \quad ([f], [g]) \mapsto [f\circ g].
\end{equation}
Since $q < 2p -2$, the \textit{Freudenthal Suspension Theorem} implies that the suspension map, $\Sigma: \pi_{q-1}(S^{p-1}) \longrightarrow \pi_{q}(S^{p})$ is surjective. 
 It follows from this fact that (\ref{eq: tau map}) is indeed bilinear. 
 
 Next, we have a $(-1)^{q}$-symmetric, bilinear pairing 
\begin{equation} \label{eq: mu pairing}
\mu_{q}: \pi_{q}(M)\otimes\pi_{q}(M) \longrightarrow \pi_{q}(S^{p}).
\end{equation}
The definition of $\mu_{q}$ can be found in \cite[Page 255]{W 63} (in \cite{W 63} the map $\mu_{q}$ is actually denoted by $\lambda_{q,q}$). 
Below, in Construction \ref{construction: pontryagin thom} we review the definition of $\mu_{q}(x, y)$ for $x, y \in \pi_{q}(M)$.
\begin{Construction} \label{construction: pontryagin thom}
Let $x, y \in \pi_{q}(M)$ and let $g: S^{q} \rightarrow M$ be an embedding which represents the class $y$. 
\begin{enumerate}
\item[i.] Let $D^{q}_{-}, D^{q}_{+} \subset S^{q}$ denote the upper and lower hemisphere of $S^{q}$ respectively.
Let $U_{\pm}$ denote the submanifold $g(D^{q}_{\pm}) \subset M$ and let $V_{-}$ be a closed tubular neighborhood of $U_{-}$ in $M$ such that $U_{+}\cap V_{-} = \partial U_{+}$. 
Then let $X$ denote the manifold $M\setminus\Int(V_{-})$. 
We have $(U_{+}, \partial U_{+}) \subset (X, \partial X)$. 
\item[ii.] Since $q \leq m - 2$ and $V_{-}$ is diffeomorphic to a disk, it follows that the homomorphism $i_{*}: \pi_{q}(X) \rightarrow \pi_{q}(M)$ induced by the inclusion $i: X \hookrightarrow M$ is an isomorphism. Let $f: S^{q} \longrightarrow M$ be a map which represents the class $i_{*}^{-1}(x) \in \pi_{q}(X)$. 
\item[iii.] Denote by $N \subset X$ the normal disk-bundle of $U_{+}$ embedded in $X$ as a closed tubular neighborhood. 
Since $U_{+}$ is contractible, the normal bundle $N \rightarrow U_{+}$ is trivial. 
Let $\phi: N \stackrel{\cong} \longrightarrow U_{+}\times D^{p}$ be a trivialization that is consistent with the orientation on $N$ induced by the orientations on $X$ and on $U_{+}$. 
Since $U_{+}$ is contractible there is only one such trivialization up to homotopy. 
 \item[iv.] Let $T_{g}: X \longrightarrow D^{p}/\partial D^{p} = S^{p}$ be the \textit{Pontryagin-Thom collapse map}, defined by 
 $$T_{g}(x) = 
 \begin{cases}
 [\text{proj}_{D^{p}}(\phi(x))]  &\quad \text{if $x \in N,$}\\
 [\partial D^{p}] &\quad \text{if $x \notin N$.}
 \end{cases}
 $$
 \item[v.] Consider the composition, $S^{q} \stackrel{f} \longrightarrow X \stackrel{T_{g}} \longrightarrow S^{p}$ where recall, $f: S^{p} \rightarrow X$ is a map which represents the element $i_{*}^{-1}(x) \in \pi_{q}(X)$ chosen in step ii.
 The class $\mu_{q}(x, y)$ is then defined by setting, $\mu_{q}(x, y) := [T_{g}\circ f] \in \pi_{q}(S^{p}).$
 \end{enumerate}
 \end{Construction}
 It is immediate from the above construction that $\mu_{q}$ is linear in the first variable. 
 It is not immediate that $\mu_{q}$ is linear in the second variable or that $\mu_{q}$ is $(-1)^{q}$-symmetric, i.e. $\mu_{q}(x, y) = (-1)^{q}\cdot\mu_{q}(y, x)$. 
 The proof that $\mu_{q}$ is bilinear and $(-1)^{q}$-symmetric is covered by the corollary to \cite[Lemma 4]{W 63} on Page 256. 
 \begin{remark}
In Construction \ref{construction: pontryagin thom} the class $[T_{g}\circ f] \in \pi_{q}(S^{p})$ depends on the isotopy class of the embedding $g$ and the homotopy class of the map $f$.
 It then follows from the $(-1)^{q}$-symmetry of $\mu_{q}$ that $[T_{g}\circ f] = \mu_{q}([f], [g])$ 
 depends only on the homotopy class of $g$ and not the isotopy class. 
 It follows that $\mu_{q}$ is well defined even in the case when elements of $\pi_{q}(M)$ are represented by embeddings unique up to regular homotopy, but not necessarily unique up to isotopy. 
 \end{remark}
 
There is a bilinear pairing between $\pi_{p}(M)$ and $\pi_{q}(M)$ defined as follows. 
First denote by \\
$h_{i}: \pi_{i}(M) \longrightarrow H_{i}(M; \Z)$ the \textit{Hurewicz map}
and by $\Lambda_{p,q}: H_{p}(M; \Z)\otimes H_{q}(M; \Z) \longrightarrow M$ the \textit{homological intersection pairing} (since $M$ is oriented by assumption, the homological intersection pairing is defined). 
We then define 
\begin{equation} \label{eq: intersection pairing lambda}
\lambda_{p,q}: \pi_{p}(M)\otimes\pi_{q}(M) \longrightarrow \Z
\end{equation}
to be the bilinear map given by the formula, 
$$\lambda_{p,q}(x, y) := \Lambda_{p,q}(h_{p}(x), \; h_{q}(y)) \quad \text{for $x \in \pi_{p}(M)$ and $y \in \pi_{q}(M)$}.$$ 
It follows easily that if $f: S^{p} \rightarrow M$ and $g: S^{q} \rightarrow M$ are transversal embeddings that represent $x \in \pi_{p}(M)$ and $y \in \pi_{q}(M)$ respectively,
then $\lambda_{p,q}(x, y)$ is equal to the (oriented) \textit{algebraic intersection number} associated to $f(S^{p})\cap g(S^{q})$. 

The following proposition describes the relationship between $\tau_{p,q}$, $\mu_{q}$, and $\lambda_{p,q}$. 
The proof of the following proposition follows immediately from the construction of $\mu_{q}$ and $\lambda_{p,q}$ and is given in \cite[Page 255]{W 63}.
\begin{proposition} \label{proposition: intersection and pre-comp}
Let $M$ be an oriented manifold of dimension $m$ and let $p$ and $q$ be positive integers with $p + q = m$ such that 
$p < q < 2p - 2$ and $q - p + 1 < \kappa(M)$. 
The equations 
$$\lambda_{p, q}(x, \; \tau_{p,q}(x', z)) = 0 \quad \text{and} \quad  \mu_{q}(\tau_{p,q}(x,  z), \; y) \; = \; \lambda_{p,q}(x, \; y)\cdot z$$
are satisfied  for all $x, x' \in \pi_{p}(M)$, $y \in \pi_{q}(M)$, and $z \in \pi_{q}(S^{p})$.
\end{proposition}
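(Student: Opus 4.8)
The plan is to verify the two identities separately, directly from the definitions: the first is a one-line homological computation and the second is an unwinding of Construction~\ref{construction: pontryagin thom}. For the identity $\lambda_{p,q}(x,\tau_{p,q}(x',z))=0$, represent $x'$ by a map $v\colon S^{p}\to M$ and $z$ by a map $\zeta\colon S^{q}\to S^{p}$, so that $\tau_{p,q}(x',z)=[v\circ\zeta]$. Since $q>p\geq 1$ we have $H_{q}(S^{p};\Z)=0$, whence $h_{q}(\tau_{p,q}(x',z))=v_{*}\zeta_{*}[S^{q}]=0$, and the defining formula for $\lambda_{p,q}$ gives $\lambda_{p,q}(x,\tau_{p,q}(x',z))=\Lambda_{p,q}(h_{p}(x),0)=0$. (Geometrically, $\tau_{p,q}(x',z)$ is represented by an embedded $q$-sphere lying in an arbitrarily thin tubular neighbourhood of an embedded $p$-sphere representing $x'$, and since $2p<m$ the latter can be taken disjoint from an embedded $p$-sphere representing $x$, so the algebraic intersection number is $0$.)

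For the identity $\mu_{q}(\tau_{p,q}(x,z),y)=\lambda_{p,q}(x,y)\cdot z$ I would run Construction~\ref{construction: pontryagin thom}, computing $\mu_{q}$ with first argument $\tau_{p,q}(x,z)$ and second argument $y$, and choose all the auxiliary data compatibly with embedded representatives supplied by Lemma~\ref{lemma: represent by embeddings}. Fix an embedding $g\colon S^{q}\to M$ representing $y$ and an embedding $u\colon S^{p}\to M$ representing $x$ and transverse to $g(S^{q})$; then $u(S^{p})\cap g(S^{q})$ is a finite set, so after choosing the hemisphere decomposition $S^{q}=D^{q}_{-}\cup D^{q}_{+}$ appropriately (equivalently, pre-composing $g$ with a suitable orientation-preserving diffeomorphism of $S^{q}$) we may assume all these points lie in $\Int(U_{+})$, where $U_{+}=g(D^{q}_{+})$, and we may take the tubular neighbourhood $V_{-}$ of $U_{-}=g(D^{q}_{-})$ small enough that $u(S^{p})\subset X=M\setminus\Int(V_{-})$. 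Let $\zeta\colon S^{q}\to S^{p}$ represent $z$; since $q<2p-2$, the Freudenthal Suspension Theorem lets us take $\zeta$ to be a suspension. Because $i\circ u\circ\zeta$ represents $\tau_{p,q}(x,z)$ and $i_{*}\colon\pi_{q}(X)\to\pi_{q}(M)$ is an isomorphism, the composite $u\circ\zeta\colon S^{q}\to X$ represents $i_{*}^{-1}(\tau_{p,q}(x,z))$, so it may serve as the map ``$f$'' in step~(ii) of the Construction, giving $\mu_{q}(\tau_{p,q}(x,z),y)=[T_{g}\circ u\circ\zeta]$.

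It then remains to identify this class. Put $\psi:=T_{g}\circ u\colon S^{p}\to S^{p}$. By the definition of the collapse map $T_{g}$, the preimage under $\psi$ of the centre point of $D^{p}/\partial D^{p}=S^{p}$ is the transverse intersection of $u(S^{p})$ with the zero section $U_{+}$ of the normal bundle $N$, and the trivialization $\phi$, chosen consistently with the orientations, identifies $\psi$ near each such point with a local diffeomorphism whose sign is the local intersection number of $u(S^{p})$ with $U_{+}$; hence $\deg(\psi)$ is the signed count of points of $u(S^{p})\cap U_{+}$. Since $u(S^{p})$ was arranged to meet $g(S^{q})$ only inside $U_{+}$, this signed count equals the algebraic intersection number of $u(S^{p})$ and $g(S^{q})$, which by the geometric description of $\lambda_{p,q}$ following~(\ref{eq: intersection pairing lambda}) is $\lambda_{p,q}(x,y)$. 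Finally, $\psi$ has degree $\lambda_{p,q}(x,y)$, and since $\zeta$ is a suspension the distributive law for composition in homotopy groups gives $[\psi\circ\zeta]=\deg(\psi)\cdot[\zeta]=\lambda_{p,q}(x,y)\cdot z$; therefore $\mu_{q}(\tau_{p,q}(x,z),y)=[T_{g}\circ u\circ\zeta]=\lambda_{p,q}(x,y)\cdot z$, as desired.

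The step I expect to be the main obstacle is the geometric bookkeeping in the last two paragraphs: arranging the embedded representative $u$ of $x$ so that it lies in $X$ and meets $g(S^{q})$ only in the interior of the hemisphere $U_{+}$, and then reconciling orientation conventions so that the local degree of $T_{g}\circ u$ at an intersection point equals the corresponding local intersection sign of $u(S^{p})$ with $g(S^{q})$ --- that is, matching the orientations built into the trivialization $\phi$ and the collapse $T_{g}$ with those defining $\lambda_{p,q}$ through the homological intersection pairing. The other ingredients --- the vanishing of $H_{q}(S^{p})$, the use of the Freudenthal theorem to make $\zeta$ a suspension, and the distributive law --- are formal.
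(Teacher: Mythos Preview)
Your proof is correct and is precisely the argument the paper has in mind: the paper does not give its own proof but states that the proposition ``follows immediately from the construction of $\mu_{q}$ and $\lambda_{p,q}$'' and cites \cite[Page 255]{W 63}, and the remark following the proposition isolates exactly your final step (that post-composing $z\in\pi_{q}(S^{p})$ with a degree-$l$ self-map of $S^{p}$ gives $l\cdot z$ in the stable range). You have simply written out what the paper leaves implicit, and your anticipated ``obstacle'' (the orientation bookkeeping identifying $\deg(T_{g}\circ u)$ with $\lambda_{p,q}(x,y)$) is handled by the orientation convention built into step~(iii) of Construction~\ref{construction: pontryagin thom}.
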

\begin{remark}
In \cite{W 63}, $\lambda_{p,q}$ is defined as a bilinear map over $\pi_{p}(S^{p})$. The second equation in the above proposition in \cite{W 63} reads as $\mu_{q}(\tau_{p,q}(x,  z), \; y) = \lambda_{p,q}(x, y)\circ z$, where $z \in \pi_{q}(S^{p})$ and $\lambda_{p,q}(x, y) \in \pi_{p}(S^{p})$. 
To identify our formula with that of C.T.C. Wall from \cite{W 63}, we are implicitly using the fact that post-composition of an element $z \in \pi_{q}(S^{p})$  by a degree $l$ map $S^{q} \rightarrow S^{q}$, induces multiplication by $l$ when the element $z$ is in the stable range. 
\end{remark}

We will also need to consider functions
\begin{equation} \label{eq: normal bundle map}
\alpha_{q}: \pi_{q}(M) \longrightarrow \pi_{q-1}(SO_{p}) \quad \text{and} \quad \alpha_{p}: \pi_{p}(M) \longrightarrow \pi_{p-1}(SO_{q})
\end{equation}
which are defined by sending an element $x \in \pi_{s}(M)$ for $s = p, q$, to the class in $\pi_{s-1}(SO_{m-s})$ which classifies the normal bundle of an embedding $S^{s} \rightarrow M$ which represents $x$. These maps are well defined since by Lemma \ref{lemma: represent by embeddings}, any element in $\pi_{s}(M)$ for $s = p, q$ is represented by an embedding which is unique up to regular homotopy (recall that the isomorphism class of the normal bundle of any immersion $S^{s} \rightarrow M$ is an invariant of its regular homotopy class). 
To describe the relationship between $\alpha_{p}$, $\alpha_{q}$, and $\mu_{q}$, we must consider maps,
\begin{equation} \label{eq: form parameter maps}
\partial_{q}: \pi_{q}(S^{p}) \longrightarrow \pi_{q-1}(SO_{p}) \quad \text{and} \quad  \bar{\pi}_{q}: \pi_{q-1}(SO_{p}) \longrightarrow \pi_{q}(S^{p}). 
\end{equation}
The map $\partial_{q}$ is the boundary homomorphism in the long exact sequence in homotopy groups associated to the fibre sequence $SO_{p} \rightarrow SO_{p+1} \rightarrow S^{p}$ and $\bar{\pi}_{q}$ is defined by the composition 
$\pi_{q-1}(SO_{p}) \rightarrow \pi_{q-1}(S^{p-1}) \rightarrow \pi_{q}(S^{p})$
where the first map is the homomorphism induced by the projection $SO_{p} \rightarrow SO_{p}/SO_{p-1} \cong S^{p-1}$ and the second is the suspension homomorphism. 
The next proposition follows from \cite[Theorem 1]{W 63}. 
\begin{proposition} \label{prop: intersection twist} 
Let $M$ be an oriented manifold of dimension $m$ and let $p$ and $q$ be positive integers with $p + q = m$ such that 
$p < q < 2p - 2$ and $q - p + 1 < \kappa(M)$. 
Then the equations 
\begin{itemize} \itemsep8pt
\item $\alpha_{p}(x + x') = \alpha_{p}(x) + \alpha_{p}(x')$,
\item $\alpha_{q}(y + y') = \alpha_{q}(y) + \alpha_{q}(y') + \partial_{q}(\mu_{q}(y, y'))$, 
\item $\mu_{q}(y, y) = \bar{\pi}_{q}(\alpha_{q}(y))$. 
\end{itemize}
are satisfied for all $x, x' \in \pi_{p}(M)$, $y, y' \in \pi_{q}(M)$.
\end{proposition}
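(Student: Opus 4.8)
The plan is to derive all three formulas from C.T.C. Wall's analysis in \cite[Theorem 1]{W 63} of how the normal bundle of an embedded sphere behaves under connected sum (addition in the homotopy group), translated into the present notational conventions. The geometric picture is the standard one: if $x, x' \in \pi_{s}(M)$ are represented by embeddings $f, f': S^{s} \hookrightarrow M$, chosen (using Lemma \ref{lemma: represent by embeddings}) to be disjoint, then $x + x'$ is represented by an embedded connected sum $f \# f'$ obtained by joining the two spheres along an embedded tube. The normal bundle of $f \# f'$ differs from the fibrewise sum of the normal bundles of $f$ and $f'$ by a clutching correction concentrated near the two points where the tube is attached; this correction is exactly measured by the intersection data of the two spheres. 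For $s = p$ there is no correction because $\pi_{p}(M) \otimes \pi_{p}(M) \to \pi_{p}(S^{q})$ vanishes for dimension reasons ($2p < m$ forces disjointness after isotopy with no algebraic intersection contributing to the relevant homotopy group), giving the first formula: $\alpha_{p}$ is a homomorphism. For $s = q$ the correction term is the image under $\partial_{q}$ of the self-intersection/linking invariant, which is precisely $\mu_{q}(y, y')$; this is the second formula.

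First I would recall the precise form of \cite[Theorem 1]{W 63}, which in Wall's notation expresses the obstruction $\alpha$ as a "quadratic" refinement of the intersection form, with the failure of additivity governed by a boundary map. I would then carefully match Wall's maps to the maps $\partial_q$ and $\bar\pi_q$ defined in (\ref{eq: form parameter maps}): $\partial_q$ is the connecting map of $SO_p \to SO_{p+1} \to S^p$, which is exactly the map that records how a trivial normal bundle of $S^q$ in $\R^{q+1}$ acquires a twist when the ambient dimension is raised to $m = p+q$; and $\bar\pi_q$ is the composite through $\pi_{q-1}(S^{p-1})$ and the suspension, which is the map appearing in Wall's relation between the normal bundle class and the self-intersection $\mu_q(y,y)$. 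The third formula, $\mu_{q}(y,y) = \bar\pi_{q}(\alpha_{q}(y))$, is the statement that the self-intersection of an embedded $q$-sphere is determined by its normal bundle: geometrically, pushing $S^q$ off itself along a section of the normal bundle, the algebraic self-intersection in $\pi_q(S^p)$ is read off from the Euler-class-type invariant of that bundle, which is $\bar\pi_q(\alpha_q(y))$.

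The main steps in order: (1) state Wall's Theorem 1 in its original form; (2) reconcile orientation and indexing conventions between \cite{W 63} and the present paper (in particular the $\lambda_{q,q}$-versus-$\mu_q$ renaming already flagged in the text, and the convention, noted in the remark after Proposition \ref{proposition: intersection and pre-comp}, about post-composition by degree maps inducing multiplication in the stable range); (3) read off the additivity of $\alpha_p$ from the vanishing of the relevant obstruction in the $p$-dimensional case; (4) read off the correction term for $\alpha_q$ and identify it with $\partial_q \circ \mu_q$; (5) read off the self-intersection formula and identify the relevant bundle invariant with $\bar\pi_q$. Throughout, the hypotheses $p < q < 2p-2$ and $q - p + 1 < \kappa(M)$ are used exactly to guarantee, via Lemma \ref{lemma: represent by embeddings} and the Freudenthal range, that embeddings representing classes exist, are unique up to regular homotopy, and can be taken in suitably general position so that Wall's arguments apply verbatim.

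The step I expect to be the main obstacle is (2), the bookkeeping of conventions: Wall works with $\lambda$ valued in $\pi_p(S^p) \cong \Z$ and states his normal-bundle theorem in a framework tuned to that setting, whereas here $\lambda_{p,q}$ is $\Z$-valued directly and $\mu_q$ is $\pi_q(S^p)$-valued, so matching signs, the $(-1)^q$-symmetry, and the precise source and target of the boundary and suspension maps requires care. Once the dictionary is fixed, the three formulas are essentially immediate transcriptions of \cite[Theorem 1]{W 63}, and no genuinely new geometric input is needed beyond what Lemma \ref{lemma: represent by embeddings} already supplies.
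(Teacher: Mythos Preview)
Your proposal is correct and matches the paper's approach exactly: the paper does not give an independent proof but simply states that the proposition ``follows from \cite[Theorem 1]{W 63},'' and your plan is precisely to transcribe Wall's Theorem 1 into the present notation, with the convention-matching in step (2) being the only real work. The bookkeeping you flag (the $\lambda_{q,q}$-versus-$\mu_q$ renaming and the degree-map convention) is indeed the entire content, and the paper itself acknowledges these same translation issues in the surrounding text.
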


We will also need to understand the relationship between $\alpha_{p}$, $\alpha_{q}$, and $\tau_{p,q}$. 
We will need to consider a bilinear map
\begin{equation} \label{eq: bundle composition}
 F_{p,q}: \pi_{p-1}(SO_{q})\otimes\pi_{q}(S^{p}) \longrightarrow \pi_{q-1}(SO_{p})
\end{equation}
which is defined in \cite[Lemma 2]{W 63} as follows. For $(x, z) \in \pi_{p-1}(SO_{q})\times\pi_{q}(S^{p})$, let $E(x) \rightarrow S^{p}$ be the vector bundle with fibre-dimension $q$, classified by the element $x \in \pi_{p-1}(SO_{q})$. Choose a section $\sigma: S^{p} \rightarrow E(x)$ of the bundle. 
Since the dimension of the total-space $E(x)$ is $p+q$, it follows from Lemma \ref{lemma: represent by embeddings} that the map $\alpha_{q}: \pi_{q}(E(x)) \rightarrow \pi_{q-1}(SO_{p})$ is well defined.
The map $F_{p,q}$ is then defined by setting,
$F_{p,q}(x, z) = \alpha_{q}(\sigma_{*}(z)),$ where $\sigma_{*}: \pi_{q}(S^{p}) \rightarrow \pi_{q}(E(x))$ is the map induced by $\sigma$.
Since any two sections of $E(x) \rightarrow S^{p}$ are fibrewise homotopic, it follows that the above formula used to define $F_{p,q}(x, z)$ does not depend on the choice of section $\sigma$. 
We have the following proposition.
\begin{proposition} \label{prop: bilinearity of F}
Suppose that $p < q < 2p - 2$. 
Then the map given by $(x, y) \mapsto F_{p,q}(x, y)$, as defined above is bilinear.
\end{proposition}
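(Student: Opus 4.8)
The plan is to reduce the bilinearity of $F_{p,q}$ to the linearity and bilinearity of $\tau_{p,q}$, $\lambda_{p,q}$, $\mu_{q}$, $\alpha_{p}$ and $\alpha_{q}$ recorded in Propositions \ref{proposition: intersection and pre-comp} and \ref{prop: intersection twist}. The bridge I would use is the following reformulation of the definition of $F_{p,q}$: for every compact, oriented, simply connected manifold $N$ of dimension $m$ with $q - p + 1 < \kappa(N)$, every $x_{0} \in \pi_{p}(N)$, and every $z \in \pi_{q}(S^{p})$, one has
$$\alpha_{q}\big(\tau_{p,q}(x_{0}, z)\big) \;=\; F_{p,q}\big(\alpha_{p}(x_{0}),\, z\big).$$
To prove this, represent $x_{0}$ by an embedding $\varphi \colon S^{p} \hookrightarrow N$ (Lemma \ref{lemma: represent by embeddings}) and identify an open tubular neighborhood $T$ of $\varphi(S^{p})$ with the total space $E(\alpha_{p}(x_{0}))$ of its normal bundle, so that $\varphi$ becomes the zero section $\sigma$. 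If $\zeta \colon S^{q} \to S^{p}$ represents $z$, then $\varphi \circ \zeta$ has image inside $\varphi(S^{p}) \subset T$, so $\tau_{p,q}(x_{0}, z)$ is the image under $\pi_{q}(T) \to \pi_{q}(N)$ of $\sigma_{*}(z)$. Since $T$ is open in $N$, the normal bundle of an embedded $q$-sphere in $T$ coincides with its normal bundle in $N$; together with uniqueness up to regular homotopy of the representing embeddings (Lemma \ref{lemma: represent by embeddings}, applied both in $T$, where $\kappa(T) = p - 1$ and $q < 2p - 2$ make the numerical hypotheses hold, and in $N$) this identifies $\alpha_{q}(\tau_{p,q}(x_{0}, z))$ with $\alpha_{q}(\sigma_{*}(z))$, which is $F_{p,q}(\alpha_{p}(x_{0}), z)$ by the very definition of $F_{p,q}$.

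Granting the identity, I would deduce linearity in the second variable by choosing $N = DE(x)$, the disk bundle of $E(x) \to S^{p}$: it is compact, oriented and simply connected of dimension $m = p + q$, has $\kappa(N) = p - 1 > q - p + 1$ by hypothesis, and the class $x_{0} \in \pi_{p}(N)$ of the zero section satisfies $\alpha_{p}(x_{0}) = x$. Then, using bilinearity of $\tau_{p,q}$, the identity above, and the additivity formula $\alpha_{q}(a + b) = \alpha_{q}(a) + \alpha_{q}(b) + \partial_{q}(\mu_{q}(a,b))$ of Proposition \ref{prop: intersection twist},
$$F_{p,q}(x, z + z') \;=\; \alpha_{q}\big(\tau_{p,q}(x_{0}, z) + \tau_{p,q}(x_{0}, z')\big) \;=\; F_{p,q}(x, z) + F_{p,q}(x, z') + \partial_{q}\big(\mu_{q}(\tau_{p,q}(x_{0}, z),\, \tau_{p,q}(x_{0}, z'))\big),$$
and the last term vanishes because $\mu_{q}(\tau_{p,q}(x_{0}, z), \tau_{p,q}(x_{0}, z')) = \lambda_{p,q}(x_{0}, \tau_{p,q}(x_{0}, z')) \cdot z = 0$ by Proposition \ref{proposition: intersection and pre-comp} (the first relation there kills $\lambda_{p,q}$ on the image of $\tau_{p,q}$). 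For linearity in the first variable I would instead take $N = DE(x) \,\natural\, DE(x')$, the boundary connected sum of the two disk bundles: this $N$ is again compact, oriented and simply connected of dimension $m$ with $\kappa(N) = p - 1$, and it carries classes $x_{0}, x_{0}' \in \pi_{p}(N)$ given by the two disjoint zero sections with $\alpha_{p}(x_{0}) = x$ and $\alpha_{p}(x_{0}') = x'$. By additivity of $\alpha_{p}$ (Proposition \ref{prop: intersection twist}) we get $\alpha_{p}(x_{0} + x_{0}') = x + x'$, so the identity gives $F_{p,q}(x + x', z) = \alpha_{q}(\tau_{p,q}(x_{0}, z) + \tau_{p,q}(x_{0}', z))$; expanding exactly as above and using $\mu_{q}(\tau_{p,q}(x_{0}, z), \tau_{p,q}(x_{0}', z)) = \lambda_{p,q}(x_{0}, \tau_{p,q}(x_{0}', z)) \cdot z = 0$ yields $F_{p,q}(x + x', z) = F_{p,q}(x, z) + F_{p,q}(x', z)$.

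The step I expect to be the real obstacle is the identity $\alpha_{q}(\tau_{p,q}(x_{0}, z)) = F_{p,q}(\alpha_{p}(x_{0}), z)$, and in particular the bookkeeping needed to check that the ``representable by an essentially unique embedding'' statements hold simultaneously in $N$ and in the tubular neighborhood $T \cong E(\alpha_{p}(x_{0}))$, and are compatible under the open inclusion $T \hookrightarrow N$; this is precisely where the hypotheses $p < q < 2p - 2$ enter, through the numerical inequalities in Lemma \ref{lemma: represent by embeddings}. Once that identity is in hand, everything else is a purely formal manipulation with the bilinear maps $\tau_{p,q}$, $\lambda_{p,q}$, $\mu_{q}$ and the additivity formulas of Proposition \ref{prop: intersection twist}.
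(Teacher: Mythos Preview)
Your argument is correct and largely parallel to the paper's, but with one genuine difference worth noting. For linearity in the second variable you do exactly what the paper does: work inside (the disk bundle of) $E(x)$, observe that $\sigma_{*}(z) = \tau_{p,q}(x_{0},z)$, and kill the correction term $\partial_{q}(\mu_{q}(\cdot,\cdot))$ using Proposition~\ref{proposition: intersection and pre-comp}. The identity $\alpha_{q}(\tau_{p,q}(x_{0},z)) = F_{p,q}(\alpha_{p}(x_{0}),z)$ that you isolate as the ``bridge'' is precisely the paper's Proposition~\ref{prop: relationship between alpha, tau}, proved there by the same tubular-neighborhood argument you sketch; the paper states it after the present proposition but its proof is independent of bilinearity, so your reordering is harmless.

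Where you genuinely diverge is linearity in the first variable. The paper does not argue this directly: it invokes \cite[Lemma~5~(1)]{W 63}, together with surjectivity of the suspension $\pi_{q-1}(S^{p-1}) \to \pi_{q}(S^{p})$ (which uses $q < 2p-2$), and leaves it at that. Your approach---realising $x$ and $x'$ simultaneously as disjoint zero sections in $N = DE(x)\,\natural\, DE(x')$ and then running the same $\mu_{q}$--$\lambda_{p,q}$ cancellation---is a self-contained alternative that avoids the appeal to Wall's lemma. It buys you uniformity (both variables are handled by the same mechanism) at the cost of constructing one extra ambient manifold; the paper's route is shorter but relies on an external reference whose proof is of a somewhat different flavour.
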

\begin{proof} Since $q < 2p - 2$, the Freudenthal Suspension Theorem implies that the suspension map $\Sigma: \pi_{q-1}(S^{p-1}) \rightarrow \pi_{q}(S^{p})$ is surjective. 
It then follows directly from \cite[Lemma 5 (1)]{W 63} that $F_{p, q}$ is linear in the first variable. 
We now must prove that $F_{p,q}$ is linear in the second variable. 
Let $y, y' \in \pi_{q}(S^{p})$ and $x \in \pi_{p-1}(SO_{q})$. 
Let $E(x) \rightarrow S^{p}$ be a vector bundle with fibre-dimension $q$, which represents the class $x \in \pi_{p-1}(SO_{q})$. 
Choose a section $\sigma: S^{p} \rightarrow E(x)$ and denote by $z \in \pi_{p}(E(x))$ the class represented by $\sigma$.
By the definition of the bilinear map $\tau_{p,q}: \pi_{p}(E(x))\otimes\pi_{q}(S^{p}) \longrightarrow \pi_{q}(E(x))$ from (\ref{eq: tau map}), we have 
$\sigma_{*}(y) = \tau_{p,q}(z, y)$ and $\sigma_{*}(y') = \tau_{p,q}(z, y')$. 
By Proposition \ref{proposition: intersection and pre-comp} it follows that,
$$\mu_{q}(\sigma_{*}(y), \sigma_{*}(y')) \; = \; \mu_{q}(\tau_{p,q}(z, y), \tau_{p,q}(z, y')) \; = \; \lambda_{p,q}(z, \tau_{p,q}(z, y'))\cdot y \; = \; 0,$$
and thus by Proposition \ref{prop: intersection twist} we have,
$$
\alpha_{q}(\sigma_{*}(y) + \sigma_{*}(y')) \; = \; \alpha_{q}(\sigma_{*}(y)) + \alpha_{q}(\sigma_{*}(y')).
$$
It then follows from the definition of $F_{p,q}$ that,
\begin{equation} \label{eq: alpha linearity 2}
F_{p,q}(x, y + y') \; = \; \alpha_{q}(\sigma_{*}(y) + \sigma_{*}(y')) \; = \; \alpha_{q}(\sigma_{*}(y)) + \alpha_{q}(\sigma_{*}(y')) \; = \; F_{p,q}(x, y) + F_{p,q}(x, y').
\end{equation}
This proves that $F_{p,q}$ is linear in the second variable. 
This concludes the proof of the proposition.
\end{proof}
With $F_{p,q}$ defined and shown to be bilinear when $p < q < 2q - 2$, we may now describe the relationship between $\alpha_{p}$, $\alpha_{q}$, and $\tau_{p,q}$. 
\begin{proposition} \label{prop: relationship between alpha, tau} Let $M$ be a manifold of dimension $m$ and let $p$ and $q$ be positive integers with $p + q = m$ such that  $p < q < 2p - 2$ and $q - p + 1 < \kappa(M)$. 
Then the equation 
$$\alpha_{q}(\tau_{p,q}(x, z)) \; = \; F_{p,q}(\alpha_{p}(x), \; z)$$ 
is satisfied for all $x \in \pi_{p}(M)$ and $z \in \pi_{q}(S^{p})$.
\end{proposition}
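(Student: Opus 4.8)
The plan is to represent $\tau_{p,q}(x,z)$ by an embedded $q$-sphere that lies inside a tubular neighborhood of an embedding representing $x$, and then to transport everything along the resulting codimension-zero inclusion, using naturality of both $\tau_{p,q}$ and $\alpha_{q}$.

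First I would choose, via Lemma \ref{lemma: represent by embeddings}, an embedding $f\colon S^{p}\hookrightarrow M$ representing $x$; by definition its normal bundle is classified by $\alpha_{p}(x)\in\pi_{p-1}(SO_{q})$. Let $E\to S^{p}$ be the rank-$q$ vector bundle classified by $\alpha_{p}(x)$. The tubular neighborhood theorem then provides an open embedding $j\colon E\hookrightarrow M$ carrying the zero section $\sigma\colon S^{p}\to E$ onto $f(S^{p})$; in particular $j_{*}(w)=x$, where $w=[\sigma]\in\pi_{p}(E)$. The total space $E$ is simply connected (it is homotopy equivalent to $S^{p}$, and $p\geq 4$), oriented, of dimension $m$, and has $\kappa(E)=p-1$; since $q<2p-2$ this gives $q-p+1<p-1=\kappa(E)$, so $E$ satisfies (\ref{eq: primary inequality}). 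In particular $\tau_{p,q}$ and $\alpha_{q}$ are defined on $E$, exactly as already used in the definition and bilinearity of $F_{p,q}$.

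Next I would invoke naturality of $\tau_{p,q}$: for a map $h$ and representatives $f_{a}$ of $a$ and $g_{z}$ of $z$, both $h_{*}\tau_{p,q}(a,z)$ and $\tau_{p,q}(h_{*}a,z)$ are represented by $h\circ f_{a}\circ g_{z}$. Applied to $h=j$ this yields
$$\tau_{p,q}(x,z)\;=\;\tau_{p,q}(j_{*}w,\,z)\;=\;j_{*}\,\tau_{p,q}(w,z)\;=\;j_{*}\big(\sigma_{*}(z)\big),$$
the final equality being the identity $\sigma_{*}(z)=\tau_{p,q}(w,z)$ already used in the proof of Proposition \ref{prop: bilinearity of F}. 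Since $j$ is an open embedding it identifies the normal bundle of any embedded $q$-sphere in $E$ with the normal bundle of its image in $M$, so $\alpha_{q}(j_{*}a)=\alpha_{q}(a)$ for every $a\in\pi_{q}(E)$, the left-hand invariant being computed in $M$ and the right-hand one in $E$. Hence
$$\alpha_{q}(\tau_{p,q}(x,z))\;=\;\alpha_{q}\big(j_{*}\sigma_{*}(z)\big)\;=\;\alpha_{q}\big(\sigma_{*}(z)\big)\;=\;F_{p,q}(\alpha_{p}(x),\,z),$$
where the last equality is the definition of $F_{p,q}$, which by Proposition \ref{prop: bilinearity of F} and the discussion preceding it is independent of the choice of section.

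The argument is largely formal once the geometry is arranged; I expect the only real (if modest) obstacle to be the identification of a tubular neighborhood of $f(S^{p})$ with the total space of $E(\alpha_{p}(x))$ and the accompanying verification that the connectivity hypotheses needed to make $\tau_{p,q}$ and $\alpha_{q}$ available carry over to this total space — which is precisely where the hypothesis $q<2p-2$ is used once more.
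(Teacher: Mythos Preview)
Your argument is correct and follows essentially the same route as the paper: represent $x$ by an embedding, identify a tubular neighborhood with the total space of the bundle classified by $\alpha_{p}(x)$, observe that $\tau_{p,q}(x,z)$ is the image under the codimension-zero inclusion of $\sigma_{*}(z)$, and then invoke naturality of $\alpha_{q}$ together with the definition of $F_{p,q}$. The only difference is cosmetic: you spell out the verification that $E$ satisfies the connectivity hypotheses, while the paper leaves this implicit.
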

\begin{proof} 
Let $x \in \pi_{p}(M)$ and $z \in \pi_{q}(S^{p})$. 
Let $g: S^{p} \rightarrow M$ be an embedding that represents $x$ and let $f: S^{q} \rightarrow S^{p}$ be a map that represents $z$. 
Let $E \subset M$ be a tubular neighborhood of $g(S^{p}) \subset M$ and let $\sigma: S^{p} \rightarrow E$ be a section of the projection map $E \rightarrow S^{p}$, which is a vector bundle whose isomorphism class is determined by $\alpha_{p}(x)$. 
Let $f': S^{q} \rightarrow E$ be an embedding homotopic to the composition $\sigma\circ f$. 
Denote by $i_{E}: E \hookrightarrow M$ the inclusion map.
Notice that, 
$(i_{E})_{*}(\sigma_{*}(z)) = \tau_{p, q}(x, z)$ as classes of $\pi_{q}(M)$.
Since $\alpha_{p}(x)$ classifies the bundle $E \rightarrow S^{p}$, it follows that 
$$F_{p,q}(\alpha_{p}(x), z) = \alpha_{q}(\sigma_{*}(z)) = \alpha_{q}(i_{*}\circ\sigma_{*}(z)) = \alpha_{q}(\tau_{p,q}(x, z)),$$
where the second equality follows from the fact that $\alpha_{q}$ is natural with respect to co-dimension $0$ embeddings.
This concludes the proof of the proposition.
\end{proof}

We sum up the results from this section with the following lemma.
\begin{lemma} \label{lemma: properties of the invariants}
Let $M$, $p$, and $q$ be as above. The maps 
\begin{itemize} \itemsep8pt
\item $\tau_{p,q}: \pi_{p}(M)\otimes\pi_{q}(S^{p}) \longrightarrow \pi_{q}(M)$,
\item $\lambda_{p, q}: \pi_{p}(M)\otimes\pi_{q}(M) \longrightarrow \Z$,
\item $\mu_{q}: \pi_{q}(M)\otimes\pi_{q}(M) \longrightarrow \pi_{q}(S^{p})$,
\item $\alpha_{p}: \pi_{p}(M) \longrightarrow \pi_{p-1}(SO_{q})$,
\item $\alpha_{q}: \pi_{q}(M) \longrightarrow \pi_{q-1}(SO_{p})$,
\end{itemize}
satisfy the following conditions. For all $x,  x' \in \pi_{p}(M)$, $y,  y' \in \pi_{q}(M)$ and $z \in \pi_{q}(S^{p})$ we have:
\begin{enumerate} \itemsep8pt
\item[i.] 
$\lambda_{p, q}(x, \; \tau_{p,q}(x', z)) = 0,$
\item[ii.] 
$\mu_{q}(\tau_{p,q}(x,  z), \; y) \; = \; \lambda_{p,q}(x, \; y)\cdot z,$
\item[iii.] 
$\alpha_{q}(x + x') = \alpha_{p}(x) + \alpha_{p}(x'),$
\item[iv.]
$\alpha_{q}(y + y') \; = \; \alpha_{q}(y) + \alpha_{q}(y') + \partial_{q}(\mu_{q}(y, y')),$
\item[v.]
$\mu_{q}(y, y) = \bar{\pi}_{q}(\alpha_{q}(y)),$
\item[vi.]  
$\alpha_{q}(\tau_{p,q}(x, z)) \; = \; F_{p,q}(\alpha_{p}(x), \; z).$
\end{enumerate}
\end{lemma}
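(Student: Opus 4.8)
The plan is to assemble the lemma directly from the results already established in this section, since it is a bookkeeping summary rather than a genuinely new statement. The first thing I would check is that, under the standing hypotheses $p < q < 2p - 2$ and $q - p + 1 < \kappa(M)$, all five maps in the list are well defined. For this, note that these inequalities place both $s = p$ and $s = q$ in the range of Lemma \ref{lemma: represent by embeddings}, so every class of $\pi_p(M)$ and of $\pi_q(M)$ is represented by an embedding which is unique up to regular homotopy; this makes $\alpha_p$ and $\alpha_q$ well defined, since the isomorphism class of the normal bundle of an immersion is a regular-homotopy invariant, and it makes $\mu_q$ well defined by the remark following Construction \ref{construction: pontryagin thom}. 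Bilinearity of $\tau_{p,q}$ and the bilinearity and $(-1)^q$-symmetry of $\mu_q$ were recorded in the discussion of (\ref{eq: tau map}) and (\ref{eq: mu pairing}) (the latter invoking the corollary to \cite[Lemma 4]{W 63}), and $\lambda_{p,q}$ is manifestly bilinear from its definition via the Hurewicz map and the homological intersection pairing.

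The remaining task is simply to match the six identities to the propositions proved above. Identities i and ii are exactly the two equations of Proposition \ref{proposition: intersection and pre-comp}. Identities iii, iv, and v are exactly the three bullet points of Proposition \ref{prop: intersection twist} (identity iii being read with the evident correction $\alpha_p(x + x') = \alpha_p(x) + \alpha_p(x')$ on the left; the point is that $\alpha_p$ is additive with no correction term, in contrast to $\alpha_q$, which picks up the term $\partial_q(\mu_q(y, y'))$). Finally, identity vi is precisely Proposition \ref{prop: relationship between alpha, tau}, whose statement requires Proposition \ref{prop: bilinearity of F} in order to know that $F_{p,q}$ is bilinear in the range $p < q < 2p - 2$. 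Thus the lemma follows by quoting these four propositions together with the well-definedness discussion above.

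I do not expect any real obstacle: the substantive work is all contained in Propositions \ref{proposition: intersection and pre-comp}, \ref{prop: intersection twist}, and \ref{prop: relationship between alpha, tau}, and ultimately in \cite{W 63}. The only point deserving a moment of care is to confirm that the hypothesis $q - p + 1 < \kappa(M)$ is exactly what feeds each application of Lemma \ref{lemma: represent by embeddings} for both $s = p$ and $s = q$, guaranteeing uniqueness of the representing embeddings up to regular homotopy — but this verification was already made when those propositions were stated, so nothing new is needed here.
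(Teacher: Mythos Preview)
Your proposal is correct and matches the paper's approach exactly: the paper introduces this lemma with the sentence ``We sum up the results from this section with the following lemma'' and gives no separate proof, since the six identities are literally the content of Propositions \ref{proposition: intersection and pre-comp}, \ref{prop: intersection twist}, and \ref{prop: relationship between alpha, tau}. Your observation about the typo in item iii (the left-hand side should read $\alpha_{p}(x+x')$) is also accurate.
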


The next proposition follows immediately from the construction of $\tau_{p,q}$, $\lambda_{p,q}$, $\mu_{q}$, $\alpha_{p}$ and $\alpha_{q}$.\begin{proposition} \label{prop: functoriality of the structure} Let $M$, $p$, and $q$ be as above and let $N$ be an oriented manifold of dimension $m = \dim(M)$ such that $\kappa(N) > q - p + 1$ is satisfied. 
Let $\psi: N \longrightarrow M$ be a smooth embedding. Then the maps on homotopy groups of degree $p$ and $q$ induced by $\psi$ preserves all values of $\tau_{p,q}$, $\lambda_{p,q}$, $\mu_{q}$, $\alpha_{p}$ and $\alpha_{q}$.
\end{proposition}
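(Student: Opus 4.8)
The plan is to verify the assertion separately for each of the five invariants, exploiting in every case the fact that a codimension-zero embedding $\psi \colon N \hookrightarrow M$ is transparent to the geometric constructions defining them. First note that the hypotheses $q < 2p-2$ and $\kappa(N) > q - p + 1$ make Lemma \ref{lemma: represent by embeddings} applicable verbatim with $N$ in place of $M$ (one checks $q \leq \min\{\tfrac{m+\kappa(N)-2}{2}, \tfrac{2m-3}{3}\}$, the second bound being equivalent to $q < 2p-2$), so every class of $\pi_{s}(N)$ for $s = p,q$ is represented by an embedding $S^{s}\to N$, unique up to regular homotopy; composing such an embedding with $\psi$ yields an embedding into $M$ representing the image class under $\psi_{*}$. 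Throughout we take the orientation of $N$ so that $\psi$ is orientation-preserving, as is the case in all applications; this is what is needed for $\psi$ to respect the orientations entering the definitions of $\lambda_{p,q}$, $\mu_{q}$, $\alpha_{p}$, $\alpha_{q}$.

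For $\tau_{p,q}$ the claim is immediate: if $f\colon S^{p}\to N$ represents $x$ and $g\colon S^{q}\to S^{p}$ represents $z$, then $\psi\circ f\circ g$ represents simultaneously $\psi_{*}(\tau^{N}_{p,q}(x,z))$ and $\tau^{M}_{p,q}(\psi_{*}x, z)$. For $\lambda_{p,q}$, represent $x\in\pi_{p}(N)$ and $y\in\pi_{q}(N)$ by transverse embeddings $f\colon S^{p}\to N$ and $g\colon S^{q}\to N$; then $\psi\circ f$ and $\psi\circ g$ are transverse in $M$, and because $\psi$ is an orientation-preserving codimension-zero embedding it carries the (finite) set of intersection points to the set of intersection points with matching signs, so the algebraic intersection number is unchanged. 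For $\alpha_{p}$ and $\alpha_{q}$: if $e\colon S^{s}\to N$ is an embedding representing $x\in\pi_{s}(N)$, then $d\psi$ is a fibrewise isomorphism identifying the (oriented) normal bundle of $e$ in $N$ with the normal bundle of $\psi\circ e$ in $M$, whence $\alpha^{M}_{s}(\psi_{*}x) = \alpha^{N}_{s}(x)$ in $\pi_{s-1}(SO_{m-s})$.

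The one case that requires a little bookkeeping is $\mu_{q}$, and the plan there is to carry out Construction \ref{construction: pontryagin thom} entirely inside $N$ and then push it forward along $\psi$. Given $x,y\in\pi_{q}(N)$, choose an embedding $g\colon S^{q}\to N$ representing $y$, form $U_{\pm}$, a closed tubular neighborhood $V_{-}$ of $U_{-}$ in $N$, and $X^{N} = N\setminus\Int(V_{-})$ as in the construction. Since $\psi$ is codimension zero, $\psi(V_{-})$ is a closed tubular neighborhood of $\psi(U_{-})$ in $M$; set $X^{M} = M\setminus\Int(\psi(V_{-}))$, so that $\psi$ restricts to a codimension-zero embedding $X^{N}\hookrightarrow X^{M}$ commuting with the inclusions into $N$ and $M$. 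Consequently $i^{M}_{*}\circ\psi_{*} = \psi_{*}\circ i^{N}_{*}$, and since both $i^{N}_{*}$ and $i^{M}_{*}$ are isomorphisms we get $(i^{M}_{*})^{-1}(\psi_{*}x) = \psi_{*}\big((i^{N}_{*})^{-1}(x)\big)$. Moreover $\psi$ carries the normal disk bundle of $U_{+}$ in $X^{N}$, with its essentially unique orientation-compatible trivialization, onto that of $\psi(U_{+})$ in $X^{M}$, so the Pontryagin--Thom collapse maps satisfy $T^{M}_{\psi\circ g}\circ\psi = T^{N}_{g}$ on $X^{N}$ (points of $X^{M}$ outside $\psi(X^{N})$ lie outside the relevant tubular neighborhood and map to the basepoint). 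Taking $f\colon S^{q}\to X^{N}$ to represent $(i^{N}_{*})^{-1}(x)$, the composite $\psi\circ f$ represents $(i^{M}_{*})^{-1}(\psi_{*}x)$, and therefore
$$\mu^{M}_{q}(\psi_{*}x,\,\psi_{*}y) \;=\; [\,T^{M}_{\psi\circ g}\circ\psi\circ f\,] \;=\; [\,T^{N}_{g}\circ f\,] \;=\; \mu^{N}_{q}(x,y).$$
The main obstacle is thus not conceptual but purely a matter of matching the auxiliary complements $X^{N}$, $X^{M}$ and their collapse maps; once this is in place the proposition follows, and the relations of Lemma \ref{lemma: properties of the invariants} are then automatically transported by $\psi_{*}$.
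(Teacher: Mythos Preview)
Your proof is correct. The paper itself gives no proof at all, stating only that the proposition ``follows immediately from the construction of $\tau_{p,q}$, $\lambda_{p,q}$, $\mu_{q}$, $\alpha_{p}$ and $\alpha_{q}$''; what you have written is precisely the unpacking of that claim, invariant by invariant, and is exactly the argument the reader is implicitly asked to supply. Your remark about fixing the orientation on $N$ so that $\psi$ is orientation-preserving is a genuine point the paper glosses over, and your treatment of $\mu_{q}$ via matching the auxiliary complements and collapse maps is the right way to make the ``immediate'' claim rigorous.
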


\subsection{Modifying Intersections} \label{subsection: modifying intersections}
We will ultimately need to use $\lambda_{p, q}$ and $\mu_{q}$ to study intersections of embedded submanifolds.
Let $M$ be an oriented manifold of dimension $m$ and let $p$ and $q$ be positive integers such that $p + q = m$.
The integer $\lambda_{p,q}(x, y)$ is equal to the signed intersection number associated to embeddings $f: S^{p} \rightarrow M$ and $g: S^{q} \rightarrow M$ which represent $x$ and $y$ respectively. 
If $M$ is simply-connected, then by application of the \textit{Whitney trick} \cite[Theorem 6.6]{M 65}, one can deform $g$ through a smooth isotopy to a new embedding $g': S^{q} \rightarrow M$ such that $g'(S^{q})$ and $f(S^{p})$ intersect transversally at exactly $|\lambda_{p,q}(x,y)|$-many points. Now 
let $f, g: S^{q} \rightarrow M$ be
embeddings whose images intersect transversally. 
We will need a higher dimensional analogue of the Whitney trick that applies to the intersection of such embeddings.
In \cite{We 67} and \cite{HQ 74},
such generalized versions of the Whitney trick are
developed. We have:
\begin{proposition} {\rm (Wells, \cite{We 67})} \label{prop: Generalized Whitney Trick} 
Let $M$, $p$ and $q$ be as above and suppose that $p < q < 2p - 2$ and  $q - p + 1 < \kappa(M)$.
Let
$f, g: S^{q} \longrightarrow M$ be
embeddings. Then there
exists an isotopy $\Psi_{t}: S^{q} \rightarrow M$ such that
 $\Psi_{0} = g$
\text{and} $\Psi_{1}(S^{q})\cap f(S^{q}) = \emptyset$, if and only if $\mu_{q}([g], [f]) = 0$. Furthermore,
if $f$ and $g$ are transverse and $U \subset S^{q}$ is an open disk
containing $g^{-1}(f(S^{q}))$, then the isotopy 
$\Psi_{t}$ may be chosen so that $\Psi_{t}(x) = g(x)$ for all $x \in S^{q}\setminus U$ and $t \in [0,1]$.
\end{proposition}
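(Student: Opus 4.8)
The plan is to reduce the statement to the metastable-range generalization of the Whitney trick due to Wells \cite{We 67} (compare Hatcher--Quinn \cite{HQ 74}), once the obstruction occurring there has been identified with Wall's invariant $\mu_q$. The \emph{only if} direction is essentially a reading of Construction \ref{construction: pontryagin thom}. Since $\mu_q$ depends only on the homotopy classes of its arguments and an isotopy is in particular a homotopy, after replacing $g$ by $\Psi_1$ we may assume $f(S^q)\cap g(S^q)=\emptyset$. In the notation of Construction \ref{construction: pontryagin thom}, with $[g]$ represented by the given embedding $g$, shrink the tubular neighborhood $V_-$ of $g(D^q_-)$ so that it too misses $f(S^q)$; then $f$ itself lies in $X=M\setminus\Int(V_-)$ and represents $i_*^{-1}([f])$. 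For a regular value $v$ in the interior of $D^p$ close enough to $0$, the submanifold $(T_g)^{-1}(v)=\phi^{-1}(U_+\times\{v\})$ lies in an arbitrarily small neighborhood of $U_+=g(D^q_+)$ and hence misses the compact set $f(S^q)$; thus $(T_g\circ f)^{-1}(v)=\emptyset$, so by Pontryagin--Thom $\mu_q([f],[g])=[T_g\circ f]=0$, and $\mu_q([g],[f])=(-1)^q\mu_q([f],[g])=0$.

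For the \emph{if} direction, perturb $g$ by a small isotopy so that it becomes transverse to $f$. Then $\Delta:=f(S^q)\cap g(S^q)$ is a closed submanifold of dimension $q-p\geq 1$ --- so the classical Whitney trick does not apply --- and $L:=g^{-1}(\Delta)\subset S^q$ is a closed $(q-p)$-submanifold carrying normal data built from the normal bundles of $f(S^q)$ and $g(S^q)$ along $\Delta$. The central point is that the associated bordism class, which a priori lies in a Hatcher--Quinn-type bordism group of the homotopy pullback of $f$ and $g$ over $M$, collapses --- because $M$ is $\kappa(M)$-connected with $\kappa(M)>q-p+1$ and because $q<2p-2$ --- to an element of $\Omega^{\mathrm{fr}}_{q-p}\cong\pi^S_{q-p}\cong\pi_q(S^p)$, and that element is exactly $\mu_q([g],[f])$: this is the Pontryagin--Thom recipe of Construction \ref{construction: pontryagin thom} read off on the intersection manifold rather than on a regular value of $T_g$, the $\alpha$-type corrections of Lemma \ref{lemma: properties of the invariants} being absent below the stable range. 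Granting this, $\mu_q([g],[f])=0$ says exactly that the normal data of $L$ extends over a compact $(q-p+1)$-manifold $W$.

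The hypotheses $p<q<2p-2$ and $q-p+1<\kappa(M)$ now place us in Wells' setting: $W$ has dimension $q-p+1<\kappa(M)$ and very high codimension $2p-1\geq 7$ in $M$, so $W$ embeds in $M$ as a ``Whitney membrane'' attached to $\Delta$ in the standard way, with interior meeting $f(S^q)\cup g(S^q)$ only along the prescribed boundary pattern; thickening $W$ to a region of dimension $p+q$ and pushing $g(S^q)$ across it yields an isotopy $\Psi_t$ with $\Psi_0=g$ and $\Psi_1(S^q)\cap f(S^q)=\emptyset$. This is Wells' theorem, whose embedding- and general-position hypotheses are all implied by $p<q<2p-2$ and $q-p+1<\kappa(M)$. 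For the localized statement, suppose in addition that $U\subset S^q$ is an open disk with $g^{-1}(f(S^q))\subset U$. Then $L\subset U$, the set $g(\overline U)$ is an embedded $q$-disk meeting $f(S^q)$ only near $\Delta$, and it has a ball neighborhood $B\cong D^{p+q}$ in $M$ with the same property. Since $\dim W=q-p+1<\kappa(M)$ and $B$ is contractible, the obstructions to pushing $W$ into $B$ rel $\partial W$ lie in $\pi_j(M)=0$ for $j\leq q-p+1$, and inside $B$ the membrane, of codimension $\geq 3$, can be returned to general position by an ambient isotopy of $B$. Running Wells' construction with this $W$ produces an isotopy supported near $g(\overline U)$, which reparametrizes to an isotopy $\Psi_t\colon S^q\to M$ with $\Psi_t(x)=g(x)$ for all $x\in S^q\setminus U$ and all $t$.

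The step I expect to be the genuine obstacle is the identification of $\mu_q([g],[f])$ with Wells' intersection obstruction: one has to verify that the normal data determined on $L$ by transversality and the (essentially unique) normal trivializations of $f(S^q)$ and $g(S^q)$ in $M$ is precisely what the Pontryagin--Thom formula of Construction \ref{construction: pontryagin thom} computes, and that the signs and the $(-1)^q$-symmetry come out consistently. Once that bookkeeping is settled, the remainder is a citation of \cite{We 67} together with the routine verification that $p<q<2p-2$ and $q-p+1<\kappa(M)$ supply the dimension and connectivity hypotheses of that theorem.
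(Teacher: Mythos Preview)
Your overall strategy matches the paper's: both reduce to Wells' theorem and identify Wells' intersection obstruction $\alpha(g(S^q),f(S^q);M)\in\pi^S_{q-p}$ with $\mu_q([g],[f])$. The paper treats the main statement as essentially a citation, recording only that this identification follows from the construction of $\alpha$ in \cite{We 67}; your more explicit discussion of the Pontryagin--Thom picture and of the membrane is correct but more than is strictly needed once one is citing Wells.

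The genuine difference is in the localized statement. You attempt to push the Whitney membrane $W$ into a ball $B$ around $g(\bar U)$ and run the construction there. This is plausible, but it requires care you have not fully supplied: you must control how $B$ meets both $f(S^q)$ and $g(S^q\setminus U)$, and then argue that the resulting ambient isotopy of $B$ genuinely fixes $g|_{S^q\setminus U}$ pointwise (not merely up to a collar). The paper's approach sidesteps all of this. It takes the closed disk $D^q_+\subset S^q$ containing $g^{-1}(f(S^q))$, lets $D^q_-$ be its complement, chooses a closed tubular neighborhood $N$ of $g(D^q_-)$ with $N\cap f(S^q)=\emptyset$ and $N\cap g(D^q_+)=g(\partial D^q_+)$, and then applies Wells' theorem \emph{in its relative form} to $A=g(D^q_+)$, $B=f(S^q)$ inside $X=M\setminus\Int(N)$, with $\partial A\subset\partial X$. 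One checks $\alpha(A,B;X)=\alpha(g(S^q),f(S^q);M)$, so Wells gives an isotopy of $A$ in $X$ fixing $\partial A$, which then extends by the identity over $g(D^q_-)$ to the desired $\Psi_t$. This buys you the relative conclusion directly from the cited theorem, with no membrane-localization argument at all.
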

\begin{remark} \label{remark: relative diffeotopy}
 This is a specialization of the main theorem of \cite{We 67}. In \cite{We 67} the author associates to submanifolds $A^{r}, B^{s} \subset X^{m}$ with $\partial A, \partial B \subset \partial X$ and $\partial A\cap\partial B = \emptyset$, an invariant $\alpha(A, B; X)$ valued in the stable homotopy group
$\pi^{S}_{r+s-m}$
and proves that if certain dimensional and connectivity conditions are
met, then there exists an isotopy $\Psi_{t}: A \rightarrow X$ such that
$\Psi_{0} = i_{A}$, $\Psi_{t}|_{\partial_{A}} = i_{\partial A}$ for all $t \in [0,1]$, and $\Psi_{1}(A) \cap B = \emptyset$, 
if and only if
$\alpha(A, B; X) = 0$, where $i_{A}$ and $i_{\partial A}$ are the inclusion maps.   
By letting $A$, $B$, and $X$ denote $g(S^{q})$, $f(S^{q})$, and $X$ respectively, 
it follows from the construction of $\alpha(A, B; X)$ in \cite{We 67} that $\alpha(A, B; X)$ is equal to $\mu_{q}([g], [f])$ after identifying $\pi_{q}(S^{p})$ with $\pi_{2q-m}^{S}$. 

To obtain the second part of Proposition \ref{prop: Generalized Whitney Trick} we do the following. 
Suppose that $f$ and $g$ are transversal and let $D^{q}_{+} \subset S^{q}$ be a closed disk which contains $g^{-1}(f(S^{q}))$ (we may assume that this disk is the upper hemisphere, hence the notation). 
Let $D^{q}_{-} \subset S^{q}$ be the closure of the complement of $D^{q}_{+}$ (which is the lower hemisphere) and let $U_{\pm}$ denote the submanifold $g(D^{q}_{\pm})\subset M$. 
Let $N \subset M$ be a closed tubular neighborhood of $U_{-}$ such that $N\cap f(S^{q}) = \emptyset$ and  $N\cap U_{+} = \partial U_{+}$. 
We then let $A$, $B$, and $X$ denote $U_{+}$, $f(S^{q})$, and $M\setminus\Int(N)$ respectively. 
It follows from the construction of $\alpha(A, B; X)$ in \cite{W 63} that 
$$\alpha(A, B; X) =  \alpha(g(S^{q}), f(S^{q}); M).$$
If $\mu_{q}([g], [f]) = \alpha(g(S^{q}), f(S^{q}); M) = 0$, then  
the relative statement of the main theorem from \cite{We 67} (described in the previous paragraph) applied to $A, B$ and $X$, 
implies that there exists an isotopy $\psi_{t}: A \rightarrow X$ such that
$\psi_{0} = i_{A}$, $\psi_{t}|_{\partial_{A}} = i_{\partial A}$ for all $t \in [0,1]$, and $\psi_{1}(A) \cap B = \emptyset$.
Now, let $A'$ denote the submanifold $g(S^{q}) \subset M$. 
We define $\Psi_{t}: A' \rightarrow M$ to be  
 the isotopy obtained by setting $\Psi_{t}$ equal to $\psi_{t}$ on $A$ and equal to $i_{A'}$ on $A' \setminus A$ for all $t \in [0,1]$. 
 This yields the second statement in Proposition \ref{prop: Generalized Whitney Trick}.
 \end{remark}

\subsection{Application to Products of Spheres} \label{subsection: Application to Products of Spheres}
We will now determine the values of the maps from Lemma \ref{lemma: properties of the invariants} on the homotopy groups of the manifolds $W^{g}_{p,q}$, for $p < q < 2p-2$.
Recall the notation $W_{p,q} = W^{1}_{p,q}$.
Choose base points $a_{0} \in S^{p}$ and $b_{0} \in S^{q}$ so that $(a_{0}, b_{0}) \in W_{p,q} \subset S^{p}\times S^{q}$. 
Let
\begin{equation} \label{eq: standard inclusions}
i_{p}: S^{p} \longrightarrow W_{p,q} \quad \text{and} \quad i_{q}: S^{q} \longrightarrow W_{p,q}
\end{equation}
be the maps given by the inclusions,
$S^{p}\times\{b_{0}\} \hookrightarrow W_{p,q}$ and $\{a_{0}\}\times S^{q} \hookrightarrow W_{p,q}.$
Let $\sigma \in  \pi_{p}(W_{p,q})$ and $\rho \in \pi_{q}(W_{p,q})$ be the elements represented by the maps from (\ref{eq: standard inclusions}).
The inclusion map $W_{p,q} \hookrightarrow S^{p}\times S^{q}$ induces isomorphisms,
$$\begin{aligned}
&\pi_{p}(W_{p,q}) \cong \pi_{p}(S^{p}\times S^{q}) \cong \pi_{p}(S^{p}),\\
&\pi_{q}(W_{p,q}) \cong \pi_{q}(S^{p}\times S^{q}) \cong \pi_{q}(S^{q})\oplus\pi_{q}(S^{p}).
 \end{aligned}$$
Denote by $\widehat{\tau}_{p,q}: \pi_{q}(S^{p}) \rightarrow \pi_{q}(W_{p,q})$ the map given by $z \mapsto \tau_{p,q}(\sigma, z)$. 
It follows from the above isomorphisms that $\widehat{\tau}_{p,q}$ is a monomorphism. 
From this we have:
\begin{proposition} The $p$th and $q$th homotopy groups of $W_{p,q}$ are as follows:
$$\pi_{p}(W_{p,q}) = \Z\langle \sigma \rangle, \quad \pi_{q}(W_{p,q}) = \Z\langle \rho \rangle\oplus \widehat{\tau}_{p,q}(\pi_{q}(S^{p})).$$
\end{proposition}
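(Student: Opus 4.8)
The plan is to read off everything from the inclusion $\iota\colon W_{p,q}\hookrightarrow S^{p}\times S^{q}$. Since $W_{p,q}$ is obtained from $S^{p}\times S^{q}$ by deleting the interior of an embedded $m$-disk, excision identifies $H_{*}(S^{p}\times S^{q},W_{p,q})$ with $H_{*}(D^{m},S^{m-1})$, so the pair is $(m-1)$-connected; as both spaces are simply connected (here $p\geq 4$), relative Hurewicz shows $\iota$ is an $(m-1)$-connected map, and since $p<q\leq m-2$ this gives the isomorphisms $\iota_{*}\colon\pi_{p}(W_{p,q})\xrightarrow{\cong}\pi_{p}(S^{p}\times S^{q})$ and $\iota_{*}\colon\pi_{q}(W_{p,q})\xrightarrow{\cong}\pi_{q}(S^{p}\times S^{q})$ already recorded above. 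I will then use the standard splitting $\pi_{n}(S^{p}\times S^{q})\cong\pi_{n}(S^{p})\oplus\pi_{n}(S^{q})$, realized by the two factor projections, with one-sided inverse the sum of the maps induced by the inclusions $j_{p}\colon S^{p}\times\{b_{0}\}\hookrightarrow S^{p}\times S^{q}$ and $j_{q}\colon\{a_{0}\}\times S^{q}\hookrightarrow S^{p}\times S^{q}$; since $p,q>1$ and the target is simply connected there is no based/unbased ambiguity. It then remains to locate $\sigma$, $\rho$ and $\widehat{\tau}_{p,q}(\pi_{q}(S^{p}))$ under $\iota_{*}$.

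For degree $p$: by (\ref{eq: standard inclusions}) the composite $\iota\circ i_{p}$ is exactly $j_{p}$, so $\iota_{*}(\sigma)$ is a generator of the summand $\pi_{p}(S^{p})=\Z$, while the other summand $\pi_{p}(S^{q})$ vanishes because $q>p$. As $\iota_{*}$ is an isomorphism, $\sigma$ generates $\pi_{p}(W_{p,q})$ freely, i.e.\ $\pi_{p}(W_{p,q})=\Z\langle\sigma\rangle$.

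For degree $q$: again by (\ref{eq: standard inclusions}), $\iota\circ i_{q}=j_{q}$, so $\iota_{*}(\rho)$ is a generator of the free summand $\pi_{q}(S^{q})=\Z$. On the other hand, for $z\in\pi_{q}(S^{p})$ with representative $z'\colon S^{q}\to S^{p}$ we have $\widehat{\tau}_{p,q}(z)=\tau_{p,q}(\sigma,z)=[i_{p}\circ z']$, hence $\iota_{*}\widehat{\tau}_{p,q}(z)=[j_{p}\circ z']=(j_{p})_{*}(z)$, which is precisely the image of $z$ under the canonical inclusion of $\pi_{q}(S^{p})$ as the first summand of $\pi_{q}(S^{p})\oplus\pi_{q}(S^{q})$. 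Thus $\iota_{*}$ maps $\widehat{\tau}_{p,q}(\pi_{q}(S^{p}))$ isomorphically onto the summand $\pi_{q}(S^{p})$ (using that $\widehat{\tau}_{p,q}$ is already known to be a monomorphism) and maps $\Z\langle\rho\rangle$ isomorphically onto $\pi_{q}(S^{q})$. Since these two summands are complementary in $\pi_{q}(S^{p})\oplus\pi_{q}(S^{q})$ and $\iota_{*}$ is an isomorphism, pulling the decomposition back gives $\pi_{q}(W_{p,q})=\Z\langle\rho\rangle\oplus\widehat{\tau}_{p,q}(\pi_{q}(S^{p}))$.

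I do not expect a genuine obstacle: the proposition is essentially a bookkeeping consequence of the two displayed isomorphisms. The only step needing a little care is the identification $\iota_{*}\circ\widehat{\tau}_{p,q}=(j_{p})_{*}$ with the canonical inclusion of the $\pi_{q}(S^{p})$-summand, which in turn rests on the routine observation that $\iota\circ i_{p}$ and $\iota\circ i_{q}$ are the evident factor inclusions $j_{p}$ and $j_{q}$; everything else is immediate.
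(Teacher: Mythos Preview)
Your argument is correct and is exactly the approach the paper takes: the paper records the isomorphisms $\pi_{p}(W_{p,q})\cong\pi_{p}(S^{p})$ and $\pi_{q}(W_{p,q})\cong\pi_{q}(S^{q})\oplus\pi_{q}(S^{p})$ induced by $W_{p,q}\hookrightarrow S^{p}\times S^{q}$, notes that $\widehat{\tau}_{p,q}$ is a monomorphism, and then states the proposition as an immediate consequence (``From this we have''). You have simply written out the identifications of $\sigma$, $\rho$, and $\widehat{\tau}_{p,q}(\pi_{q}(S^{p}))$ under $\iota_{*}$ that the paper leaves implicit.
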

The embeddings $i_{p}: S^{p} \rightarrow W_{p,q}$ and $i_{q}: S^{q} \rightarrow W_{p,q}$ both have trivial normal bundles and $i_{p}(S^{p})$ and $i_{q}(S^{q})$ intersect transversally at exactly one point. It follows that 
$$\alpha_{p}(\sigma) = 0, \quad \alpha_{q}(\rho) = 0, \quad \text{and} \quad \lambda_{p,q}(\sigma, \rho) = 1.$$
The boundary-connected-sum decomposition, $W^{g}_{p,q} = (W_{p,q})^{\natural g}$
induces isomorphisms,
\begin{equation} \label{eq: direct sum decomp}
\pi_{p}(W^{g}_{p,q}) \stackrel{\cong} \longrightarrow  \pi_{p}(W_{p,q})^{\oplus g}, \quad \pi_{q}(W^{g}_{p,q})\stackrel{\cong} \longrightarrow \pi_{q}(W_{p,q})^{\oplus g}.
\end{equation}
For $i = 1, \dots, g$, denote by $\sigma_{i}$ and $\rho_{i}$ the elements of $\pi_{p}(W^{g}_{p,q})$ and $\pi_{q}(W^{g}_{p,q})$ which correspond to the elements $\sigma$ and $\rho$ coming from the $i$th direct summand under the isomorphisms (\ref{eq: direct sum decomp}). The boundary connect sum decomposition $W^{g}_{p,q} \cong (W_{p,q})^{\natural g}$ together with Proposition \ref{lemma: properties of the invariants} yields:
\begin{proposition} \label{eq: values of the invariants 2} The maps $\lambda_{p,q}$, $\mu_{q}$, $\alpha_{q}$, and $\alpha_{p}$ take the following values on the manifold $W^{g}_{p,q}$. For all $z \in \pi_{q}(S^{p})$ and $i, j = 1, \dots, g$:
$$\xymatrix@C-.5pc@R-1.5pc{
\lambda_{p,q}(\sigma_{i}, \rho_{j}) = \delta_{i,j},  & \mu_{q}(\tau_{p,q}(\sigma_{i}, z), \; \rho_{j}) = \delta_{i,j}\cdot z, & \mu_{q}(\rho_{i}, \rho_{j}) = 0, \\
\alpha_{p}(\sigma_{i}) = 0, &\alpha_{q}(\rho_{i}) = 0, &\alpha_{q}(\tau_{p, q}(\sigma_{i}, z)) = 0.
}
$$
\end{proposition}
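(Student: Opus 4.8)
The plan is to deduce every assertion from the boundary-connected-sum decomposition $W^{g}_{p,q} \cong (W_{p,q})^{\natural g}$, the values of the invariants on a single copy of $W_{p,q}$ recorded just above, and the structural identities of Lemma \ref{lemma: properties of the invariants}. First I would note that, since $p < q < 2p - 2$, we have $q - p + 1 < p - 1 = \kappa(W^{g}_{p,q})$, so the pair $(p,q)$ satisfies (\ref{eq: primary inequality}) for $W^{g}_{p,q}$ and all of $\tau_{p,q}, \lambda_{p,q}, \mu_{q}, \alpha_{p}, \alpha_{q}$ are defined on its homotopy groups. The decomposition provides, for each $i = 1, \dots, g$, an embedding $\iota_{i}: W_{p,q} \hookrightarrow W^{g}_{p,q}$ onto the $i$-th summand with $(\iota_{i})_{*}(\sigma) = \sigma_{i}$ and $(\iota_{i})_{*}(\rho) = \rho_{i}$; moreover these $g$ embeddings may be chosen with pairwise disjoint images away from a collar of the boundary (the copies of $W_{p,q}$ in a boundary connected sum meet only along arcs in the boundary), so in particular the spheres representing $\sigma_{i}, \rho_{i}$ are disjoint from those representing $\sigma_{j}, \rho_{j}$ when $i \neq j$.

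Next I would handle the single-summand values. Each $\iota_{i}$ is a smooth embedding and $\kappa(W_{p,q}) = p - 1 > q - p + 1$, so Proposition \ref{prop: functoriality of the structure} applies and shows that $\iota_{i}$ preserves all values of the invariants. Combined with $\alpha_{p}(\sigma) = 0$, $\alpha_{q}(\rho) = 0$, $\lambda_{p,q}(\sigma,\rho) = 1$ on $W_{p,q}$, this gives $\alpha_{p}(\sigma_{i}) = 0$, $\alpha_{q}(\rho_{i}) = 0$, and $\lambda_{p,q}(\sigma_{i},\rho_{i}) = 1$. Then $\mu_{q}(\rho_{i},\rho_{i}) = \bar{\pi}_{q}(\alpha_{q}(\rho_{i})) = \bar{\pi}_{q}(0) = 0$ by Lemma \ref{lemma: properties of the invariants}(v), and $\alpha_{q}(\tau_{p,q}(\sigma_{i},z)) = F_{p,q}(\alpha_{p}(\sigma_{i}),z) = F_{p,q}(0,z) = 0$ by Lemma \ref{lemma: properties of the invariants}(vi) together with the bilinearity of $F_{p,q}$ from Proposition \ref{prop: bilinearity of F}.

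For the cross terms I would invoke disjointness. When $i \neq j$, representatives of $\sigma_{i}$ and $\rho_{j}$ have disjoint images, so their signed intersection number vanishes and hence $\lambda_{p,q}(\sigma_{i},\rho_{j}) = 0$; together with $\lambda_{p,q}(\sigma_{i},\rho_{i}) = 1$ this yields $\lambda_{p,q}(\sigma_{i},\rho_{j}) = \delta_{i,j}$. Likewise, for $i \neq j$ the classes $\rho_{i}$ and $\rho_{j}$ admit disjoint embedded representatives; running Construction \ref{construction: pontryagin thom} with such representatives (so that the collapse map is constant on the relevant sphere), or alternatively applying Proposition \ref{prop: Generalized Whitney Trick} to the already-disjoint embeddings, gives $\mu_{q}(\rho_{i},\rho_{j}) = 0$, and combined with the case $i = j$ above we get $\mu_{q}(\rho_{i},\rho_{j}) = 0$ for all $i, j$. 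Finally, Lemma \ref{lemma: properties of the invariants}(ii) gives $\mu_{q}(\tau_{p,q}(\sigma_{i},z),\rho_{j}) = \lambda_{p,q}(\sigma_{i},\rho_{j}) \cdot z = \delta_{i,j}\cdot z$ in one stroke, covering both the diagonal and off-diagonal cases.

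The only point that is genuine content rather than bookkeeping is the geometric claim that the $g$ copies of $W_{p,q}$ sitting inside $(W_{p,q})^{\natural g}$ can be arranged with pairwise disjoint cores and representing spheres, together with the verification that (\ref{eq: primary inequality}) holds for $W^{g}_{p,q}$ so that the invariants are defined there and Proposition \ref{prop: functoriality of the structure} is applicable. Granting these, every value in the statement is a formal consequence of Lemma \ref{lemma: properties of the invariants}, disjointness, and naturality, with no further computation required.
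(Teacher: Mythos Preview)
Your proof is correct and follows exactly the approach the paper indicates: the paper simply says the proposition ``yields'' from the boundary connected sum decomposition $W^{g}_{p,q} \cong (W_{p,q})^{\natural g}$ together with Lemma \ref{lemma: properties of the invariants}, and your argument is a careful unpacking of precisely that, using Proposition \ref{prop: functoriality of the structure} for naturality under the summand inclusions and disjointness of the cores for the off-diagonal terms.
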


\section{Wall Forms} \label{section: Algebra}
We now formalize the algebraic structures studied in the previous section and give an axiomatic development of the \textit{Wall form} introduced in the introduction. 
\subsection{Abelian $H$-Pairs} \label{Abelian H Pairs}
We first must describe the 
category underlying all of our constructions. 
Fix a finitely generated Abelian group $H$. An object $\mb{M}$ in the category $\Ab^{2}_{H}$ is defined to be a pair of abelian groups $(\mb{M}_{-}, \mb{M}_{+})$ equipped with a bilinear map, 
$\tau: \mb{M}_{-}\otimes H \longrightarrow \mb{M}_{+}.$
We call such an object an \textit{$H$-pair} and refer to $\Ab^{2}_{H}$ as the category of $H$-pairs. We call the abelian groups $\mb{M}_{\pm}$ the \textit{component} groups of $\mb{M}$.
A morphism $f: \mb{M} \rightarrow \mb{N}$ of $H$-pairs 
 is defined to be a pair of group homomorphisms 
 $$f_{-}: \mb{M}_{-} \longrightarrow \mb{N}_{-}, \quad f_{+}: \mb{M}_{+} \longrightarrow \mb{N}_{+}$$ 
which make the diagram,
\begin{equation} \label{eq: morphism diagram}
\xymatrix{
\mb{M}_{-}\otimes H \ar[rr]^{f_{-}\otimes Id_{H}} \ar[d]^{\tau_{\mb{M}}} && \mb{N}_{-}\otimes H \ar[d]^{\tau_{\mb{N}}} \\
\mb{M}_{+} \ar[rr]^{f_{+}} && \mb{N}_{+}
}
\end{equation}
commute. We will refer to morphisms in $\Ab^{2}_{H}$ as \textit{$H$-maps}.
\begin{notation}
We will always denote the bilinear map associated to an $H$-pair by $\tau$. If more than one $H$-pair is present, say $\mb{M}$ and $\mb{N}$, we will decorate the maps with the subscripts $\mb{M}$, $\mb{N}$, i.e.\ $\tau_{\mb{M}}$ and $\tau_{\mb{N}}$ as in diagram (\ref{eq: morphism diagram}), so that there is no ambiguity. 
\end{notation}

 The category $\Ab^{2}_{H}$ is easily seen to be an additive category. Kernels, co-kernels, direct-sums, and direct-products are all formed component-wise. Direct-sums are equal to direct-products when taken over finite families of objects and thus the operation of binary direct-sum yields a bi-product. It is also immediate that for any two $H$-pairs $\mb{M}$ and $\mb{N}$, $\Hom_{\Ab_{H}^{2}}(\mb{M}, \mb{N})$ has the structure of an abelian group via pointwise addition of $H$-maps. 
 
 An $H$-pair $\mb{N}$ is said to be a sub-$H$-pair of $\mb{M}$ if $\mb{N}_{\pm} \leq \mb{M}_{\pm}$ are sub-groups and $\tau_{\mb{N}}$  is equal to the restriction of $\tau_{\mb{M}}$ to $\mb{N}_{-}\otimes H$. In this case we denote $\mb{N} \leq \mb{M}$. If $\mb{N}^{1}, \mb{N}^{2} \leq \mb{M}$ are both sub-$H$-pairs then we denote by $\mb{N}^{1}\cap\mb{N}^{2}$ the sub-$H$-pair of $\mb{M}$ defined by setting $(\mb{N}^{1}\cap\mb{N}^{2})_{\pm} := \mb{N}^{1}_{\pm}\cap\mb{N}^{2}_{\pm}$.
 
There are two basic $H$-pairs that we will use to ``probe'' other $H$-pairs. We define $\mathbf{P}^{(0)} \in \Ob(\Ab_{H}^{2})$ by setting
\begin{equation} \label{eq: Z-0}
\PH{0}_{-} := 0, \quad \PH{0}_{+} := \Z, \quad \tau = 0,
\end{equation}
and we define $\PH{1} \in \Ob(\Ab_{H}^{2})$ by setting
\begin{equation} \label{eq: Z-1}
\PH{1}_{-} := \Z, \quad \PH{1}_{+} := H, \quad \tau(t, h) = t\cdot h.
\end{equation}
It is easy to verify that $\PH{0}$ and $\PH{1}$ are \textit{projective} objects in $\Ab_{H}^{2}$ and that every finitely generated projective $H$-pair is a direct sum of copies of $\PH{0}$ and $\PH{1}$, although we will not explicitly need this fact.

The following general proposition about $H$-pairs will be useful to us latter on. 
\begin{proposition} \label{prop: direct sum iso}
Let $\mb{M}$ and $\mb{N}$ be $H$-pairs. 
Suppose that there is an isomorphism of $H$-pairs, $\mb{M} \cong \mb{M}\oplus\mb{N}$ and that $\mb{M}$ is finitely generated. 
Then $\mb{N} = \mb{0}$.
\end{proposition}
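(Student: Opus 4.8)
The plan is to exploit finite generation to pass to a numerical invariant that must be preserved by the isomorphism $\mb{M} \cong \mb{M}\oplus\mb{N}$ and that is strictly increased by a nontrivial summand. The obvious candidate is the generating set length. Concretely, for an $H$-pair $\mb{M}$ define $d(\mb{M})$ to be the minimal cardinality of a set of morphisms from the projective probes that generates $\mb{M}$; equivalently, since $\PH{0}$ and $\PH{1}$ are projective and every finitely generated projective $H$-pair is a sum of copies of them, $d(\mb{M})$ is the least $k$ such that there is an epimorphism $\mathbf{Q} \twoheadrightarrow \mb{M}$ with $\mathbf{Q}$ a direct sum of $k$ copies of $\PH{0}$'s and $\PH{1}$'s. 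First I would record the elementary facts that $d$ is finite on finitely generated objects, that $d(\mb{A}\oplus\mb{B}) \leq d(\mb{A}) + d(\mb{B})$ always, and — this is the crux — that $d(\mb{A}\oplus\mb{B}) \geq d(\mb{A}) + \varepsilon$ where $\varepsilon = 1$ if $\mb{B} \neq \mb{0}$, i.e.\ adding any nonzero summand strictly increases $d$.

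Granting that strict subadditivity-on-the-nose statement, the proof is immediate: applying $d$ to the isomorphism $\mb{M}\cong\mb{M}\oplus\mb{N}$ gives $d(\mb{M}) = d(\mb{M}\oplus\mb{N})$, which together with $d(\mb{M}\oplus\mb{N}) \geq d(\mb{M}) + 1$ whenever $\mb{N}\neq\mb{0}$ forces $\mb{N} = \mb{0}$. Here finite generation of $\mb{M}$ is used exactly to guarantee $d(\mb{M}) < \infty$, so that cancelling $d(\mb{M})$ from both sides of the inequality is legitimate.

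The main obstacle is establishing the strict inequality $d(\mb{M}\oplus\mb{N}) > d(\mb{M})$ for $\mb{N}\neq\mb{0}$; unlike the trivial upper bound, this is not formal and genuinely uses structure of $\Ab^2_H$. I would argue as follows. Suppose for contradiction that $d(\mb{M}\oplus\mb{N}) = d(\mb{M}) =: k$ and pick a minimal presentation $\pi: \mathbf{Q} \twoheadrightarrow \mb{M}\oplus\mb{N}$ with $\mathbf{Q}$ a sum of $k$ probes. Composing with the projection $\mb{M}\oplus\mb{N}\twoheadrightarrow\mb{M}$ gives an epimorphism $\mathbf{Q}\twoheadrightarrow\mb{M}$; since $\mathbf{Q}$ is projective, the identity of $\mb{M}$ lifts through it, so $\mb{M}$ is a retract of $\mathbf{Q}$, hence $\mathbf{Q}\cong\mb{M}'\oplus\mathbf{K}$ for some complement, with $\mb{M}'\cong\mb{M}$ a quotient that is itself "$k$-generated in a minimal way." The content is then that a $k$-generator object cannot be a proper retract of a rank-$k$ projective unless the complement $\mathbf{K}$ is zero — and if $\mathbf{K}=\mb{0}$ then $\mathbf{Q}\cong\mb{M}$ is projective and the surjection $\mathbf{Q}\twoheadrightarrow\mb{N}$ obtained by projecting to the second factor would be split, exhibiting $\mb{N}$ as a retract of $\mb{M}$ while $\mb{M}\cong\mb{M}\oplus\mb{N}$, and iterating (or a direct rank count at the level of the underlying pair of abelian groups, comparing minimal numbers of generators of $\mb{M}_{\pm}$ versus $(\mb{M}\oplus\mb{N})_{\pm}$) yields the contradiction. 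In fact I expect the cleanest route bypasses projective probes entirely: define $d(\mb{M}) = d(\mb{M}_{-}) + d(\mb{M}_{+}/\tau(\mb{M}_{-}\otimes H))$ using the generating set length $d$ of abelian groups from the introduction, note this is additive on direct sums of $H$-pairs because both $\mb{M}_{-}$ and the quotient $\mb{M}_{+}/\mathrm{im}(\tau)$ split as direct sums accordingly, and then invoke for ordinary finitely generated abelian groups the fact that $A\cong A\oplus B$ with $A$ finitely generated forces $B=0$ (apply $\otimes\Q$ for the free rank, and reduce mod a prime or use the classification for torsion). Then additivity alone — no strict inequality needed separately — plus the abelian-group case applied to $\mb{M}_{-}$ and to $\mb{M}_{+}/\mathrm{im}(\tau)$ gives $\mb{N}_{-}=0$ and $\mb{N}_{+}/\mathrm{im}(\tau_{\mb{N}})=0$; combined with $\mb{N}_{-}=0$ the latter gives $\mb{N}_{+}=0$, so $\mb{N}=\mb{0}$. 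I would present this second argument, as it is self-contained modulo the standard abelian-group cancellation fact.
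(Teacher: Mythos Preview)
Your second argument---the one you commit to presenting---is correct and is essentially the paper's proof: the paper also applies the finitely-generated abelian-group cancellation fact first to $\mb{M}_{-}\cong\mb{M}_{-}\oplus\mb{N}_{-}$ to get $\mb{N}_{-}=0$, then to $\cokernel(\tau_{\mb{M}})\cong\cokernel(\tau_{\mb{M}})\oplus\cokernel(\tau_{\mb{N}})$ (using that cokernels split over direct sums of $H$-pairs) to get $\mb{N}_{+}=\cokernel(\tau_{\mb{N}})=0$. The numerical invariant $d(\mb{M})=d(\mb{M}_{-})+d(\mb{M}_{+}/\mathrm{im}(\tau))$ is unnecessary packaging---you never actually use it once you apply the abelian-group fact to each piece separately---and your first approach via projective probes is a detour you were right to abandon.
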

\begin{proof}
From the isomorphism $\mb{M} \cong \mb{M}\oplus\mb{N}$, we have an isomorphism of abelian groups, $\mb{M}_{-} \cong \mb{M}_{-}\oplus\mb{N}_{-}$. 
Since $\mb{M}_{-}$ is a finitely generated abelian group, it follows that $\mb{N}_{-} = 0$. 
Since $\mb{N}_{-} = 0$, it follows that $\kernel(\tau_{\mb{N}}) = 0$ and $\cokernel(\tau_{\mb{N}}) = \mb{N}_{+}$. 
Now, notice that if $\mb{M} = \mb{A}\oplus\mb{B}$ is a direct-sum decomposition of $H$-pairs, then there are abelian group isomorphisms,
$$
\kernel(\tau_{\mb{M}}) \cong \kernel(\tau_{\mb{A}})\oplus \kernel(\tau_{\mb{B}}) \quad \text{and} \quad \cokernel(\tau_{\mb{M}}) \cong \cokernel(\tau_{\mb{A}})\oplus\cokernel(\tau_{\mb{B}}).
$$
 Applying the above isomorphisms to the direct-sum decomposition $\mb{M} = \mb{M}\oplus\mb{N}$, using the fact that $\kernel(\tau_{\mb{N}}) = 0$ and $\cokernel(\tau_{\mb{N}}) = \mb{N}_{+}$,  we obtain an abelian group isomorphism, 
 $$\cokernel(\tau_{\mb{M}}) \cong \cokernel(\tau_{\mb{M}})\oplus\mb{N}_{+}.$$
 Since $\cokernel(\tau_{\mb{M}}) \leq \mb{M}_{+}$ is a finitely generated abelian group, it follows that $\mb{N}_{+} = 0$. 
 This concludes the proof. 
\end{proof}

\subsection{Wall forms} \label{subsection: Wall forms} We fix once and for all a finitely generated abelian group $H$. 
All of our constructions will take place in the category $\Ab^{2}_{H}$. 
Let $\mb{G}$ be a finitely generated abelian $H$-pair equipped with group-homomorphisms,
$\partial: H \longrightarrow \mb{G}_{+}$ and $\pi: \mb{G}_{+} \longrightarrow H.$
Then, let $\epsilon = \pm 1$.
We call such a $4$-tuple $(\mb{G}, \partial, \pi, \epsilon)$ a \textit{form-parameter} for the category $\Ab^{2}_{H}$. Now, fix a form-parameter $(\mb{G}, \partial, \pi, \epsilon)$ 
and let $\mb{M}$ be a finitely generated $H$-pair.
Consider the following:
\begin{itemize} \itemsep8pt
\item A bilinear map, $\lambda: \mb{M}_{-}\otimes \mb{M}_{+} \longrightarrow \Z$.
\item An $\epsilon$-symmetric bilinear form, $\mu: \mb{M}_{+}\otimes\mb{M}_{+} \longrightarrow H$.
\item Functions, $\alpha_{\pm}: \mb{M}_{\pm} \longrightarrow \mb{G}_{\pm}$.
\end{itemize}
\begin{defn} \label{defn: Wall-form} The $5$-tuple $(\mb{M}, \lambda, \mu, \alpha_{-}, \alpha_{+})$ is said to be a \textit{Wall form} with parameter $(\mb{G}, \partial, \pi, \epsilon)$ if the following conditions are satisfied.
For all $x, \; x' \in \mb{M}_{-}$, \; $y, \; y' \in \mb{M}_{+}$, and $h \in H$:
\begin{enumerate} \itemsep8pt
\item[i.] $\lambda(x, \tau_{\mb{M}}(x', h)) = 0$, 
\item[ii.] 
$\mu(\tau_{\mb{M}}(x, h), y) \; = \; \lambda(x, y)\cdot h, \; $
\item[iii.]
$\alpha_{-}(x + x') = \alpha_{-}(x) + \alpha_{-}(x'),$
\item[iv.] 
$\alpha_{+}(y + y') = \alpha_{+}(y) + \alpha_{+}(y') + \partial(\mu(y, y')), $
\item[v.]
$\mu(y, y) \; = \; \pi(\alpha_{+}(y)),$
\item[vi.] 
$\alpha_{+}(\tau_{\mb{M}}(x, h)) \; = \; \tau_{\mb{G}}(\alpha_{-}(x), h).$
\end{enumerate}
\end{defn}

Note that condition iii. implies that $\alpha_{-}$ is a group homomorphism. Condition iv.\  however implies that $\alpha_{+}$ is in general not a homomorphism and so $(\alpha_{-}, \alpha_{+})$ is not an $H$-map. 

\begin{notation} We will often denote a Wall form by its underlying $H$-pair, i.e.\ $\mathbf{M} := (\mb{M}, \lambda, \mu, \alpha_{-}, \alpha_{+})$. We will always use the same notation to denote the associated maps, $\lambda, \mu, \alpha_{-}, \alpha_{+}$. If another Wall form is present, say $\mathbf{N}$, we will decorate the associated maps with subscripts and denote, $\lambda_{\mb{N}}, \mu_{\mb{N}}, \alpha_{\mb{N}  -}, \alpha_{\mb{N}  +}$ so that there is no ambiguity. 
  \end{notation}

\begin{defn}
A \textit{morphism} between Wall forms (with the same form-parameter) is an  $H$-map $f: \mb{M} \longrightarrow \mb{N}$ 
 that preserves all values of $\lambda$, $\mu$, and $\alpha_{\pm}$.  
 \end{defn}
 
 \begin{remark}
 It will be important to distinguish between maps of $H$-pairs and morphisms of Wall forms. 
 We will always refer to maps in the category $\Ab_{H}^{2}$ as \textit{$H$-maps} and will reserve the word \textit{morphism} for morphisms of Wall forms. We will only consider morphisms between Wall forms with the same form-parameter.
\end{remark}

If  $(\mb{M}, \lambda, \mu, \alpha_{-}, \alpha_{+})$ is a Wall form and $\mb{N} \leq \mb{M}$ is a sub-$H$-pair, then the Wall form structure on $\mb{M}$ induces a unique Wall form structure on $\mb{N}$ by restricting the maps $\lambda, \mu, \alpha_{-}, \alpha_{+}$ to $\mb{N}$. All of the required conditions from Definition \ref{defn: Wall-form} automatically hold. With this induced Wall form structure, the inclusion $H$-map $\mb{N} \hookrightarrow \mb{M}$ is a morphism of Wall forms. In this way we are justified in calling $\mb{N}$ a \textit{sub-Wall form} of $\mb{M}$. 
 
 Recall from Section \ref{Abelian H Pairs} the $H$-pairs $\PH{0}$ and $\PH{1}$. For any Wall form $\mb{M}$ there are ``duality'' homomorphisms,
\begin{equation} \label{eq: duality maps}
\xymatrix{
T^{(0)}: \mb{M}_{-} \longrightarrow \Hom_{\Ab_{H}^{2}}(\mb{M}, \PH{0}), \quad T^{(1)}: \mb{M}_{+} \longrightarrow \Hom_{\Ab_{H}^{2}}(\mb{M}, \PH{1})
}
\end{equation}
defined as follows. For $v \in \mb{M}_{-}$ we set,
$$T^{(0)}_{-}(v)(x) = 0 \in \mb{0} = \PH{0}_{-},  \quad T^{(0)}_{+}(v)(y) = \lambda(v, y) \in \Z = \PH{0}_{+}, \quad \text{for $x \in \mb{M}_{-}$, $y \in \mb{M}_{+}$.}$$ 
Then, for $w \in \mb{M}_{+}$ we set,
$$T^{(1)}_{-}(w)(x) = \lambda(x, w) = \Z\ \in \PH{1}_{-},  \quad T^{(1)}_{+}(w)(y) = \mu(y, w) \in H = \PH{1}_{+}, \quad \text{for $x \in \mb{M}_{-}$, $y \in \mb{M}_{+}$.}$$ 

\begin{defn} \label{defn: non-singular} A Wall form $\mb{M}$ is said to be \textit{non-singular} if the duality homomorphisms $T^{(0)}$ and $T^{(1)}$ from (\ref{eq: duality maps}) are both isomorphisms. 
\end{defn}

We will need notation for orthogonal complements. 
Let $\mb{N} \leq \mb{M}$. 
We define a new sub-Wall form $\mb{N}^{\perp} \leq \mb{M}$ by setting:
\begin{equation} \label{eq: orthog comp}
\begin{aligned}
\mb{N}^{\perp}_{-} \; &:= \; \{\; x \in \mb{M}_{-} \; | \; \lambda(x, w) = 0 \; \text{for all $w \in \mb{N}_{+}$} \},\\
\mb{N}^{\perp}_{+} \; &:= \: \{\; y \in \mb{M}_{+} \; | \; \lambda(v, y) = 0 \; \text{and} \; \mu(y, w) = 0 \; \text{for all $v \in \mb{N}_{-}$,  $w \in \mb{N}_{+}$}\}.
\end{aligned}
\end{equation}
It can be easily checked that $\tau(\mb{N}^{\perp}_{-}\otimes H) \leq \mb{N}^{\perp}_{+}$ and thus $\mb{N}^{\perp}$ actually is a sub-$H$-pair of $\mb{M}$. 
We call $\mb{N}^{\perp}$ the \textit{orthogonal complement} to $\mb{N}$ in $\mb{M}$. 

\begin{defn} \label{defn: orthogonal sub-wall forms}
Two sub-Wall forms $\mb{N}, \mb{N}' \leq \mb{M}$ are said to be \textit{orthogonal} if 
the following two conditions are met:
\begin{enumerate}
\item[i.]  $\mb{N}\cap\mb{N}' = \mb{0}$,
\item[ii.] $\mb{N} \leq (\mb{N}')^{\perp}$  and $\mb{N}' \leq \mb{N}^{\perp}$.
\end{enumerate}
\end{defn}

\begin{proposition} \label{prop: orthog direct sum}
Let $\mb{N}$ and $\mb{M}$ be Wall forms. There is a unique Wall form structure on the $H$-pair $\mb{N}\oplus\mb{M}$ such that the natural inclusions $\mb{N}, \mb{M} \hookrightarrow \mb{N}\oplus\mb{M}$ are morphisms and such that $\mb{N}, \mb{M} \leq \mb{N}\oplus\mb{M}$ are orthogonal as sub-Wall forms. 
\end{proposition}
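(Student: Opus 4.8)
The plan is to define the Wall form structure on $\mb{N}\oplus\mb{M}$ in the only way consistent with the stated requirements, and then verify that it works. Since the inclusions $\mb{N}, \mb{M} \hookrightarrow \mb{N}\oplus\mb{M}$ must be morphisms of Wall forms, the values of $\lambda$, $\mu$, $\alpha_{\pm}$ on $\mb{N}\oplus\mb{N}$ and on $\mb{M}\oplus\mb{M}$ blocks are forced to agree with $\lambda_{\mb{N}}, \mu_{\mb{N}}, \alpha_{\mb{N}\pm}$ and $\lambda_{\mb{M}}, \mu_{\mb{M}}, \alpha_{\mb{M}\pm}$ respectively. The orthogonality condition (Definition \ref{defn: orthogonal sub-wall forms}) forces all cross terms of $\lambda$ and $\mu$ to vanish. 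This determines $\lambda$ and $\mu$ completely on $\mb{N}\oplus\mb{M}$: for $(x, x') \in \mb{N}_{-}\oplus\mb{M}_{-}$ and $(y, y') \in \mb{N}_{+}\oplus\mb{M}_{+}$ we must set $\lambda\big((x,x'),(y,y')\big) = \lambda_{\mb{N}}(x,y) + \lambda_{\mb{M}}(x',y')$ and $\mu\big((y_1,y_1'),(y_2,y_2')\big) = \mu_{\mb{N}}(y_1,y_2) + \mu_{\mb{M}}(y_1',y_2')$. For $\alpha_{-}$, condition iii.\ of Definition \ref{defn: Wall-form} (additivity) together with the morphism condition forces $\alpha_{-}(x,x') = \alpha_{\mb{N}-}(x) + \alpha_{\mb{M}-}(x')$. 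For $\alpha_{+}$ the same reasoning using condition iv.\ and the vanishing of the cross term $\mu\big((y,0),(0,y')\big) = 0$ forces $\alpha_{+}(y,y') = \alpha_{\mb{N}+}(y) + \alpha_{\mb{M}+}(y')$.

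First I would write down these formulas explicitly as the \emph{definition} of the candidate structure, and observe that $\lambda$ and $\mu$ are bilinear (being sums of bilinear maps on the summands), that $\mu$ is $\epsilon$-symmetric (inherited summand-wise), and that $\alpha_{-}$ is a homomorphism. Then I would verify the six axioms of Definition \ref{defn: Wall-form} one at a time. Each axiom is an identity that is additive/bilinear in its arguments and splits as a sum of the corresponding identity on $\mb{N}$ and on $\mb{M}$, using that $\tau_{\mb{N}\oplus\mb{M}} = \tau_{\mb{N}}\oplus\tau_{\mb{M}}$ is computed component-wise in $\Ab^2_H$ (direct sums are formed component-wise, as noted in Section \ref{Abelian H Pairs}), and that all mixed $\lambda$- and $\mu$-terms vanish by construction. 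For instance, axiom iv.\ reads $\alpha_{+}\big((y_1,y_1')+(y_2,y_2')\big) = \alpha_{\mb{N}+}(y_1+y_2) + \alpha_{\mb{M}+}(y_1'+y_2')$, which by axiom iv.\ for $\mb{N}$ and $\mb{M}$ equals $\alpha_{\mb{N}+}(y_1)+\alpha_{\mb{N}+}(y_2)+\partial(\mu_{\mb{N}}(y_1,y_2)) + \alpha_{\mb{M}+}(y_1')+\alpha_{\mb{M}+}(y_2')+\partial(\mu_{\mb{M}}(y_1',y_2'))$, and this is exactly $\alpha_{+}(y_1,y_1') + \alpha_{+}(y_2,y_2') + \partial\big(\mu((y_1,y_1'),(y_2,y_2'))\big)$ since $\partial$ is additive and the cross $\mu$-terms are zero. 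Axiom vi.\ uses additionally that $\tau_{\mb{G}}$ is bilinear, so that $\tau_{\mb{G}}\big(\alpha_{\mb{N}-}(x) + \alpha_{\mb{M}-}(x'), h\big) = \tau_{\mb{G}}(\alpha_{\mb{N}-}(x),h) + \tau_{\mb{G}}(\alpha_{\mb{M}-}(x'),h)$.

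Next I would check that with this structure the inclusions $\mb{N}, \mb{M} \hookrightarrow \mb{N}\oplus\mb{M}$ are indeed morphisms (immediate: they preserve $\lambda$, $\mu$, $\alpha_{\pm}$ by the defining formulas, since e.g.\ $\lambda\big((x,0),(y,0)\big) = \lambda_{\mb{N}}(x,y)$), and that $\mb{N}$ and $\mb{M}$, viewed as the obvious sub-$H$-pairs $\mb{N}\oplus\mb{0}$ and $\mb{0}\oplus\mb{M}$, are orthogonal: condition i.\ of Definition \ref{defn: orthogonal sub-wall forms} holds since $(\mb{N}\oplus\mb{0})\cap(\mb{0}\oplus\mb{M}) = \mb{0}$, and condition ii.\ holds because the cross terms of $\lambda$ and $\mu$ vanish, so each summand sits in the orthogonal complement of the other. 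Uniqueness then follows from the forcing discussion above: any Wall form structure making the inclusions morphisms must restrict correctly on the block diagonal, and the orthogonality requirement kills the off-diagonal $\lambda$- and $\mu$-terms, while the additivity axioms iii.\ and iv.\ pin down $\alpha_{\pm}$ on sums. I do not expect a genuine obstacle here; the only mild subtlety — which is really the only thing to be careful about — is that $\alpha_{+}$ is not additive on its own, so one must route its value on $(y,y') = (y,0)+(0,y')$ through axiom iv.\ and use $\mu\big((y,0),(0,y')\big) = 0$ rather than naively assuming additivity. Everything else is a routine component-wise splitting of identities.
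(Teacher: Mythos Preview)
Your proposal is correct and follows essentially the same approach as the paper's proof: define $\lambda$, $\mu$, $\alpha_{-}$ as direct sums and set $\alpha_{+}(y,y') = \alpha_{\mb{N}+}(y) + \alpha_{\mb{M}+}(y')$, then use the vanishing of the cross $\mu$-terms to verify axiom iv., with uniqueness coming from the fact that all the data is forced by the restrictions to the summands. Your write-up is in fact more thorough than the paper's, which omits the axiom-by-axiom check and simply asserts that the conditions of Definition~\ref{defn: Wall-form} hold.
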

\begin{proof}  
We define $\mu_{\mb{N}\oplus\mb{M}}, \lambda_{\mb{N}\oplus\mb{M}},$ and $(\alpha_{\mb{N}\oplus\mb{M}})_{-}$ by setting,
$$\mu_{\mb{N}\oplus\mb{M}} := \mu_{\mb{N}}\oplus\mu_{\mb{M}}, \quad \lambda_{\mb{N}\oplus\mb{M}} := \lambda_{\mb{N}}\oplus\lambda_{\mb{M}}, \quad \text{and} \quad (\alpha_{\mb{N}\oplus\mb{M}})_{-} := (\alpha_{\mb{N}})_{-}\oplus (\alpha_{\mb{M}})_{-}.$$
Since $\mu$ and $\lambda$ are bilinear and $\alpha_{-}$ is a homomorphism according to Definition \ref{defn: Wall-form},
it makes sense to form the direct-sums of maps as above and so the above formulas are well defined.
To define $(\alpha_{\mb{N}\oplus\mb{M}})_{+}$ (which will not be a homomorphism), we set
$$(\alpha_{\mb{N}\oplus\mb{M}})_{+}(x + y) = (\alpha_{\mb{N}})_{+}(x) + (\alpha_{\mb{M}})_{+}(y) \quad \text{for $x \in \mb{N}_{+}$ and $y \in \mb{M}_{+}$.}$$
Since $\mu_{\mb{N}\oplus\mb{M}}(x, y) = 0$ for $x \in \mb{N}_{+} \leq \mb{N}_{+}\oplus\mb{M}_{+}$ and $y \in \mb{M}_{+} \leq \mb{N}_{+}\oplus\mb{M}_{+}$, it follows that $(\alpha_{\mb{N}\oplus\mb{M}})_{+}$ satisfies all of the required conditions from Definition \ref{defn: Wall-form}, thus we have a Wall form structure on $\mb{N}\oplus\mb{M}$. 
It is immediate that the natural inclusions $\mb{N}, \mb{M} \hookrightarrow \mb{N}\oplus\mb{M}$ are morphisms and that $\mb{N}, \mb{M} \leq \mb{N}\oplus\mb{M}$ are orthogonal. 
Uniqueness follows from the fact that the maps $\mu_{\mb{N}\oplus\mb{M}}, \lambda_{\mb{N}\oplus\mb{M}},$ and $(\alpha_{\mb{N}\oplus\mb{M}})_{\pm}$ as defined above, are completely determined by their values on $\mb{N} \leq \mb{N}\oplus\mb{M}$ and $\mb{M} \leq \mb{N}\oplus\mb{M}$. 
\end{proof}
\begin{notation} \label{notation: orthogonal sum notation}  Given two Wall forms $\mb{N}$ and $\mb{M}$, the direct-sum $\mb{N}\oplus\mb{M}$ will always be assumed to have the Wall form structure described in Proposition \ref{prop: orthog direct sum}. 
If $\mb{N}, \mb{N}' \leq \mb{M}$ are sub-Wall forms which are orthogonal according to Definition \ref{defn: orthogonal sub-wall forms}, then we will denote by $\mb{N}\perp\mb{N}'$ the internal sum of the sub-Wall forms $\mb{N}$ and $\mb{N}'$ in $\mb{M}$.
We will continue to use the symbol $\oplus$ to denote the direct sums of $H$-pairs and abelian groups. 
The meaning of $\oplus$ will be made clear by the context in which it is used.
\end{notation}

\subsection{The Wall form Associated to a Manifold} \label{subsection: Wall-form of a manifold}
Wall forms will arise for us in the following way. Let $M$ be a compact, oriented, $m$-dimensional manifold. Let $p$ and $q$ be positive integers with $p + q = m$ such that $p < q < 2p - 2$ and $q - p - 1 < \kappa(M)$ is satisfied. 
We set $H = \pi_{q}(S^{p})$ and denote by, $\mathcal{W}_{p,q}(M)$ the $\pi_{q}(S^{p})$-pair given by the data,
$$\mathcal{W}_{p,q}(M)_{-} := \pi_{p}(M), \quad \mathcal{W}_{p,q}(M)_{+} := \pi_{q}(M), \quad \tau:= \tau_{p,q}: \pi_{p}(M)\otimes\pi_{q}(S^{p}) \longrightarrow \pi_{q}(M)$$
where $\tau_{p,q}: \pi_{p}(M)\times\pi_{q}(S^{p}) \longrightarrow \pi_{q}(M)$ is the bilinear map
from (\ref{eq: tau map}). We need to define a suitable form-parameter for $\Ab^{2}_{\pi_{q}(S^{p})}$. 
Denote by $\mathbf{G}_{p,q}$ the $\pi_{q}(S^{p})$-pair given by the data,
$$(\mathbf{G}_{p,q})_{-} := \pi_{p-\!1}(SO_{q}), \quad (\mathbf{G}_{p,q})_{+} := \pi_{q-\!1}(SO_{p}), \quad \tau := F_{p,q}: \pi_{p-\!1}(SO_{q})\otimes \pi_{q}(S^{p}) \longrightarrow \pi_{q-\!1}(SO_{p})$$
where 
$F_{p,q}: \pi_{p-1}(SO_{q})\otimes \pi_{q}(S^{p}) \longrightarrow \pi_{q-1}(SO_{p})$
is the bilinear map from (\ref{eq: bundle composition}). 
The $\pi_{q}(S^{p})$-pair $\mathbf{G}_{p,q}$ together with the maps
$\partial_{q}: \pi_{q}(S^{p}) \rightarrow \pi_{q-1}(SO_{p})$ and $\bar{\pi}_{q}: \pi_{q-1}(SO_{p}) \rightarrow \pi_{q}(S^{p})$
from (\ref{eq: form parameter maps}) make the $4$-tuple
$(\mb{G}_{p,q}, \; \partial_{q}, \; \pi_{q},\; (-1)^{q})$
into a form parameter for the category $\Ab_{\pi_{q}(S^{p})}^{2}$.
It then follows directly from Lemma \ref{lemma: properties of the invariants} that the $5$-tuple 
\begin{equation} \label{eq: Wall-form of M}
(\mathcal{W}_{p,q}(M), \; \lambda_{p, q}, \; \mu_{q}, \; \alpha_{p}, \; \alpha_{q})
\end{equation}
is a Wall form with form-parameter $(\mb{G}_{p,q}, \; \partial_{q}, \; \pi_{q}, (-1)^{q})$. We call the Wall form of (\ref{eq: Wall-form of M}) the \textit{Wall form of degree $(p,q)$ associated to $M$}. 

Let $N$ be another compact $m$-dimensional manifold with $\kappa(N) > q - p -1$.
Let $f: N \longrightarrow M$ be a smooth embedding. The map on homotopy groups induced by $f$ commutes with $\tau_{p,q}$ and thus $f$ induces an $\pi_{q}(S^{p})$-map which we denote by 
$f_{*}: \mathcal{W}_{p, q}(N) \longrightarrow \mathcal{W}_{p, q}(M).$
It follows directly from Proposition \ref{prop: functoriality of the structure} that the $\pi_{q}(S^{p})$-map $f_{*}$ is a morphism of Wall forms. 

\subsection{The Standard Wall Form.} \label{subsection: standard nonsingular wall form}
Fix a finitely generated Abelian group $H$. 
We first define an $H$-pair 
$\mb{W} := \PH{0}\oplus \PH{1}.$
For $g \in \N$ we denote by $\mb{W}^{g}$ the $g$-fold direct-sum $\mb{W}^{\oplus g}$. We let $\mb{W}$ denote the $H$-pair $\mb{W}^{1}$ and $\mb{W}^{0}$ is understood to be the zero $H$-pair $\mb{0}$. 
Recall,
$$\PH{0}_{-} = 0, \quad \PH{0}_{+} = \Z, \quad \PH{1}_{-} = \Z, \quad \PH{1}_{+} = H.$$
Fix elements 
$a \in \mb{W}_{-}$ and $b \in \mb{W}_{+}$
which correspond to $1 \in \PH{1}_{-} = \Z$ and $1 \in \PH{0}_{+} = \Z$ respectively. 
For $g \in \N$, we denote by 
\begin{equation} \label{eq: standard generators}
 a_{i} \in \mb{W}^{g}_{-} \quad \text{and} \quad b_{i} \in \mb{W}^{g}_{+} \quad \text{for $i = 1, \dots, g$} 
 \end{equation}
the elements  that correspond to the elements $a$ and $b$ coming from the $i$th direct-summand of $\mb{W}$ in $\mb{W}^{g}$.
Now fix a form-parameter $(\mb{G}, \partial, \pi, \epsilon)$. We endow $\mb{W}^{g}$ with the structure of a Wall form with parameter $(\mb{G}, \partial, \pi, \epsilon)$ by setting:
\begin{equation} \label{eq: standard Wall-form maps}
\lambda(a_{i}, b_{j}) = \delta_{i,j}, \quad \mu(b_{i}, b_{j}) = 0, \quad \alpha_{-}(a_{i}) = 0, \quad \alpha_{+}(b_{i}) = 0 \quad \text{for $i, j = 1, \dots g$.}
\end{equation}
These values together with the conditions imposed from Definition \ref{defn: Wall-form} determine the maps $\lambda, \mu,$ and $\alpha_{\pm}$ completely, thus  $(\mb{W}^{g}, \lambda, \mu, \alpha_{-}, \alpha_{+})$
is a Wall form with parameter $(\mb{G}, \partial, \pi, \epsilon)$. We call this the \textit{standard Wall form of rank $g$} with parameter $(\mb{G}, \pi, \partial, \epsilon)$. 
\begin{remark} The standard Wall forms $\mb{W}^{g}$ are defined in the same way for any choice of form-parameter. Notice that the map $\alpha_{-}: \mb{W}_{-} \rightarrow \mb{G}_{-}$ is identically zero 
and $\alpha_{+}: \mb{W}_{+} \rightarrow \mb{G}_{+}$ is determined entirely by $\mu$. For this reason we do not include the form-parameter in the notation of $\mb{W}^{g}$. The relevant form-parameter will always be clear from the context. 
\end{remark}
\begin{proposition} \label{prop: non-singularity} The Wall form $\mb{W}^{g}$ is non-singular as defined in Definition \ref{defn: non-singular}.
\end{proposition}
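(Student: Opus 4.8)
The plan is to make the two Hom-groups appearing in (\ref{eq: duality maps}) completely explicit and then read off $T^{(0)}$ and $T^{(1)}$ on the standard generators of $\mb{W}^{g}$. Recall that $\mb{W}^{g}_{-} = \Z^{g}$ with $\Z$-basis $a_{1}, \dots, a_{g}$, and $\mb{W}^{g}_{+} = \Z^{g}\oplus H^{g}$, where the $i$-th $\Z$-summand is generated by $b_{i}$ and the $i$-th $H$-summand is the image of the injection $h \mapsto \tau(a_{i}, h)$ (here $\tau$ is the structure map of $\mb{W}^{g}$, which on the $i$-th $\PH{1}$-summand is the identity $\Z\otimes H \to H$). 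Thus every element of $\mb{W}^{g}_{+}$ is uniquely of the form $\sum_{i} m_{i}b_{i} + \sum_{i}\tau(a_{i}, k_{i})$ with $m_{i} \in \Z$ and $k_{i} \in H$. Since $\lambda$ and $\mu$ are bilinear, $T^{(0)}$ and $T^{(1)}$ are group homomorphisms, so it will suffice to evaluate them on these generators.

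First I would compute $\Hom_{\Ab_{H}^{2}}(\mb{W}^{g}, \PH{0})$. An $H$-map $f$ into $\PH{0}$ has $f_{-} = 0$ because $\PH{0}_{-} = 0$, and commutativity of (\ref{eq: morphism diagram}) together with $\tau_{\PH{0}} = 0$ forces $f_{+}$ to vanish on $\tau(\mb{W}^{g}_{-}\otimes H) = H^{g}$; hence $f_{+}$ factors through $\mb{W}^{g}_{+}/H^{g} \cong \Z^{g}$, and $\Hom_{\Ab_{H}^{2}}(\mb{W}^{g}, \PH{0})$ is free abelian of rank $g$ with basis the functionals $\varphi_{i}$ determined by $\varphi_{i}(b_{j}) = \delta_{i,j}$ and $\varphi_{i}(\tau(a_{j}, h)) = 0$. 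Using $\lambda(a_{i}, b_{j}) = \delta_{i,j}$ from (\ref{eq: standard Wall-form maps}) and condition i.\ of Definition \ref{defn: Wall-form}, which gives $\lambda(a_{i}, \tau(a_{j}, h)) = 0$, one gets $T^{(0)}(a_{i}) = \varphi_{i}$. So $T^{(0)}$ carries the basis $\{a_{i}\}$ onto the basis $\{\varphi_{i}\}$ and is an isomorphism.

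Next I would identify $\Hom_{\Ab_{H}^{2}}(\mb{W}^{g}, \PH{1})$. An $H$-map $f$ into $\PH{1}$ is pinned down by the integers $n_{i} := f_{-}(a_{i})$ and the elements $k_{i} := f_{+}(b_{i}) \in H$, since commutativity of (\ref{eq: morphism diagram}) forces $f_{+}(\tau(a_{i}, h)) = n_{i}\cdot h$, and conversely any such data defines an $H$-map; thus $\Hom_{\Ab_{H}^{2}}(\mb{W}^{g}, \PH{1}) \cong \Z^{g}\oplus H^{g}$ via $f \mapsto \big((n_{i})_{i}, (k_{i})_{i}\big)$. Evaluating $T^{(1)}$ on $w = b_{i}$ gives $f_{-}(a_{j}) = \lambda(a_{j}, b_{i}) = \delta_{i,j}$ and $f_{+}(b_{j}) = \mu(b_{j}, b_{i}) = 0$, so $T^{(1)}(b_{i})$ corresponds to $\big((\delta_{i,j})_{j}, 0\big)$. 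Evaluating $T^{(1)}$ on $w = \tau(a_{i}, h)$: condition i.\ gives $f_{-}(a_{j}) = \lambda(a_{j}, \tau(a_{i}, h)) = 0$, while the $\epsilon$-symmetry of $\mu$ and condition ii.\ give $f_{+}(b_{j}) = \mu(b_{j}, \tau(a_{i}, h)) = \epsilon\cdot\mu(\tau(a_{i}, h), b_{j}) = \epsilon\cdot\lambda(a_{i}, b_{j})\cdot h = \epsilon\,\delta_{i,j}\,h$, so $T^{(1)}(\tau(a_{i}, h))$ corresponds to $\big(0, (\epsilon\,\delta_{i,j}\,h)_{j}\big)$. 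Hence $T^{(1)}$ sends $\sum_{i} m_{i}b_{i} + \sum_{i}\tau(a_{i}, k_{i})$ to $\big((m_{i})_{i}, (\epsilon k_{i})_{i}\big)$; since $\epsilon = \pm 1$ this is a bijection, so $T^{(1)}$ is an isomorphism and $\mb{W}^{g}$ is non-singular.

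I do not expect a genuine obstacle here; the only points that need care are the bookkeeping of the commutativity square (\ref{eq: morphism diagram}) in the two Hom-computations and the sign $\epsilon$ appearing in $\mu(b_{j}, \tau(a_{i}, h))$, which is harmless since $\epsilon = \pm 1$ is a unit. As an alternative one could observe that $\mb{W}^{g}$ is an orthogonal direct sum of $g$ copies of $\mb{W} = \PH{0}\oplus\PH{1}$ and that $\Hom_{\Ab_{H}^{2}}(-, \PH{i})$ is additive, thereby reducing to the case $g = 1$.
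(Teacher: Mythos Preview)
Your proof is correct and follows essentially the same approach as the paper: an explicit identification of the Hom-groups and a direct computation of $T^{(0)}$ and $T^{(1)}$ on generators. The only organizational difference is that the paper first treats the case $g=1$ (showing surjectivity by exhibiting preimages and injectivity from $\lambda(a,b)=1$) and then invokes the additivity isomorphism $\Hom_{\Ab_{H}^{2}}(\mb{W}^{g},\PH{\nu})\cong\Hom_{\Ab_{H}^{2}}(\mb{W},\PH{\nu})^{\oplus g}$, which is precisely the alternative you mention at the end; you instead carry out the computation for general $g$ directly.
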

\begin{proof} We first prove that $\mb{W}$ is non-singular. 
We will show that the maps 
$$\xymatrix{
T^{(0)}: \mb{W}_{-} \longrightarrow \Hom_{\Ab_{H}^{2}}(\mb{W}, \PH{0}), \quad T^{(1)}: \mb{W}_{+} \longrightarrow \Hom_{\Ab_{H}^{2}}(\mb{W}, \PH{1}) }
$$
are isomorphisms. We prove surjectivity first. Let $f: \mb{W} \longrightarrow \PH{0}$ be an $H$-map. 
The $H$-map $f$ is determined entirely by where it sends the elements $a \in \mb{W}_{-}$ and $b \in \mb{W}_{+}$. Since $\PH{0}_{-} = \mb{0}$, $f_{-}(a)$ is automatically zero.
Let $t$ denote the element $f_{+}(b) \in \Z = \PH{0}_{+}$.
Since $\lambda(a, b) = 1$, it follows immediately from the definition of $T^{(0)}$ that $f = T^{(0)}(t\cdot a)$. 
This proves that $T^{(0)}$ is surjective. 

Now let $\varphi: \mb{W} \longrightarrow \PH{1}$ be an $H$-map. 
We let $s$ and $h$ denote the elements 
$$\varphi_{-}(a) \in \Z = \PH{1}_{-} \quad  \text{and} \quad \varphi_{+}(b) \in H = \PH{1}_{+}$$ 
respectively. 
We then let $w$ denote the sum, $s\cdot b + \epsilon\cdot\tau(a, h)$ (where $\epsilon$ is value $+1$ or $-1$ specified in the chosen 
 form-parameter). 
Using $\lambda(a, \tau(a, h)) = 0$ and $\mu(b, b) = 0$, we compute
$$ \lambda(a,\; w) = \lambda(a,\; s\cdot b) = s, $$
 and 
$$\mu(b, w)  = \mu(b,\; \epsilon\cdot\tau(a, h)) =  \epsilon\cdot\mu(\epsilon\cdot\tau(a, h),\; b) = \lambda(a,\; b)\cdot h = h.$$
It follows that $\varphi = T^{(1)}(w)$. This proves that $T^{(1)}$ is surjective. 
Injectivity of $T^{(0)}$ and $T^{(1)}$ follows immediately from the fact that $\lambda(a, b) = 1$ and $\mu(\tau(a, h), b) = 1$. 
This proves that $\mb{W}$ is non-singular. 
Non-singularity of $\mb{W}^{g}$ for $g \in \N$ follows from the canonical isomorphisms,
$$\xymatrix{
\Hom_{\Ab_{H}^{2}}(\mb{W}^{g}, \PH{\nu}) \cong \Hom_{\Ab_{H}^{2}}(\mb{W}, \PH{\nu})^{\oplus g} } \quad \text{for $\nu = 0, 1$.}$$
This concludes the proof of the proposition. 
\end{proof}
Recall from Notation \ref{notation: orthogonal sum notation} that $\mb{N}\perp\mb{N}'$ denotes the sum of orthogonal sub-Wall forms.
\begin{proposition} \label{prop: orthog comp} Let $f: \mb{W} \longrightarrow \mb{M}$ be a morphism of Wall forms. Then $f$ is split-injective and there is an orthogonal direct-sum decomposition, 
$f(\mb{W})^{\perp}\perp f(\mb{W}) = \mb{M}$
\end{proposition}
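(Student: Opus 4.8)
The plan is to exploit the non-singularity of $\mb{W}$ (Proposition \ref{prop: non-singularity}) together with the duality homomorphisms $T^{(0)}, T^{(1)}$ from (\ref{eq: duality maps}). The key point is that a morphism $f\colon \mb{W}\to\mb{M}$ is entirely determined by the two elements $f_-(a)\in\mb{M}_-$ and $f_+(b)\in\mb{M}_+$, and that $f$ being a morphism forces the relations $\lambda(f_-(a),f_+(b))=\lambda_{\mb{W}}(a,b)=1$ and $\mu(f_+(b),f_+(b))=\mu_{\mb{W}}(b,b)=0$, as well as $\alpha_-(f_-(a))=0$ and $\alpha_+(f_+(b))=0$. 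Write $v:=f_-(a)$ and $w:=f_+(b)$.

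First I would construct the splitting. Since $\lambda(v,w)=1$, the homomorphism $\lambda(-,w)\colon\mb{M}_-\to\Z$ is surjective and the homomorphism $\mu(-,w)\colon\mb{M}_+\to H$ hits the ``$\tau$-line'' generated by $w$: explicitly, for $x\in\mb{M}_-$ and $y\in\mb{M}_+$ one checks using Wall form axioms (i)--(ii) that the assignments
$$
r_-(x) = \lambda(x,w)\cdot a, \qquad r_+(y) = \mu(y,w)\cdot b' + \text{(correction term)}
$$
define an $H$-map $r\colon\mb{M}\to\mb{W}$ with $r\circ f = \mathrm{id}_{\mb{W}}$; here one mimics the computation in the proof of Proposition \ref{prop: non-singularity}, where $w$ plays the role that $s\cdot b+\epsilon\tau(a,h)$ played there. (One should verify $r$ respects $\tau$, which follows from axiom (ii) exactly as in that proof; that $r$ need not be a morphism of Wall forms is irrelevant, split-injectivity of $f$ as an $H$-map is what is claimed.) This gives $f$ split-injective and an $H$-pair decomposition $\mb{M}\cong f(\mb{W})\oplus\kernel(r)$.

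Next I would identify $\kernel(r)$ with $f(\mb{W})^\perp$. By construction $r$ is built from the pairings $\lambda(-,w)$ and $\mu(-,w)$ (and $\lambda(v,-)$), so $\kernel(r)$ consists precisely of those $(x,y)$ with $\lambda(x,w)=0$, $\lambda(v,y)=0$, and $\mu(y,w)=0$ — which, since $f(\mb{W})_-=\Z\langle v\rangle$ and $f(\mb{W})_+ = \Z\langle w\rangle\perp\tau(v,H)$ with $\lambda(v,\tau(v,h))=0$ by axiom (i) and $\mu(\tau(v,h),w)$ determined by $\lambda(v,w)=1$, is exactly $f(\mb{W})^\perp$ as defined in (\ref{eq: orthog comp}). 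It remains to check that the decomposition $f(\mb{W})^\perp \perp f(\mb{W}) = \mb{M}$ is orthogonal in the sense of Definition \ref{defn: orthogonal sub-wall forms}: condition (ii) is the definition of $f(\mb{W})^\perp$, and condition (i), that $f(\mb{W})\cap f(\mb{W})^\perp = \mb{0}$, follows because any element of the intersection is orthogonal to all of $f(\mb{W})$ yet $\lambda$ and $\mu$ are non-degenerate on $f(\mb{W})$ (again by $\lambda(v,w)=1$).

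The main obstacle I anticipate is getting the formula for $r_+$ exactly right: because $\alpha_+$ is not a homomorphism and $\mu$ restricted to $f(\mb{W})_+$ is not diagonal in the obvious basis (one has $\mu(\tau(v,h),w)=h$ but $\mu(w,w)=0$), the projection onto the $\Z\langle w\rangle$-summand versus the $\tau(v,H)$-summand requires care, and one must also confirm that $r_+$ lands in $\mb{W}_+ = \Z\oplus H$ consistently with $\tau$. Everything else is a routine translation of the non-singularity computation of Proposition \ref{prop: non-singularity} from $\mb{W}$ to its image in $\mb{M}$. The splitting over $\N$ (for $\mb{W}^g$, should that be needed elsewhere) then follows by the same additivity argument used at the end of that proof, but for the present statement only the rank-one case is required.
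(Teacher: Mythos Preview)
Your approach is correct and is essentially the paper's approach, but you are making it harder than necessary by failing to recognise that the retraction $r\colon\mb{M}\to\mb{W}$ you are trying to build by hand is nothing other than the $H$-map
\[
T^{(0)}(v)\oplus T^{(1)}(w)\colon \mb{M}\longrightarrow \PH{0}\oplus\PH{1}=\mb{W},
\]
already defined in (\ref{eq: duality maps}). Explicitly, $r_-(x)=\lambda(x,w)\cdot a$ and $r_+(y)=\lambda(v,y)\cdot b+\tau(a,\mu(y,w))$; there is no mysterious ``correction term'' and no undefined symbol $b'$. Since $T^{(0)}(v)$ and $T^{(1)}(w)$ are $H$-maps by construction, the ``main obstacle'' you anticipate---verifying that $r$ respects $\tau$---evaporates. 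Checking $r\circ f=\mathrm{id}_{\mb{W}}$ is then a two-line computation using $\lambda(v,w)=1$, $\mu(w,w)=0$, and axioms (i)--(ii), and the identification $\kernel(r)=f(\mb{W})^{\perp}$ is exactly the argument you sketch. The paper's proof is three sentences long for this reason.
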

\begin{proof}
Let $v$ and $w$ denote the elements $f_{-}(a) \in \mb{M}_{-}$ and $f_{+}(b) \in \mb{M}_{+}$ respectively. 
Consider the $H$-map,
$$ T^{(0)}(v)\oplus T^{(1)}(w): \mb{M} \; \longrightarrow \; \PH{0}\oplus\PH{1} = \mb{W}.$$
It follows from the definition of $T^{(0)}$ and $T^{(1)}$ that the kernel of $T^{(0)}(v)\oplus T^{(1)}(w)$ is $f(\mb{W})^{\perp}$. 
The morphism $f: \mb{W} \rightarrow \mb{M}$ (which is determined entirely by $a \mapsto v$ and $b \mapsto w$)
yields a section of the $H$-map $T^{(0)}(v)\oplus T^{(1)}(w)$. This section yields a splitting $f(\mb{W})\perp f(\mb{W})^{\perp} = \mb{M}$. It follows from the existence of this section that $f$ is injective. 
\end{proof}

We will study Wall forms by probing them by morphisms from $\mb{W}^{g}$. The next proposition is immediate from the definition of $\mb{W}^{g}$.
\begin{proposition} \label{prop: construction of morphisms}
Let $\mb{M}$ be a Wall form. Let $x_{1}, \dots, x_{k} \in \mb{M}_{-}$ and $y_{1}, \dots, y_{k} \in \mb{M}_{+}$ be elements such that,
$\lambda(x_{i}, y_{j}) = \delta_{i,j},$ $\mu(y_{i}, y_{j}) = 0,$ $\alpha_{-}(x_{i}) = 0,$ and $\alpha_{+}(y_{i}) = 0$
for $i, j = 1, \dots, k$.
Then there exists a unique morphism $f: \mb{W}^{k} \longrightarrow \mb{M}$ such that $f_{-}(a_{i}) = x_{i}$ and $f_{+}(b_{i}) = y_{i}$ for $i = 1, \dots, k$. 
\end{proposition}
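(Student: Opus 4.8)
The plan is to construct $f = (f_-, f_+)$ explicitly on a generating set of the $H$-pair $\mb{W}^k$ and then to verify the six conditions of Definition \ref{defn: Wall-form} one by one, each time reducing to the hypotheses on the $x_i$ and $y_j$ by bilinearity. First I would take $f_- : \mb{W}^k_- \to \mb{M}_-$ to be the group homomorphism with $f_-(a_i) = x_i$; this is well defined and unique since $\mb{W}^k_-$ is free abelian on $a_1, \dots, a_k$. Recalling that $\mb{W}^k_+ \cong \bigoplus_{i=1}^k\big(\Z\langle b_i\rangle \oplus H\big)$, where the $i$-th copy of $H$ is realized as $\{\tau(a_i,h) : h \in H\}$, I would define $f_+$ by $f_+(b_i) = y_i$ and $f_+(\tau(a_i,h)) = \tau_{\mb{M}}(x_i, h)$; the second formula is a homomorphism in $h$ because $\tau_{\mb{M}}$ is bilinear, so $f_+$ is a well-defined homomorphism, and the square (\ref{eq: morphism diagram}) then commutes on the generators $a_i \otimes h$, hence everywhere by additivity. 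Thus $(f_-, f_+)$ is an $H$-map, and it is the only $H$-map with the prescribed values on the $a_i$ and $b_i$, since $H$-linearity forces the value on each $\tau(a_i,h)$.

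Next I would check that $f$ preserves $\lambda$ and $\mu$. By bilinearity it suffices to test on the generators $a_i$, $b_j$, $\tau(a_i,h)$. Preservation of $\lambda$ on the pairs $(a_i, b_j)$ is the hypothesis $\lambda(x_i,y_j) = \delta_{i,j}$, and on the pairs $(a_i, \tau(a_j,h))$ it follows from condition (i) of Definition \ref{defn: Wall-form}, which holds identically on both sides. For $\mu$, the pair $(b_i,b_j)$ uses $\mu(y_i,y_j)=0$, the pair $(\tau(a_i,h), b_j)$ uses condition (ii) together with $\lambda(x_i,y_j) = \delta_{i,j}$, and the pair $(\tau(a_i,h),\tau(a_j,h'))$ uses conditions (ii) and (i); the remaining pairs follow from $\epsilon$-symmetry. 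Preservation of $\alpha_-$ is immediate: $\alpha_-$ is a homomorphism by condition (iii), and it vanishes on each $x_i$ by hypothesis and on each $a_i$ by (\ref{eq: standard Wall-form maps}).

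The one genuinely non-formal point --- and the step I expect to be the main obstacle --- is preservation of $\alpha_+$, which is not a homomorphism, so agreement on generators does not a priori suffice. Here the plan is to compare the two functions $\alpha_{\mb{M},+}\circ f_+$ and $\alpha_{\mb{W}^k,+}$ from $\mb{W}^k_+$ to $\mb{G}_+$: using condition (iv) in $\mb{M}$ together with the fact, just established, that $f$ preserves $\mu$, both functions satisfy the identity $\beta(y + y') = \beta(y) + \beta(y') + \partial(\mu_{\mb{W}^k}(y,y'))$, so their difference is an honest group homomorphism $\mb{W}^k_+ \to \mb{G}_+$. This difference vanishes on each $b_i$ (by the hypothesis $\alpha_+(y_i)=0$ and by (\ref{eq: standard Wall-form maps})) and on each $\tau(a_i,h)$ (by condition (vi) and $\alpha_-(x_i) = \alpha_-(a_i) = 0$, both values being $\tau_{\mb{G}}(0,h)=0$); since the $b_i$ and the $\tau(a_i,h)$ generate $\mb{W}^k_+$, the difference is identically zero. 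This shows $f$ is a morphism of Wall forms and, together with the uniqueness observed above, completes the argument.
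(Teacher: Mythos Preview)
Your argument is correct. The paper does not actually prove this proposition: it simply declares it ``immediate from the definition of $\mb{W}^{g}$'' and moves on. Your write-up is therefore considerably more detailed than what appears in the paper, and in particular your treatment of $\alpha_{+}$---observing that the difference $\alpha_{\mb{M},+}\circ f_{+} - \alpha_{\mb{W}^{k},+}$ is a genuine homomorphism once $\mu$ is known to be preserved, and then checking it vanishes on generators---is a clean way to handle the one non-additive map, which the paper glosses over entirely.
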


We now define the \textit{rank} of a Wall form.
\begin{defn} \label{defn: rank of wall form}
For a Wall form $\mb{M}$, the \text{rank} is defined to be the non-negative integer
$$r(\mb{M}) := \max\{ \; g \in \N \; | \; \text{there exists a morphism $\mb{W}^{g} \rightarrow \mb{M}$}\}.$$
The \textit{stable rank} of $\mb{M}$ is then defined to be the non-negative integer
$$\bar{r}(\mb{M}) := \max\{\; \mb{r}(\mb{M}\oplus \mb{W}^{g}) - g \; | \; g \in \N\; \}.$$
 \end{defn}

We have an immediate corollary to Proposition \ref{prop: orthog comp}.
\begin{corollary} \label{cor: rank reduction} Let $f: \mb{W}^{g} \rightarrow \mb{M}$ be a morphism. Then
$\bar{r}(f(\mb{W}^{g})^{\perp}) \geq \bar{r}(\mb{M}) - g.$
\end{corollary}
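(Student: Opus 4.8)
The statement to prove is Corollary \ref{cor: rank reduction}: given a morphism $f: \mb{W}^{g} \rightarrow \mb{M}$, one has $\bar{r}(f(\mb{W}^{g})^{\perp}) \geq \bar{r}(\mb{M}) - g$. The idea is to reduce everything to the orthogonal splitting supplied by Proposition \ref{prop: orthog comp}, applied not to $\mb{M}$ itself but to $\mb{M}$ stabilized by copies of $\mb{W}$, and then to unwind the definition of stable rank. First I would record the basic fact that for any Wall form $\mb{N}$ and any $h \in \N$ there is an orthogonal decomposition $(\mb{N}\oplus\mb{W}^{h}) \cong \mb{N}\oplus\mb{W}^{h}$ with the standard summand split off; this is just Proposition \ref{prop: orthog direct sum} together with the observation that the inclusion of the $\mb{W}^h$ summand is a morphism whose image has orthogonal complement containing $\mb{N}$. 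More importantly, iterating Proposition \ref{prop: orthog comp}: a morphism $\mb{W}^{g}\rightarrow \mb{N}$ produces an orthogonal splitting $\mb{N} = f(\mb{W}^{g})^{\perp}\perp f(\mb{W}^{g})$ with $f(\mb{W}^{g}) \cong \mb{W}^{g}$ (here I use that $f$ is split-injective and that $\mb{W}^{g} = (\mb{W})^{\oplus g}$, applying Proposition \ref{prop: orthog comp} one summand at a time, or simply noting the proof there works verbatim for $\mb{W}^{g}$).

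**Main computation.** With $f: \mb{W}^{g}\rightarrow\mb{M}$ fixed, write $\mb{K} := f(\mb{W}^{g})^{\perp}$, so that $\mb{M} \cong \mb{K}\oplus\mb{W}^{g}$ as Wall forms. I want to show $\bar r(\mb{K}) \geq \bar r(\mb{M}) - g$. Unwinding Definition \ref{defn: rank of wall form}, pick $h \in \N$ realizing the maximum defining $\bar r(\mb{M})$, so $\bar r(\mb{M}) = r(\mb{M}\oplus\mb{W}^{h}) - h$; say there is a morphism $\mb{W}^{N}\rightarrow \mb{M}\oplus\mb{W}^{h}$ with $N = r(\mb{M}\oplus\mb{W}^{h})$. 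Now substitute $\mb{M}\cong\mb{K}\oplus\mb{W}^{g}$: we get a morphism $\mb{W}^{N}\rightarrow \mb{K}\oplus\mb{W}^{g}\oplus\mb{W}^{h} = \mb{K}\oplus\mb{W}^{g+h}$. Hence $r(\mb{K}\oplus\mb{W}^{g+h}) \geq N = \bar r(\mb{M}) + h$. By definition of stable rank, $\bar r(\mb{K}) \geq r(\mb{K}\oplus\mb{W}^{g+h}) - (g+h) \geq (\bar r(\mb{M}) + h) - (g+h) = \bar r(\mb{M}) - g$, which is exactly the claim.

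**Where the subtlety lies.** There is almost nothing hard here; it really is a one-line bookkeeping consequence. The only point that needs a sentence of care is that the identification $\mb{M} \cong f(\mb{W}^{g})^{\perp}\perp f(\mb{W}^{g})$ is an isomorphism of Wall forms with $f(\mb{W}^{g}) \cong \mb{W}^{g}$ as Wall forms — not merely as $H$-pairs — so that composing a morphism into $\mb{M}$ with this identification again lands in something of the form $\mb{K}\oplus\mb{W}^{j}$ with the standard Wall form structure of Proposition \ref{prop: orthog direct sum}. That $f(\mb{W}^{g})\cong\mb{W}^{g}$ follows because $f$ is a morphism and is injective (Proposition \ref{prop: orthog comp}), hence an isomorphism onto its image as Wall forms, and the image is a sub-Wall form by the discussion preceding Definition \ref{defn: non-singular}. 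One also needs that $\mb{W}^{g}\oplus\mb{W}^{h}\cong\mb{W}^{g+h}$ as Wall forms, which is immediate from the defining relations (\ref{eq: standard Wall-form maps}) and the uniqueness clause of Proposition \ref{prop: orthog direct sum}. Granting these formalities, the inequality drops out directly, so I expect no real obstacle.
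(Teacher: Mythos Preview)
Your proof is correct and follows the same route as the paper's own argument, which simply states that the corollary ``follows immediately from the orthogonal splitting in Proposition~\ref{prop: orthog comp} and the definition of the stable rank.'' Your write-up just makes explicit the bookkeeping the paper leaves implicit, including the observation that Proposition~\ref{prop: orthog comp} (stated only for $\mb{W}$) iterates to give the splitting for $\mb{W}^{g}$.
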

\begin{proof} 
This follows immediately from the orthogonal splitting in Proposition \ref{prop: orthog comp} and the definition of the stable rank.
\end{proof}

We now define a simplicial complex which is the subject of the main result of the next section.
\begin{defn} \label{defn: algebraic simplicial complex} For a Wall form $\mb{M}$ let $L(\mb{M})$ be the simplicial complex whose vertices are given by morphisms $f: \mb{W} \rightarrow \mb{M}$. A set of vertices $\{f_{0}, \dots, f_{l}\}$ is an $l$-simplex if the sub Wall forms, $f_{0}(\mb{W}), \dots, f_{l}(\mb{W}) \leq \mb{M}$ are pairwise orthogonal. 
\end{defn}

The next two propositions let us deduce certain algebraic properties of a Wall form $\mb{M}$ from topological properties of the simplicial complex $L(\mb{M})$. The propositions are analogous to \cite[Propositions 3.3 and 3.4]{GRW 14} and are proven in exactly the same way, so we omit the proofs.
\begin{proposition}[Transitivity] \label{lemma: transitivity} Let
$f_{1}, f_{2}: \mb{W} \rightarrow
\mb{M}$ be morphisms. If $|L(\mb{M})|$ is
connected then there exists an automorphism $\Phi:
\mb{M} \longrightarrow \mb{M}$ such that $f_{1}
= \Phi\circ f_{2}$.
\end{proposition}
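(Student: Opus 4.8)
The plan is to use non-singularity of the standard form together with connectivity of $|L(\mb{M})|$ to patch together local automorphisms along a path of vertices. First I would reduce to the case where the morphisms $f_1, f_2 \colon \mb{W} \to \mb{M}$ correspond to adjacent vertices in $L(\mb{M})$, i.e.\ $f_1(\mb{W})$ and $f_2(\mb{W})$ are orthogonal sub-Wall forms of $\mb{M}$. Indeed, since $|L(\mb{M})|$ is connected, any two vertices are joined by an edge-path $f_2 = g_0, g_1, \dots, g_n = f_1$ in which consecutive $g_{i}, g_{i+1}$ span a $1$-simplex, hence have orthogonal images; composing the automorphisms produced in the adjacent case gives the general statement.

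So suppose $f_1(\mb{W})$ and $f_2(\mb{W})$ are orthogonal. By Proposition \ref{prop: orthog comp} applied to $f_1$ and to $f_2$ separately, we get orthogonal splittings
$$\mb{M} \;=\; f_1(\mb{W}) \perp f_1(\mb{W})^{\perp} \;=\; f_2(\mb{W}) \perp f_2(\mb{W})^{\perp}.$$
Because $f_2(\mb{W}) \leq f_1(\mb{W})^{\perp}$ (orthogonality) and $f_2(\mb{W})$ is non-singular (Proposition \ref{prop: non-singularity}), the same argument as in Proposition \ref{prop: orthog comp} — now carried out inside the Wall form $f_1(\mb{W})^{\perp}$ using the duality maps $T^{(0)}, T^{(1)}$ of that sub-form — yields a further orthogonal splitting $f_1(\mb{W})^{\perp} = f_2(\mb{W}) \perp \mb{R}$, where $\mb{R} = f_2(\mb{W})^{\perp} \cap f_1(\mb{W})^{\perp}$. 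Hence
$$\mb{M} \;=\; f_1(\mb{W}) \perp f_2(\mb{W}) \perp \mb{R}.$$
Now define $\Phi \colon \mb{M} \to \mb{M}$ on this orthogonal decomposition by: on $f_1(\mb{W})$ use the isomorphism $f_2 \circ f_1^{-1} \colon f_1(\mb{W}) \to f_2(\mb{W})$; on $f_2(\mb{W})$ use $f_1 \circ f_2^{-1} \colon f_2(\mb{W}) \to f_1(\mb{W})$; and on $\mb{R}$ use the identity. Since each of $f_1, f_2$ is an isomorphism onto its image (split-injectivity, Proposition \ref{prop: orthog comp}) and all three summands are mutually orthogonal, $\Phi$ is a well-defined $H$-map which preserves $\lambda$, $\mu$, $\alpha_\pm$ — the orthogonality guarantees there are no cross-terms to check — and $\Phi$ is its own inverse, hence an automorphism of the Wall form $\mb{M}$. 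By construction $\Phi \circ f_2 = f_1$, which is what we wanted in the adjacent case.

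The main obstacle is the bookkeeping in the orthogonal decomposition: one must be careful that, given orthogonality of $f_1(\mb{W})$ and $f_2(\mb{W})$ as sub-Wall forms, the three pieces $f_1(\mb{W})$, $f_2(\mb{W})$, $\mb{R}$ are genuinely pairwise orthogonal and together exhaust $\mb{M}$, so that the block-defined map $\Phi$ respects all the structure maps. This hinges on applying Proposition \ref{prop: orthog comp} a second time relative to the sub-form $f_1(\mb{W})^{\perp}$, which is legitimate because that sub-form inherits a Wall form structure and $f_2$ factors through it. Once this is set up, verifying that $\Phi$ is a morphism of Wall forms is routine, as is the induction along the edge-path in the connected complex $|L(\mb{M})|$. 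This is exactly the argument of \cite[Proposition 3.3]{GRW 14}, transported to the present algebraic setting.
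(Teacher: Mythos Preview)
Your proposal is correct and follows exactly the argument the paper has in mind: the paper omits the proof and points to \cite[Proposition 3.3]{GRW 14}, which is precisely the edge-path reduction followed by the swap automorphism on the orthogonal decomposition $\mb{M} = f_1(\mb{W}) \perp f_2(\mb{W}) \perp \mb{R}$ that you describe. The only care needed---which you flag---is that Proposition~\ref{prop: orthog comp} applies inside the sub-Wall form $f_1(\mb{W})^\perp$ to give the triple splitting, and that the block-defined $\Phi$ preserves $\alpha_+$ because the cross-$\mu$-terms vanish; both points are fine.
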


\begin{proposition}[Cancelation] \label{lemma: cancelation} Let
$\mb{M}$ and $\mb{N}$ be Wall forms such that there
exists an isomorphism
$\mb{M}\oplus\mb{W} \stackrel{\cong} \longrightarrow
\mb{N}\oplus\mb{W}.$
If $|L(\mb{M}\oplus\mb{W})|$ is connected then there
exists an isomorphism of Wall forms,
$\mb{M} \stackrel{\cong} \longrightarrow \mb{N}.$
\end{proposition}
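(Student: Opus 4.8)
The plan is to reproduce the argument of \cite[Proposition 3.4]{GRW 14}, using the transitivity statement Proposition \ref{lemma: transitivity} to normalize a given isomorphism $\mb{M}\oplus\mb{W}\stackrel{\cong}\longrightarrow\mb{N}\oplus\mb{W}$ so that it carries the standard $\mb{W}$-summand of the source isomorphically onto the standard $\mb{W}$-summand of the target, and then to restrict it to the orthogonal complements of these summands, which will be identified with $\mb{M}$ and $\mb{N}$ respectively.

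The first thing I would record is a preliminary observation: inside a direct sum of Wall forms $\mb{X}\oplus\mb{W}$, the orthogonal complement of the canonically embedded second summand $\iota(\mb{W})$ is exactly $\mb{X}$. This is obtained by combining Proposition \ref{prop: orthog comp} applied to the inclusion $\iota\colon\mb{W}\hookrightarrow\mb{X}\oplus\mb{W}$, which gives the orthogonal splitting $\iota(\mb{W})^{\perp}\perp\iota(\mb{W})=\mb{X}\oplus\mb{W}$, with the evident splitting $\mb{X}\perp\iota(\mb{W})=\mb{X}\oplus\mb{W}$ coming from Proposition \ref{prop: orthog direct sum} and the inclusion $\mb{X}\leq\iota(\mb{W})^{\perp}$; since $\iota(\mb{W})^{\perp}\cap\iota(\mb{W})=\mb{0}$, an elementary chase on the underlying abelian groups (which decompose componentwise) forces $\iota(\mb{W})^{\perp}=\mb{X}$. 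This is the step where the non-singularity of $\mb{W}$ (Proposition \ref{prop: non-singularity}), packaged into Proposition \ref{prop: orthog comp}, enters, and it is the point I would check most carefully.

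Next I would fix an isomorphism $\Psi\colon\mb{M}\oplus\mb{W}\stackrel{\cong}\longrightarrow\mb{N}\oplus\mb{W}$. Since an isomorphism of Wall forms induces a simplicial isomorphism on the complexes of Definition \ref{defn: algebraic simplicial complex}, the hypothesis that $|L(\mb{M}\oplus\mb{W})|$ is connected implies that $|L(\mb{N}\oplus\mb{W})|$ is connected as well. Writing $\iota_{\mb{M}}\colon\mb{W}\hookrightarrow\mb{M}\oplus\mb{W}$ and $\iota_{\mb{N}}\colon\mb{W}\hookrightarrow\mb{N}\oplus\mb{W}$ for the inclusions of the $\mb{W}$-summands, both $\iota_{\mb{N}}$ and $\Psi\circ\iota_{\mb{M}}$ are morphisms $\mb{W}\to\mb{N}\oplus\mb{W}$, so Proposition \ref{lemma: transitivity} supplies an automorphism $\Phi$ of $\mb{N}\oplus\mb{W}$ with $\Phi\circ\Psi\circ\iota_{\mb{M}}=\iota_{\mb{N}}$. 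Replacing $\Psi$ by $\Phi\circ\Psi$, which is still an isomorphism of Wall forms, I may assume $\Psi\circ\iota_{\mb{M}}=\iota_{\mb{N}}$, so that $\Psi$ restricts to an isomorphism $\iota_{\mb{M}}(\mb{W})\stackrel{\cong}\longrightarrow\iota_{\mb{N}}(\mb{W})$.

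Finally, because $\Psi$ is an isomorphism of Wall forms it preserves all values of $\lambda$ and $\mu$, and is bijective, so $\Psi(\mb{A}^{\perp})=\Psi(\mb{A})^{\perp}$ for every sub-Wall form $\mb{A}\leq\mb{M}\oplus\mb{W}$. Taking $\mb{A}=\iota_{\mb{M}}(\mb{W})$ and invoking the preliminary observation on both sides gives
$$\Psi(\mb{M})=\Psi\big(\iota_{\mb{M}}(\mb{W})^{\perp}\big)=\big(\Psi(\iota_{\mb{M}}(\mb{W}))\big)^{\perp}=\iota_{\mb{N}}(\mb{W})^{\perp}=\mb{N},$$
so $\Psi$ restricts to the desired isomorphism of Wall forms $\mb{M}\stackrel{\cong}\longrightarrow\mb{N}$. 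The main obstacle here is not any single hard step but rather the bookkeeping of orthogonal complements — specifically the preliminary identification $\iota(\mb{W})^{\perp}=\mb{X}$ and the fact that isomorphisms of Wall forms commute with $(\cdot)^{\perp}$; everything else is a direct application of Propositions \ref{lemma: transitivity}, \ref{prop: orthog comp}, and \ref{prop: orthog direct sum}, exactly as in \cite{GRW 14}.
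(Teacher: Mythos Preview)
Your proof is correct and follows exactly the approach the paper intends: the paper omits the proof and refers to \cite[Proposition 3.4]{GRW 14}, and your argument is precisely that one, adapted to Wall forms using Propositions \ref{lemma: transitivity}, \ref{prop: orthog direct sum}, and \ref{prop: orthog comp}. The preliminary identification $\iota(\mb{W})^{\perp}=\mb{X}$ inside $\mb{X}\oplus\mb{W}$ and the compatibility of isomorphisms with $(\cdot)^{\perp}$ are handled correctly.
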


\section{High-Connectivity of $L(\mb{M})$} \label{section: connectivity of L}
For what follows, fix once and for all a finitely generated Abelian group $H$ and a form-parameter $(\mb{G}, \partial, \pi, \epsilon)$ for the category $\Ab_{H}^{2}$. Let 
$d$ denote the integer $d(H)$ (which recall, is the generating set length for the group $H$).
Let $\mb{M}$ be a Wall form with form-parameter $(\mb{G}, \partial, \pi, \epsilon)$.
Our main result is the following theorem:
\begin{theorem} \label{thm: high connectivity} Suppose that
$\bar{r}(\mb{M}) \geq g$. Then $lCM(L(\mb{M})) \geq
\frac{1}{2}(g -1-d)$ and the geometric realization
$|L(\mb{M})|$ is $\frac{1}{2}(g - 4 - d)$ connected.
\end{theorem}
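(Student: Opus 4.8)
The plan is to prove this by a double induction, on the connectivity parameter $g$ and on the dimension of the cells one is trying to contract, following the template established by Galatius--Randal-Williams for the analogous complexes in \cite{GRW 14}. The two assertions --- the local statement $lCM(L(\mb{M})) \geq \tfrac{1}{2}(g-1-d)$ and the global connectivity statement --- are intertwined: the local Cohen--Macaulay bound requires understanding links of simplices, and a link $Lk_{L(\mb{M})}(\sigma)$ of an $l$-simplex $\sigma = \{f_{0}, \dots, f_{l}\}$ should itself be identifiable with $L$ of a smaller Wall form, namely the orthogonal complement $\left(f_{0}(\mb{W}) \perp \cdots \perp f_{l}(\mb{W})\right)^{\perp}$. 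By Proposition \ref{prop: orthog comp} (iterated) this orthogonal complement is itself a Wall form, and by Corollary \ref{cor: rank reduction} its stable rank is at least $\bar{r}(\mb{M}) - (l+1) \geq g - l - 1$. So if the global connectivity statement holds for all Wall forms with stable rank $\geq g'$ whenever $g' < g$, then the link of an $l$-simplex is $\tfrac{1}{2}(g - l - 1 - 4 - d)$-connected, which is exactly $\tfrac{1}{2}((g-1-d) - l - 2 - (2-?))$... one checks the arithmetic works out to give $lCM(L(\mb{M})) \geq \tfrac{1}{2}(g-1-d)$ and hence, by the standard relation between $lCM$ plus global connectivity, one gets $\omega CM$ control.

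The global connectivity statement itself is the substantial part. Here I would take a sphere $S^{n} \to |L(\mb{M})|$ with $n \leq \tfrac{1}{2}(g - 4 - d)$; after applying Theorem \ref{thm: cohen mac trick} (the Cohen--Macaulay coloring/surgery trick), using the already-established local bound $lCM(L(\mb{M})) \geq \tfrac{1}{2}(g-1-d) \geq n+1$, one may assume the nullhomotopy $h : I^{n+1} \to |L(\mb{M})|$ is simplicial and simplex-wise injective on the interior. The goal is then to push this disk off of some fixed vertex $f_{0} : \mb{W} \to \mb{M}$, or more precisely to homotope $h$ into the star of $f_{0}$, which is contractible; for this one argues one simplex at a time, and at the crucial moment one has a vertex $f$ in the image of $h$ that is \emph{not} orthogonal to $f_{0}$, and one must replace $f$ by a new vertex $f'$ which \emph{is} orthogonal to $f_{0}$ and which is still orthogonal to all the other vertices $f$ was connected to in the relevant link. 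This is where the algebra of Wall forms, and in particular the parameter $d = d(H)$, enters: given the generators $v = f_{-}(a)$, $w = f_{+}(b)$, one modifies them by adding suitable multiples of $f_{0,-}(a)$, $f_{0,+}(b)$ and elements $\tau_{\mb{M}}(-, h)$ for $h$ ranging over a minimal generating set of $H$ (hence the appearance of $d$) to kill the pairings $\lambda$ and $\mu$ against $f_{0}(\mb{W})$ while preserving the Wall-form relations (v), (vi) — using that $\alpha_{-}$, $\alpha_{+}$ vanish on the standard generators and the computations of Section \ref{subsection: Application to Products of Spheres}. One needs enough ``room'' in the orthogonal complement to find partners for these modified elements, which is exactly what the stable-rank hypothesis $\bar{r}(\mb{M}) \geq g$ guarantees via Corollary \ref{cor: rank reduction}.

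The main obstacle, I expect, is precisely this surgery step on a single bad vertex: producing the replacement morphism $f' : \mb{W} \to \mb{M}$ orthogonal to a prescribed $f_{0}$ and simultaneously remaining a valid vertex in a prescribed link, while controlling all six Wall-form axioms from Definition \ref{defn: Wall-form}. The $(-1)^{q}$-symmetry (i.e. the sign $\epsilon$) and the form-parameter maps $\partial$, $\pi$ make the self-intersection condition $\mu(w', w') = \pi(\alpha_{+}(w'))$ the delicate one to preserve, since adjusting $w$ by $\tau(\cdot, h)$ changes $\mu(w,w)$ and $\alpha_{+}(w)$ in linked ways; one has to choose the correction terms so these changes cancel, and the number of independent correction directions needed is what forces the $-d$ loss in the connectivity range. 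A secondary bookkeeping obstacle is keeping track of which simplices of the triangulated disk get modified and verifying the modification can be done compatibly across a whole disk's worth of bad vertices at once (handled, as in \cite{GRW 14}, by first reducing to the simplex-wise injective situation and then doing the replacement inductively over simplices ordered by some auxiliary function).

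Once both statements are in hand for stable rank $\geq g$, assuming them for all smaller values, the induction closes: the base case $g \leq 3 + d$ is vacuous since $\tfrac{1}{2}(g-4-d) < 0$ means $(-1)$-connected, i.e. no condition, and for the local statement $\tfrac{1}{2}(g-1-d) \leq 1$ one only needs non-emptiness and connectivity of links of small simplices, which follow directly from Proposition \ref{prop: construction of morphisms} and Corollary \ref{cor: rank reduction}. I would organize the write-up so that the link identification $Lk_{L(\mb{M})}(\sigma) \cong L\big((\perp_{i} f_{i}(\mb{W}))^{\perp}\big)$ is proved first as a standalone lemma (it is where Proposition \ref{prop: orthog comp} does its work), then the $lCM$ bound is deduced from the inductive hypothesis on connectivity, then Theorem \ref{thm: cohen mac trick} plus the surgery step yields the connectivity bound for $\mb{M}$ itself, completing the induction.
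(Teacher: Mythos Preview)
Your identification of links $Lk_{L(\mb{M})}(\sigma) \cong L\big((\perp_{i} f_{i}(\mb{W}))^{\perp}\big)$ and the deduction of the $lCM$ bound from the inductive connectivity hypothesis are both correct and match the paper. The gap is in your global connectivity step.

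You write that ``after applying Theorem~\ref{thm: cohen mac trick}~\dots\ one may assume the nullhomotopy $h: I^{n+1} \to |L(\mb{M})|$ is simplicial and simplex-wise injective on the interior,'' and then proceed to push this disk into the star of a fixed vertex. But Theorem~\ref{thm: cohen mac trick} takes an \emph{existing} nullhomotopy as input and improves it; it does not manufacture one. At this stage of the induction you have only a map $S^{n} \to |L(\mb{M})|$, and whether it bounds a disk in $|L(\mb{M})|$ is exactly the statement you are trying to prove. Moreover, even granting the disk, homotoping it into a star would be superfluous: the disk already nullhomotopes the sphere. You may be conflating this argument with the proof of Theorem~\ref{thm: high connectivity of K(M)} in Section~\ref{section: high connectivity of K}, where one genuinely does begin with a nullhomotopy --- obtained by composing with $\Psi$ into the algebraic complex $|L(\mathcal{W}_{p,q}(M))|$, whose connectivity is by then in hand --- and uses Theorem~\ref{thm: cohen mac trick} before lifting vertices back to $K(M)_{p,q}$.

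The paper's mechanism is different and sidesteps this circularity entirely. Fix a vertex $f: \mb{W} \to \mb{M}$, set $\mb{M}' = f(\mb{W})^{\perp}$, and consider the chain of full subcomplexes
\[
L(\mb{M}') \xrightarrow{\ i_{1}\ } L\big(\mb{M}'\perp f(\mb{W}_{(1)})\big) \xrightarrow{\ i_{2}\ } L(\mb{M}).
\]
The composite $i_{2}\circ i_{1}$ is nullhomotopic because every simplex of $L(\mb{M}')$ is adjacent to $f$; and Proposition~\ref{prop: high connected map}, fed by the link computations together with Corollary~\ref{cor: linear functional 2} (the intermediate complex is the kernel of a single $H$-map to $\PH{0}$), shows that $i_{1}$ and $i_{2}$ are each $\tfrac{1}{2}(g-4-d)$-connected. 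A map that is simultaneously nullhomotopic and $n$-connected forces its target to be $n$-connected. No disk in $|L(\mb{M})|$ is ever assumed and no vertex-by-vertex surgery is performed; the parameter $d$ enters only through the rank estimates of Corollaries~\ref{prop: linear functional 1}--\ref{cor: rank reduction 3} that feed the inductive link-connectivity bounds, not through any correction procedure involving a generating set of $H$.
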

\subsection{Some Technical Results}
In order to prove Theorem \ref{thm: high connectivity}, 
we will need some technical algebraic results regarding the standard Wall form $\mb{W}^{g}$. The key results in this subsection are Lemmas \ref{lemma: automorphism 2} and \ref{cor: automorphism 1}. We will need to fix some notation. 
For $g \in \N$ denote by 
\begin{equation} \mb{W}^{g}_{(0)} \leq \mb{W}^{g} \quad  \text{and} \quad \mb{W}_{(1)}^{g} \leq \mb{W}^{g} \end{equation}
the sub-$H$-pairs which correspond to the direct summands of $(\PH{0})^{\oplus g}$ and $(\PH{1})^{\oplus g}$ respectively in $\mb{W}^{g}$. 
We have,
\begin{equation} \label{eq: isotropic decomp}
(\mb{W}_{(0)}^{g})_{-} = 0, \quad (\mb{W}_{(0)}^{g})_{+} =  \Z\langle b_{1}, \dots, b_{g}\rangle, \quad (\mb{W}_{(1)}^{g})_{-} = \mb{W}_{-}^{g} =  \Z\langle a_{1}, \dots, a_{g}\rangle.
\end{equation}
Furthermore, $\tau$ maps  the tensor product
$\mb{W}^{g}_{-}\otimes H$ isomorphically onto  $(\mb{W}^{g}_{(1)})_{+}$. We denote 
\begin{equation} \label{eq: restriction of codomain}
\tau^{(1)}: \mb{W}^{g}_{-}\otimes H \stackrel{\cong} \longrightarrow (\mb{W}^{g}_{(1)})_{+}; \quad x\otimes h \mapsto \tau(x, h)
\end{equation}
the isomorphism defined by restricting the the codomain of $\tau$ to $(\mb{W}^{g}_{(1)})_{+}$.
\begin{proposition} \label{prop: endomorphism} For all $g \in \N$ any endomorphism $f: \mb{W}^{g} \longrightarrow \mb{W}^{g}$ is an automorphism. 
\end{proposition}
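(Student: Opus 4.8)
The plan is to exploit the rigidity of the standard Wall form $\mb{W}^{g}$ by analyzing what an endomorphism $f\colon \mb{W}^{g}\to\mb{W}^{g}$ must do on the two pieces of the filtration $\mb{W}^g_{(1)}\le\mb{W}^g$ and the quotient. First I would observe that since $\alpha_-$ vanishes on $\mb{W}^g_-$ and $f_-$ is a group homomorphism, the $H$-map condition in diagram~(\ref{eq: morphism diagram}) forces $f$ to send $\tau(\mb{W}^g_-\otimes H) = (\mb{W}^g_{(1)})_+$ into $(\mb{W}^g_{(1)})_+$. Thus $f$ respects the sub-$H$-pair $\mb{W}^g_{(1)}$ and induces an endomorphism of the quotient, which on component groups is an endomorphism of $\mb{W}^g_- \cong \Z^g$ (in degree $-$, where the quotient is trivial) and of $(\mb{W}^g_{(0)})_+ \cong \Z^g$ (in degree $+$). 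The strategy is then: (1) show the induced map $\bar f_-\colon \mb{W}^g_- \to \mb{W}^g_-$ is an isomorphism; (2) deduce the induced map on $(\mb{W}^g_{(0)})_+\cong \mb{W}^g_+/(\mb{W}^g_{(1)})_+$ is an isomorphism via $\lambda$-duality; (3) conclude $f_-$ and $f_+$ are isomorphisms by the five-lemma applied to the short exact sequences of component groups, and hence $f$ is an automorphism of $H$-pairs.

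For step~(1), the key point is that $\lambda$ is a perfect pairing between $\mb{W}^g_- \cong \Z^g$ and $(\mb{W}^g_{(0)})_+ \cong \Z^g$ (this is essentially the non-singularity statement of Proposition~\ref{prop: non-singularity}, restricted to these free parts: $\lambda(a_i, b_j) = \delta_{i,j}$). Since $f$ preserves $\lambda$ and respects the filtration, the matrix of $\bar f_-$ with respect to the basis $a_1,\dots,a_g$ and the matrix of the induced map on the $b$'s are transposes of each other (up to the identification given by $\lambda$), so they have the same determinant $\Delta \in \Z$. Likewise, condition~ii of Definition~\ref{defn: Wall-form} together with the isomorphism $\tau^{(1)}$ of~(\ref{eq: restriction of codomain}) shows that $f$ restricted to $(\mb{W}^g_{(1)})_+ \cong \mb{W}^g_- \otimes H$ is $\bar f_- \otimes \mathrm{Id}_H$. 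Now I would use the hypothesis that $f$ is an endomorphism of $\mb{W}^g$ and not merely a morphism from a smaller form: surjectivity is the real content. The cleanest route is to note that $\lambda(f_-(a_i), f_+(b_j)) = \lambda(a_i, b_j) = \delta_{i,j}$ forces the composite $\Z^g \xrightarrow{\bar f_-} \Z^g \xrightarrow{\lambda} (\Z^g)^\vee$ paired against $\bar f_+$ to be the identity, so $\Delta \cdot \Delta' = \pm 1$ where $\Delta'$ is the determinant of the induced map on the $b$'s; but $\Delta' = \pm\Delta$, hence $\Delta^2 = 1$, so $\Delta = \pm 1$ and $\bar f_-$ is an isomorphism over $\Z$.

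Given that $\bar f_-$ is an isomorphism, step~(2) follows because $\bar f_+$ on $(\mb{W}^g_{(0)})_+$ is (the inverse transpose of) $\bar f_-$ under the perfect pairing $\lambda$, hence also an isomorphism; and the restriction of $f_+$ to $(\mb{W}^g_{(1)})_+$ is $\bar f_- \otimes \mathrm{Id}_H$, an isomorphism since $\bar f_-$ is. Then in step~(3), $f_-$ equals $\bar f_-$ (the degree-$-$ part has no filtration), so $f_-$ is an isomorphism; and for $f_+$ we have a commutative ladder of short exact sequences
$$
0 \to (\mb{W}^g_{(1)})_+ \to \mb{W}^g_+ \to (\mb{W}^g_{(0)})_+ \to 0
$$
with $f_+$ acting compatibly and inducing isomorphisms on the outer terms, so $f_+$ is an isomorphism by the five-lemma. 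Since $(f_-, f_+)$ is a pair of isomorphisms commuting with $\tau$, its inverse is also an $H$-map, so $f$ is an automorphism of $H$-pairs, as claimed. The main obstacle I anticipate is step~(1): verifying that the non-obvious surjectivity of $f$ really does follow from the pairing identities, i.e.\ ruling out the possibility that $\bar f_-$ has determinant $\pm1$ but is somehow not onto — which of course cannot happen over $\Z$ once the determinant is a unit, so the genuine work is just bookkeeping with the pairing $\lambda$ and the transposition relation between the two induced maps. (Note this proposition is about $H$-pairs only, so Wall-form conditions iii–vi and $\alpha_+$ play no role beyond the vanishing of $\alpha_-$ used at the very start.)
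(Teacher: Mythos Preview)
Your approach is correct and genuinely different from the paper's. A small slip: the matrices of $\bar f_-$ and the induced map on the $b$'s are \emph{not} transposes of each other, and the claim $\Delta' = \pm\Delta$ is unjustified. But you don't need it: the pairing identity $\lambda(f_-(a_i), f_+(b_j)) = \delta_{ij}$, together with condition~i of Definition~\ref{defn: Wall-form} (so that the $(\mb{W}^g_{(1)})_+$-part of $f_+(b_j)$ contributes nothing to $\lambda$), already gives $A^{T}B = I$ with $A,B$ integer $g\times g$ matrices, whence $\det A$ and $\det B$ are units in $\Z$ and both maps are isomorphisms. After that your five-lemma argument goes through cleanly. (Also, the vanishing of $\alpha_-$ is not actually needed to see that $f$ preserves $(\mb{W}^g_{(1)})_+$; the $H$-map diagram alone gives $f_+(\tau(x,h)) = \tau(f_-(x),h)$.)

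By contrast, the paper argues more structurally: it invokes Proposition~\ref{prop: orthog comp} to get the orthogonal splitting $\mb{W}^g = f(\mb{W}^g)\perp f(\mb{W}^g)^\perp$, so that $\mb{W}^g \cong \mb{W}^g \oplus f(\mb{W}^g)^\perp$ as $H$-pairs, and then appeals to the general cancellation lemma Proposition~\ref{prop: direct sum iso} to kill the complement. Your route is more elementary---just linear algebra over $\Z$ plus the five-lemma---and avoids both of those propositions entirely, at the cost of being specific to the explicit basis-and-pairing structure of $\mb{W}^g$. The paper's route is shorter given the machinery already in place and generalizes more readily, but yours makes the mechanism (unimodularity of the pairing forces invertibility) completely transparent.
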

\begin{proof}
Let $f: \mb{W}^{g} \longrightarrow \mb{W}^{g}$ be given. By Proposition \ref{prop: orthog comp} $f$ is split-injective. To prove the result it will suffice to show that $f$ is surjective. 
By Proposition \ref{prop: orthog comp} there is an orthogonal splitting $f(\mb{W}^{g})\perp f(\mb{W}^{g})^{\perp} \; = \; \mb{W}^{g}$. 
Since the morphism $f$ is an isomorphism onto its image $f(\mb{W}^{g})$, the above splitting induces an isomorphism (of $H$-pairs), $\mb{W}^{g}\oplus f(\mb{W}^{g})^{\perp} \cong \mb{W}^{g}$. 
Since $\mb{W}^{g}$ is finitely generated as an $H$-pair, it follows from Proposition \ref{prop: direct sum iso} that $f(\mb{W}^{g})^{\perp} = \mb{0}$, and thus $f(\mb{W}^{g}) = \mb{W}^{g}$. 
This concludes the proof of the proposition.
\end{proof}

\begin{lemma} \label{eq: rank reduction 1} Let $k$ and $g$ be positive integers and 
 let $f: \mb{W}^{k} \longrightarrow \mb{W}^{g+k}$ be a morphism of Wall forms. Then there is an isomorphism 
 $\mb{W}^{g} \stackrel{\cong} \longrightarrow f(\mb{W}^{k})^{\perp}$. 
\end{lemma}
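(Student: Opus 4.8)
The plan is to reduce everything to the orthogonal-complement machinery already established, namely Propositions~\ref{prop: orthog comp} and~\ref{prop: non-singularity}, together with the ``direct-sum cancelation'' Proposition~\ref{prop: direct sum iso}. First I would observe that $\mb{W}^{k}$ is itself the standard Wall form of rank $k$, so by iterating Proposition~\ref{prop: orthog comp} (applied to the $k$ morphisms $\mb{W}\hookrightarrow\mb{W}^{k}\stackrel{f}{\to}\mb{W}^{g+k}$ picking off the summands, or just by applying it directly to $f$ after noting $\mb{W}^{k}=\mb{W}\oplus\cdots\oplus\mb{W}$ and that the image of $f$ is generated by mutually orthogonal copies of $\mb{W}$) one obtains an orthogonal splitting
$$
\mb{W}^{g+k} \;=\; f(\mb{W}^{k})\,\perp\, f(\mb{W}^{k})^{\perp},
$$
with $f$ an isomorphism onto $f(\mb{W}^{k})\cong\mb{W}^{k}$. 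Hence there is an isomorphism of Wall forms (in particular of $H$-pairs)
$$
\mb{W}^{k}\oplus f(\mb{W}^{k})^{\perp} \;\cong\; \mb{W}^{g+k}.
$$

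Next I would pin down $f(\mb{W}^{k})^{\perp}$ abstractly. Adding $\mb{W}^{g}$ to both sides gives $\mb{W}^{g+k}\oplus f(\mb{W}^{k})^{\perp}\cong\mb{W}^{g+k}\oplus\mb{W}^{g}=\mb{W}^{g+k}$ only after we know what we want, so instead I would argue as follows. We have $\mb{W}^{g+k}\cong\mb{W}^{k}\oplus\mb{W}^{g}$ canonically; combining with the displayed isomorphism yields an isomorphism of $H$-pairs $\mb{W}^{k}\oplus f(\mb{W}^{k})^{\perp}\cong\mb{W}^{k}\oplus\mb{W}^{g}$. To conclude that $f(\mb{W}^{k})^{\perp}\cong\mb{W}^{g}$ as \emph{Wall forms} (not merely $H$-pairs), I would use that both $\mb{W}^{g+k}$ and its sub-Wall form $f(\mb{W}^{k})$ are non-singular (Proposition~\ref{prop: non-singularity}), so the orthogonal complement of a non-singular sub-Wall form in a non-singular Wall form is again non-singular and, moreover, the orthogonal splitting is a splitting of Wall forms in the sense of Proposition~\ref{prop: orthog direct sum}. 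Thus $f(\mb{W}^{k})^{\perp}$ is a non-singular Wall form $\mb{N}$ with $\mb{N}\oplus\mb{W}^{k}\cong\mb{W}^{g}\oplus\mb{W}^{k}$ as Wall forms.

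At this point the statement follows from a cancelation argument: I would invoke the cancelation Proposition~\ref{lemma: cancelation} (or rather an iterated version of it), applied $k$ times to cancel the $k$ copies of $\mb{W}$ from $\mb{N}\oplus\mb{W}^{k}\cong\mb{W}^{g}\oplus\mb{W}^{k}$, to get $\mb{N}\cong\mb{W}^{g}$ as Wall forms. Using Proposition~\ref{lemma: cancelation} requires knowing that $|L(\mb{N}\oplus\mb{W}^{j})|$ is connected for the relevant $j$; since $\mb{N}\oplus\mb{W}^{j}\cong\mb{W}^{g}\oplus\mb{W}^{k}$ (for $j$ up to $k$) has stable rank at least $g+k\geq g$, Theorem~\ref{thm: high connectivity} supplies the connectivity provided $g$ is large enough relative to $d=d(H)$. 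If one wants the lemma with no hypothesis on $g$, I would instead bypass Proposition~\ref{lemma: cancelation} entirely and argue purely algebraically: $\mb{N}$ is a finitely generated non-singular Wall form with $\mb{N}\oplus\mb{W}^{k}\cong\mb{W}^{g+k}$; applying Proposition~\ref{prop: direct sum iso}-type reasoning to the $H$-pair level shows $\mb{N}_{-}\cong\Z^{g}$ and $\mb{N}_{+}$ has the expected rank, and then the non-singularity forces a standard basis—i.e., one produces elements $x_{1},\dots,x_{g}\in\mb{N}_{-}$, $y_{1},\dots,y_{g}\in\mb{N}_{+}$ with $\lambda(x_{i},y_{j})=\delta_{i,j}$, $\mu(y_{i},y_{j})=0$, $\alpha_{-}(x_{i})=\alpha_{+}(y_{i})=0$, which by Proposition~\ref{prop: construction of morphisms} gives a morphism $\mb{W}^{g}\to\mb{N}$, and by Proposition~\ref{prop: endomorphism}-style finite-generation/non-singularity reasoning this morphism is an isomorphism.

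\textbf{Expected main obstacle.} The genuinely delicate point is upgrading the $H$-pair isomorphism to a Wall form isomorphism. The orthogonal-complement construction easily gives $f(\mb{W}^{k})^{\perp}$ as a sub-Wall form with the right underlying $H$-pair, but showing it is \emph{the standard} Wall form $\mb{W}^{g}$—i.e.\ producing a standard basis realizing all of $\lambda,\mu,\alpha_{\pm}$ simultaneously, and checking the $\alpha_{+}$-values vanish using the form-parameter relations (iv)--(vi) of Definition~\ref{defn: Wall-form}—is where the work concentrates. The cleanest route is the cancelation route via Proposition~\ref{lemma: cancelation}, deferring the basis-hunting to the already-established machinery; the self-contained algebraic route requires re-proving a Witt-cancelation-type statement for Wall forms by hand, which is more laborious.
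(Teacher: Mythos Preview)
Your primary route has a fatal circularity. You invoke Proposition~\ref{lemma: cancelation}, which needs $|L(\mb{N}\oplus\mb{W})|$ connected, and you propose to get that connectivity from Theorem~\ref{thm: high connectivity}. But Theorem~\ref{thm: high connectivity} is proved \emph{downstream} of the present lemma: its proof rests on Proposition~\ref{prop: zero case} and Corollaries~\ref{prop: linear functional 1}--\ref{cor: rank reduction 3}, which in turn rest on Lemma~\ref{lemma: automorphism 2} and Proposition~\ref{prop: morphism 1}, both of which invoke the very lemma you are trying to prove. So even setting aside the restriction to large $g$ you already flagged, the cancelation route is not available here.

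Your fallback---produce a standard basis directly---is in fact what the paper does, but your sketch stops exactly where the work begins. Saying ``non-singularity forces a standard basis'' is not a proof: non-singularity of $\mb{N}$ gives you duality isomorphisms, but it does not by itself hand you elements $y_{i}$ with $\mu(y_{i},y_{j})=0$ and $\alpha_{+}(y_{i})=0$. The paper's argument is an explicit construction for the base case $k=1$ (the inductive step is then routine). One writes $v=f_{-}(a)$, $w=f_{+}(b)$, uses the $H$-pair decomposition $\mb{W}^{g+1}=\mb{W}^{g+1}_{(0)}\oplus\mb{W}^{g+1}_{(1)}$ to split $w=w_{(0)}+w_{(1)}$, and then, because $\lambda(v,w_{(0)})=1$, picks $x_{i}\in\mb{W}^{g+1}_{-}$ and $y_{i}\in(\mb{W}^{g+1}_{(0)})_{+}$ with $\lambda(x_{i},y_{j})=\delta_{ij}$, $\lambda(x_{i},w)=0$, $\lambda(v,y_{i})=0$. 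The $y_{i}$ are not yet orthogonal to $w$ under $\mu$; the key move is the $\tau$-correction
\[
\hat{y}_{i} \;:=\; y_{i}-\tau(v,\mu(y_{i},w)),
\]
after which one checks, using axioms (i), (ii), (iv), (vi) of Definition~\ref{defn: Wall-form}, that $\hat{y}_{i}\in f(\mb{W})^{\perp}_{+}$, that $\lambda(x_{i},\hat{y}_{j})=\delta_{ij}$, and that $\alpha_{+}(\hat{y}_{i})=0$. Proposition~\ref{prop: construction of morphisms} then produces $\varphi:\mb{W}^{g}\to f(\mb{W})^{\perp}$, and $f\perp\varphi:\mb{W}^{g+1}\to\mb{W}^{g+1}$ is an automorphism by Proposition~\ref{prop: endomorphism}, forcing $\varphi$ to be an isomorphism. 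The ``obstacle'' you anticipated is real, and this correction term is precisely how it is overcome.
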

\begin{proof} 
We prove the result by induction on $k$ with $g \in \N$ arbitrary.
For the case where $k = 1$ (which is the base-case of the induction),
let $f: \mb{W} \longrightarrow \mb{W}^{g+1}$ be given.
To prove that $f(\mb{W})^{\perp} \cong \mb{W}^{g}$,
by Proposition \ref{prop: endomorphism} it will suffice to construct a morphism $\varphi: \mb{W}^{g} \longrightarrow f(\mb{W})^{\perp}$. 
Assuming such a morphism $\varphi$ exists we write $\mb{W}^{g+1} = \mb{W}\perp\mb{W}^{g}$. The direct sum of the morphisms $f$ and $\varphi$ yield the morphism,
 $$\xymatrix{
 f\perp \varphi:  \mb{W}^{g+1} = \mb{W}\perp\mb{W}^{g} \ar[rr] &&  f(\mb{W})\perp f(\mb{W})^{\perp} = \mb{W}^{g+1}
 }$$
which is an isomorphism by Proposition \ref{prop: endomorphism}. Since $f$ is an isomorphism onto its image $f(\mb{W})$, it follows that $\varphi$ must be an isomorphism as well. 
 
We let $v$ and $w$ denote the elements  $f_{-}(a) \in \mb{W}^{g+1}_{-}$ and $f_{+}(b) \in \mb{W}^{g+1}_{+}$ respectively, where $a$ and $b$ are the standard generators of $\mb{W}$. 
 Using the direct-sum decomposition $\mb{W}^{g+1} = \mb{W}^{g+1}_{(0)}\oplus\mb{W}^{g+1}_{(1)}$ (direct-sum as $H$-pairs),
we may write $w = w_{(0)} + w_{(1)}$ for unique choice of $w_{(0)} \in (\mb{W}^{g+1}_{(0)})_{+}$ and $w_{(1)} \in (\mb{W}^{g+1}_{(1)})_{+}$. Since 
$\lambda(v, w) = \lambda(v, w_{(0)}) = 1,$ 
it follows that the subgroups
$\Z\langle v \rangle \leq \mb{W}^{g+1}_{-}$ and  $\Z\langle w_{(0)} \rangle \leq (\mb{W}^{g+1}_{(0)})_{+}$
split as direct summands in $\mb{W}^{g+1}_{-}$ and $(\mb{W}^{g+1}_{(0)})_{+}$ respectively.
Thus for $i = 1, \dots, g$ we may choose elements $x_{i} \in \mb{W}^{g+1}_{-}$ and $y_{i} \in (\mb{W}^{g+1}_{(0)})_{+}$ such that 
$$
\lambda(x_{i}, y_{j}) = \delta_{i, j}, \quad \lambda(x_{i}, w) = 0, \quad \text{and} \quad  \lambda(v, y_{i}) = 0 \quad \text{for all $i, j \in \{1, \dots, g\}$.}
$$
Notice that since $y_{i} \in (\mb{W}^{g+1}_{(0)})_{+}$, it is automatic that $\mu(y_{i}, y_{j}) = 0$ and $\mu(w_{(0)}, y_{i}) = 0$ for all $i, j$.
For each $i$, we set
$$h_{i} \; := \; \mu(y_{i}, w) = \mu(y_{i}, w_{(1)}) \quad \text{and} \quad \hat{y}_{i} := y_{i} - \tau(v, h_{i}).$$
Since 
$$\mu(\hat{y}_{i}, w) = \mu(y_{i}, w) - \mu(\tau(v, h_{i}), w) = h_{i} - \lambda(v, w)\cdot h_{i} = h_{i} - h_{i} = 0,$$
(the second equality follows by Condition ii. of Definition \ref{defn: Wall-form}) we have $\hat{y}_{i} \in f(\mb{W})^{\perp}_{+}$. We have $x_{i} \in f(\mb{W})^{\perp}_{-}$ as well and so 
the lemma will be proven once we construct a morphism $\varphi: \mb{W}^{g} \rightarrow \mb{W}^{g+1}$ such that $\varphi_{-}(a_{i}) = x_{i}$ and $\varphi_{+}(b_{i}) = \hat{y_{i}}$ for $i = 1, \dots, g$. The existence of such a morphism will follow from Proposition \ref{prop: construction of morphisms} once we demonstrate that,
\begin{equation} \label{eq: basis elements}
\alpha_{-}(x_{i}) = 0, \quad \alpha_{+}(\hat{y}_{i}) = 0, \quad \text{and} \quad  \lambda(x_{i}, \hat{y}_{j}) = \delta_{i,j} \quad \text{for $i, j = 1, \dots, g$.}
\end{equation}
First, $\alpha_{-}(x_{i}) = 0$ is automatic since $\alpha_{-}: \mb{W}^{g+1}_{-} \rightarrow \mb{G}_{-}$ is by definition the zero map. 
Condition iv. of  Definition \ref{defn: Wall-form} implies that 
$$
\alpha_{+}(\hat{y}_{i}) = \alpha_{+}(y_{i}) - \alpha_{+}(\tau(v, h_{i})) + \partial[\mu(y_{i}, \tau(v, h_{i}))],
$$
and conditions vi. and ii. imply that,
$$\alpha_{+}(\tau(v, h_{i})) = \tau_{\mb{G}}(\alpha_{-}(v), h_{i}) = 0 \quad \text{and} \quad \partial[\mu(y_{i}, \tau(v, h_{i}))] = \partial(\lambda(v, y_{i})\cdot h_{i}) = \partial(0) = 0.$$ 
Combining these above calculations yields
$\alpha_{+}(\hat{y}_{i}) = \alpha_{+}(y_{i})$
and since $y \in (\mb{W}^{g}_{(0)})_{+}$, we have $\alpha_{+}(\hat{y}_{i}) = 0$.
Finally, we compute
$$\lambda(x_{i}, \hat{y}_{j}) = \lambda(x_{i}, y_{j}) + \lambda(x_{i}, \tau(v, h_{j})) = \delta_{i,j} + 0 = \delta_{i,j},$$
where the second equality follows from condition i. of Definition \ref{defn: Wall-form} and the fact that the $x_{i}$ and $y_{i}$ were initially chosen so that $\lambda(x_{i}, y_{j}) = \delta_{i,j}$.
We have established (\ref{eq: basis elements}) and thus by Proposition \ref{prop: construction of morphisms} there exists a morphism $\varphi: \mb{W} \rightarrow f(\mb{W})^{\perp}$ with $\varphi_{-}(a_{i}) = x_{i}$ and $\varphi_{+}(b_{i}) = \hat{y}_{i}$ for $i = 1, \dots, g$. 
This established the  result for the case where $k = 1$.

For the inductive step, assume that the result holds for the $k-1$-case (with $g$ arbitrary) and let $f: \mb{W}^{k} \longrightarrow \mb{W}^{g+k}$ be a morphism. 
Consider the orthogonal direct-sum decomposition $\mb{W}^{k} = \mb{W}^{1}\perp\mb{W}^{k-1}$ and let denote $\mb{V}$ dneote the sub-Wall form $f(\mb{W}^{1})^{\perp} \leq \mb{W}^{g+k}$. 
Since $f$ preserves orthogonality we have $f(\mb{W}^{k-1}) \leq \mb{V}$.
The base case of the induction implies that there is an isomorphism, 
$\mb{V} \cong \mb{W}^{g+k-1}$. 
 Using the isomorphism $\mb{V} \cong \mb{W}^{g+k-1}$, the induction assumption implies that there is an isomorphism 
$$f(\mb{W}^{k-1})^{\perp}\cap\mb{V} \cong \mb{W}^{g}.$$
Now, for any Wall form $\mb{M}$ and two orthogonal sub-Wall forms $\mb{A}, \mb{B} \leq \mb{M}$ we have, 
$$(\mb{A}\perp\mb{B})^{\perp} = \mb{A}^{\perp}\cap\mb{B}^{\perp}.$$
From this it follows that,
$$f(\mb{W}^{k}) = [f(\mb{W}^{1})\perp f(\mb{W}^{k-1})]^{\perp} = \mb{V}\cap f(\mb{W}^{k-1})^{\perp} \cong \mb{W}^{g}.$$
This concludes the inductive step and the proof of the lemma.
\end{proof}

\begin{proposition} \label{prop: morphism 1} Let $k$ and $g$ be positive integers such that $k+ 1 \leq g$. Let $y \in (\mb{W}^{g}_{(0)})_{+}$ and $x_{1}, \dots, x_{k} \in \mb{W}^{g}_{-}$. Then there exists a morphism $f: \mb{W}^{k+1} \longrightarrow \mb{W}^{g}$ such that $y \in f(\mb{W}^{k+1})_{+}$ and $x_{i} \in f(\mb{W}^{k+1})_{-}$ for $i = 1, \dots, k$. 
\end{proposition}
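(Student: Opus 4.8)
The plan is to translate the statement into lattice theory inside the free abelian groups $\mb{W}^{g}_{-} = \Z\langle a_{1},\dots,a_{g}\rangle$ and $(\mb{W}^{g}_{(0)})_{+} = \Z\langle b_{1},\dots,b_{g}\rangle$, construct an appropriate basis of $\mb{W}^{g}_{-}$, and then invoke Proposition~\ref{prop: construction of morphisms} to manufacture the required morphism. The point is that on the standard Wall form everything relevant happens at this level: $\lambda$ restricts to a perfect pairing between $\mb{W}^{g}_{-}$ and $(\mb{W}^{g}_{(0)})_{+}$ (its matrix in the standard bases is the identity), while $\mu$ and $\alpha_{+}$ vanish identically on $(\mb{W}^{g}_{(0)})_{+}$ and $\alpha_{-}$ vanishes identically on $\mb{W}^{g}_{-}$. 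The vanishing of $\alpha_{+}$ on $(\mb{W}^{g}_{(0)})_{+}$ follows from condition~iv of Definition~\ref{defn: Wall-form} together with the fact that $\mu$ vanishes on $(\mb{W}^{g}_{(0)})_{+}$, which forces $\alpha_{+}$ to be a homomorphism there and hence to kill the generators $b_{i}$.

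First I would record precisely what is needed. For any basis $\xi_{1},\dots,\xi_{g}$ of $\mb{W}^{g}_{-}$ there is a unique $\lambda$-dual basis $\beta_{1},\dots,\beta_{g}$ of $(\mb{W}^{g}_{(0)})_{+}$, characterized by $\lambda(\xi_{i},\beta_{j}) = \delta_{i,j}$; it does span $(\mb{W}^{g}_{(0)})_{+}$ since the change-of-basis matrix lies in $\mathrm{GL}_{g}(\Z)$. Because $\lambda(\xi_{i},\beta_{j}) = \delta_{i,j}$, $\mu(\beta_{i},\beta_{j}) = 0$, $\alpha_{-}(\xi_{i}) = 0$ and $\alpha_{+}(\beta_{i}) = 0$, Proposition~\ref{prop: construction of morphisms} provides a morphism $f\colon \mb{W}^{k+1} \to \mb{W}^{g}$ sending the $i$th pair of standard generators of $\mb{W}^{k+1}$ to $(\xi_{i},\beta_{i})$ for $i = 1,\dots,k+1$; this $f$ satisfies $f(\mb{W}^{k+1})_{-} = \Z\langle\xi_{1},\dots,\xi_{k+1}\rangle$ and $f(\mb{W}^{k+1})_{+} \supseteq \Z\langle\beta_{1},\dots,\beta_{k+1}\rangle$. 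Hence the proposition reduces to producing a basis $\xi_{1},\dots,\xi_{g}$ of $\mb{W}^{g}_{-}$ such that (a) $x_{1},\dots,x_{k} \in \Z\langle\xi_{1},\dots,\xi_{k}\rangle$ and (b) $\lambda(\xi_{i},y) = 0$ for all $i \geq k+2$: property (a) puts $x_{1},\dots,x_{k}$ into $f(\mb{W}^{k+1})_{-}$, and since $y \in (\mb{W}^{g}_{(0)})_{+}$ and $\lambda(\xi_{i},y)$ is precisely the $i$th coordinate of $y$ in the basis $\{\beta_{j}\}$, property (b) yields $y \in \Z\langle\beta_{1},\dots,\beta_{k+1}\rangle \subseteq f(\mb{W}^{k+1})_{+}$.

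Constructing such a basis is the technical heart of the argument, and it is pure lattice theory. Let $\bar H \leq \mb{W}^{g}_{-}$ be the saturation of $\Z\langle x_{1},\dots,x_{k}\rangle$, that is, the smallest direct summand of $\mb{W}^{g}_{-}$ containing it; it has rank $r \leq k$, and we fix a complement $C$ with $\mb{W}^{g}_{-} = \bar H \oplus C$. Let $N = \{\, v \in \mb{W}^{g}_{-} : \lambda(v,y) = 0 \,\}$, the kernel of a homomorphism to $\Z$, hence a pure subgroup of $\mb{W}^{g}_{-}$ of rank at least $g-1$. Then $L := C \cap N$ is an intersection of pure subgroups, so it is pure, hence a direct summand of $\mb{W}^{g}_{-}$, and a rank count gives that $L$ has rank at least $(g-r) + (g-1) - g = g-r-1 \geq g-k-1 \geq 0$. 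Choose a rank-$(g-k-1)$ direct summand $Q$ of $L$. As a summand of a summand, $Q$ is a direct summand of $\mb{W}^{g}_{-}$; it is contained in $C$, so $\bar H \oplus Q$ is a direct summand of $\mb{W}^{g}_{-}$ of rank $r + (g-k-1)$; and it is contained in $N$. The complement of $\bar H \oplus Q$ in $\mb{W}^{g}_{-}$ has rank $k+1-r \geq 1$. Now take a basis $\xi_{1},\dots,\xi_{r}$ of $\bar H$, a basis $w_{1},\dots,w_{k+1-r}$ of that complement, and a basis $\xi_{k+2},\dots,\xi_{g}$ of $Q$, and set $\xi_{r+1},\dots,\xi_{k+1} := w_{1},\dots,w_{k+1-r}$: the resulting $\xi_{1},\dots,\xi_{g}$ form a basis of $\mb{W}^{g}_{-}$ with properties (a) and (b).

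It then only remains to assemble the pieces as described in the second paragraph. The main obstacle is exactly the simultaneous control demanded by (a) and (b): one must fit the ``$x$-directions'' $\bar H$ and the ``$y^{\perp}$-directions'' $Q$ into disjoint coordinate blocks of one basis of $\mb{W}^{g}_{-}$, and this is possible only because the intersection $C \cap N$ of two direct summands is again a direct summand (the purity argument) and because the hypothesis $k+1 \leq g$ supplies the rank inequality $g-r-1 \geq g-k-1$ that leaves room for $Q$. Beyond this, the argument is bookkeeping together with one application of Proposition~\ref{prop: construction of morphisms}. (Alternatively, one could first normalize $x_{1},\dots,x_{k}$ and $y$ using an automorphism of $\mb{W}^{g}$, available from Proposition~\ref{prop: endomorphism}, but the direct construction above sidesteps this.)
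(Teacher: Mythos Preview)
Your proof is correct, and it takes a genuinely different route from the paper's.

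The paper argues inductively. It first settles the case $y=0$ by induction on $k$: for $k=1$ one writes $x = t\bar x$ with $\bar x$ primitive and uses non-singularity to find a dual $w \in (\mb{W}^{g}_{(0)})_{+}$; for the inductive step one applies the hypothesis to $x_{1},\dots,x_{k-1}$, splits $x_{k}$ along the orthogonal decomposition $f'(\mb{W}^{k-1})\perp f'(\mb{W}^{k-1})^{\perp}$, invokes Lemma~\ref{eq: rank reduction 1} to identify $f'(\mb{W}^{k-1})^{\perp}\cong \mb{W}^{g-k+1}$, and handles the residual piece by the base case. For general $y$ one first absorbs $y$ into a single $\mb{W}$-summand (again by writing $y=t\bar y$ with $\bar y$ primitive and dualising), then projects the $x_{i}$ to the orthogonal complement and applies the $y=0$ case there.

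Your argument is a single, non-inductive lattice construction inside $\mb{W}^{g}_{-}\cong\Z^{g}$: you build in one stroke a $\Z$-basis $\xi_{1},\dots,\xi_{g}$ whose first $k$ vectors span the saturation of $\langle x_{1},\dots,x_{k}\rangle$ and whose last $g-k-1$ vectors lie in $y^{\perp}$, then pass to the $\lambda$-dual basis of $(\mb{W}^{g}_{(0)})_{+}$ and feed the first $k{+}1$ pairs into Proposition~\ref{prop: construction of morphisms}. The only Wall-form input you use is Proposition~\ref{prop: construction of morphisms}; in particular you avoid Lemma~\ref{eq: rank reduction 1} entirely. The paper's approach, by contrast, leans on that lemma at every step of the induction, so it stays closer to the orthogonal-complement machinery already developed. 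Your approach is more elementary and arguably more transparent about why the hypothesis $k+1\le g$ is exactly what is needed (it is the rank inequality that leaves room for $Q$); the paper's approach has the advantage of reusing the orthogonal-splitting toolkit uniformly, which is the same toolkit driving the rest of Section~\ref{section: connectivity of L}.
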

\begin{proof} We prove this first for the case that $y = 0$; we prove the proposition for this $y = 0$ case by induction on $k$. Let $x \in \mb{W}^{g}_{-}$. Since $\mb{W}^{g}_{-}$ is a free $\Z$-module there exists a \textit{primitive} element $\bar{x} \in \mb{W}^{g}_{-}$ and $t \in \Z$ such that $t\cdot \bar{x} = x$. By primitive we mean that the submodule $\Z\langle\bar{x}\rangle \leq \mb{W}^{g}_{-}$ splits as a direct summand.
By non-singularity of the Wall form structure, it follows that there exists $w \in (\mb{W}^{g}_{(0)})_{+}$ such that $\lambda(\bar{x}, w) = 1$. Since $\alpha_{+}(w) = 0$, it follows that there exists a morphism $f: \mb{W} \rightarrow \mb{W}^{g}$ such that $f_{-}(a) = \bar{x}$ and $f_{+}(b) = w$. It follows that $x \in f(\mb{W})$. 
This proves the base case. 

Now let $x_{1}, \dots, x_{k} \in \mb{W}^{g}_{-}$ be given and suppose that the statement holds for the $(k-1)$-case. There exists a morphism $f': \mb{W}^{k-1} \longrightarrow \mb{W}^{g}$ such that $x_{i} \in f'(\mb{W}^{k-1})_{-}$ for $i = 1, \dots, k-1$. 
Using the orthogonal splitting $f'(\mb{W}^{k-1})\perp f'(\mb{W}^{k-1})^{\perp} = \mb{W}^{g}$, we may write $x_{k} = x_{k}' + x_{k}''$ for $x_{k}' \in f'(\mb{W}^{k-1})_{-}$ and $x_{k}'' \in f'(\mb{W}^{k-1})^{\perp}_{-}$. 
By Lemma \ref{eq: rank reduction 1} there is an isomorphism $f'(\mb{W}^{k-1})^{\perp} \cong \mb{W}^{g-k+1}$. 
Using this isomorphism, by what was proven in the base case of the induction there exists a morphism $f'': \mb{W} \rightarrow f'(\mb{W}^{k-1})^{\perp}$ with $x_{k}'' \in f''(\mb{W})_{-}$. 
Writing $\mb{W}^{k} = \mb{W}^{k-1}\perp\mb{W}$, the morphism given by sum of $f'$ and $f''$,
$$\xymatrix{
f'\perp f'': \mb{W}^{k} =  \mb{W}^{k-1}\perp\mb{W} \ar[rr] && f'(\mb{W}^{k-1})\perp f'(\mb{W}^{k-1})^{\perp} = \mb{W}^{g}
}$$ 
is the desired morphism. This proves the proposition for the case when $y = 0$. 

Now let 
 $y, x_{1}, \dots, x_{k}$ be given and suppose that $y \neq 0$. $(\mb{W}^{g}_{(0)})_{+}$ is a free $\Z$-module of rank $g$ and 
thus there exists a primitive element $\bar{y} \in (\mb{W}^{g}_{(0)})_{+}$ and $t \in \Z$ such that $y = t\cdot \bar{y}$. 
As before, by non-singularity of the Wall form structure there exists $v \in \mb{W}^{g}_{-}$ such that $\lambda(v, \bar{y}) = 1$. 
Since $\alpha_{+}(\bar{y}) = 0$, there exists a morphism $\phi: \mb{W} \rightarrow \mb{W}^{g}$ such that $\phi_{-}(a) = v$ and $\phi_{+}(b) = \bar{y}$ and thus $y \in \phi_{+}(\mb{W})$. 
Now, let $\mb{V}$ denote the sub-Wall form $\phi(\mb{W}) \leq \mb{W}^{g}$. 
We have a splitting $\mb{W}^{g} = \mb{V}\perp\mb{V}^{\perp}$. 
Using this splitting we may write $x_{i} = x_{i}' + x_{i}''$ for $x' \in \mb{V}_{-}$ and $x_{i}'' \in \mb{V}^{\perp}_{-}$ for $i = 1, \dots, k$. 
By Lemma \ref{eq: rank reduction 1} there is an isomorphism $\mb{V}^{\perp} \cong \mb{W}^{g-1}$. 
Using this isomorphism, by what was proven in the previous paragraph there exists a morphism $\psi: \mb{W}^{k} \longrightarrow \mb{V}^{\perp}$ such that $x_{i}'' \in \psi(\mb{W}^{k})_{-}$ for $i = 1, \dots, k$. The sum $\phi\perp\psi: \mb{W}\perp\mb{W}^{k} \longrightarrow \mb{V}\perp\mb{V}^{\perp} = \mb{W}^{g}$ then yields the desired morphism. This concludes the proof of the proposition. 
\end{proof}

We now arrive at the first result where the \textit{generating set-length} of $H$, $d(H)$, plays a role.
\begin{lemma} \label{lemma: automorphism 2} Let $g$ be an integer such that $d = d(H) \leq g-1$. 
Let $y \in \mb{W}^{g}_{+}$. 
Then there exists an automorphism 
$\Phi: \mb{W}^{g} \longrightarrow \mb{W}^{g}$ such that $\Phi^{-1}_{+}(y) \in (\mb{W}^{d +1}\oplus\mb{0})_{+} \leq \mb{W}^{g}_{+}$. 
\end{lemma}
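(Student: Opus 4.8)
The plan is to reduce the statement to the construction of a single morphism $F\colon\mb{W}^{d+1}\longrightarrow\mb{W}^{g}$ with $y\in F(\mb{W}^{d+1})_{+}$, and then to promote $F$ to the required automorphism using the orthogonal-splitting machinery already at hand. First I would decompose $\mb{W}^{g}=\mb{W}^{g}_{(0)}\oplus\mb{W}^{g}_{(1)}$ as $H$-pairs and write $y=y_{(0)}+y_{(1)}$ accordingly, with $y_{(0)}\in(\mb{W}^{g}_{(0)})_{+}$ and $y_{(1)}\in(\mb{W}^{g}_{(1)})_{+}$. The case $g=d+1$ is vacuous, since then $(\mb{W}^{d+1}\oplus\mb{0})_{+}=\mb{W}^{g}_{+}$ and any automorphism works, so I would assume $g\ge d+2$. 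Granting such an $F$, a straightforward iteration of Proposition \ref{prop: orthog comp} yields the orthogonal splitting $\mb{W}^{g}=F(\mb{W}^{d+1})\perp F(\mb{W}^{d+1})^{\perp}$, and Lemma \ref{eq: rank reduction 1} gives an isomorphism $\mb{W}^{g-d-1}\stackrel{\cong}{\longrightarrow}F(\mb{W}^{d+1})^{\perp}$. Combining these, the map $\Phi$ which equals $F$ on the first $d+1$ summands of $\mb{W}^{g}=\mb{W}^{d+1}\perp\mb{W}^{g-d-1}$ and equals this isomorphism on the remaining $g-d-1$ summands has orthogonal component maps, hence is a morphism $\mb{W}^{g}\longrightarrow\mb{W}^{g}$, hence an automorphism by Proposition \ref{prop: endomorphism}; it carries $\mb{W}^{d+1}\oplus\mb{0}$ onto $F(\mb{W}^{d+1})$, so $\Phi_{+}\big((\mb{W}^{d+1}\oplus\mb{0})_{+}\big)=F(\mb{W}^{d+1})_{+}\ni y$ and therefore $\Phi_{+}^{-1}(y)\in(\mb{W}^{d+1}\oplus\mb{0})_{+}$.

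It remains to produce $F$, and this is the one step where the hypothesis $d=d(H)$ is genuinely used. I would fix a minimal generating set $c_{1},\dots,c_{d}$ of $H$. Since $\tau^{(1)}\colon\mb{W}^{g}_{-}\otimes H\longrightarrow(\mb{W}^{g}_{(1)})_{+}$ is onto by (\ref{eq: restriction of codomain}) and $\mb{W}^{g}_{-}=\Z\langle a_{1},\dots,a_{g}\rangle$, we may write $y_{(1)}=\sum_{k=1}^{g}\tau(a_{k},h_{k})$ for suitable $h_{k}\in H$; expanding each $h_{k}=\sum_{j=1}^{d}m_{kj}c_{j}$ over $\Z$ and using bilinearity of $\tau$ gives
\begin{equation*}
y_{(1)}=\sum_{j=1}^{d}\tau(v_{j},c_{j}),\qquad v_{j}:=\sum_{k=1}^{g}m_{kj}a_{k}\in\mb{W}^{g}_{-}.
\end{equation*}
Now I would apply Proposition \ref{prop: morphism 1} with $k=d$ (valid since $d\le g-1$ forces $k+1\le g$) to the element $y_{(0)}\in(\mb{W}^{g}_{(0)})_{+}$ and the elements $v_{1},\dots,v_{d}\in\mb{W}^{g}_{-}$: this produces a morphism $F\colon\mb{W}^{d+1}\longrightarrow\mb{W}^{g}$ with $y_{(0)}\in F(\mb{W}^{d+1})_{+}$ and $v_{j}\in F(\mb{W}^{d+1})_{-}$ for all $j$. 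Because $F(\mb{W}^{d+1})$ is a sub-$H$-pair of $\mb{W}^{g}$ it is closed under $\tau(\,\cdot\,,h)$, so $\tau(v_{j},c_{j})\in F(\mb{W}^{d+1})_{+}$ for every $j$; summing gives $y_{(1)}\in F(\mb{W}^{d+1})_{+}$, and hence $y=y_{(0)}+y_{(1)}\in F(\mb{W}^{d+1})_{+}$, as needed.

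I expect the crux to be exactly this rewriting of $y_{(1)}$ in terms of a minimal generating set of $H$: it replaces the $g$ a priori ``$H$-coordinates'' of $y$ by only $d$ of them, which, together with the single summand needed to absorb the free part $y_{(0)}$, accounts for the bound $d+1$ in the statement; the rest is bookkeeping with Propositions \ref{prop: orthog comp} and \ref{prop: endomorphism} and Lemma \ref{eq: rank reduction 1}. The only mild subtlety is the degenerate case $H=0$, i.e.\ $d=0$ (which does occur, e.g.\ when $q-p\in\{4,5\}$): then $y_{(1)}=0$, there are no $v_{j}$'s, and one instead places $y_{(0)}$ alone into a single summand $\mb{W}$, exactly as in the $y\ne0$ part of the proof of Proposition \ref{prop: morphism 1}---writing $y_{(0)}$ as an integer multiple of a primitive element of the free $\Z$-module $(\mb{W}^{g}_{(0)})_{+}$, using non-singularity (Proposition \ref{prop: non-singularity}) to find a $\lambda$-dual element in $\mb{W}^{g}_{-}$, and invoking Proposition \ref{prop: construction of morphisms}---after which the reduction of the first paragraph applies verbatim with $d+1=1$.
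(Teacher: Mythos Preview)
Your proof is correct and follows essentially the same route as the paper: reduce to producing a morphism $F\colon\mb{W}^{d+1}\to\mb{W}^{g}$ with $y\in F(\mb{W}^{d+1})_{+}$ by decomposing $y=y_{(0)}+y_{(1)}$, expressing $y_{(1)}$ as $\sum_{j=1}^{d}\tau(v_{j},c_{j})$ via a minimal generating set of $H$, and then invoking Proposition~\ref{prop: morphism 1}; finally promote $F$ to an automorphism using Lemma~\ref{eq: rank reduction 1} and Proposition~\ref{prop: endomorphism}. Your treatment is in fact slightly more careful than the paper's in two places: you handle the degenerate case $d=0$ explicitly (where Proposition~\ref{prop: morphism 1} as stated requires $k\ge 1$), and you avoid the paper's claim that a minimal generating set of $H$ yields a direct-sum decomposition $H=\bigoplus_{i}H_{i}$, which is not true for an arbitrary minimal generating set but is also unnecessary for the argument.
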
  
\begin{proof} It will suffice to construct a morphism $f: \mb{W}^{d+1} \longrightarrow \mb{W}^{g}$ with $y \in f(\mb{W}^{d+1})_{+}$.
With this morphism constructed, by Lemma \ref{eq: rank reduction 1} there exists an isomorphism $\varphi: \mb{W}^{g-d-1} \stackrel{\cong} \longrightarrow f(\mb{W}^{d+1})^{\perp}$. 
Then the map 
$\xymatrix{
f\perp\varphi: \mb{W}^{g} = \mb{W}^{d+1}\perp\mb{W}^{g-d-1} \ar[rr] && f(\mb{W}^{d+1})\perp f(\mb{W}^{d+1})^{\perp}
}$
yields the desired automorphism. 

Using the splitting $\mb{W}^{g}_{(0)}\oplus \mb{W}^{g}_{(1)} = \mb{W}^{g}$, we may write $y = y_{(0)} + y_{(1)}$ with $y_{(0)} \in  (\mb{W}^{g}_{(0)})_{+}$ and $y_{(1)} \in  (\mb{W}^{g}_{(1)})_{+}$.
Let $\{h_{1}, \dots, h_{d}\}$ be a generating set for $H$ of minimal length. For $i = 1, \dots, d$, we denote by $H_{i} \leq H$ the subgroup generated by the element $h_{i}$. Since the generating set $\{h_{1}, \dots, h_{d}\}$ is of minimal length
we have a direct sum decomposition,  $H = \bigoplus_{i=1}^{d}H_{i}$. 
Consider the tensor product, $\mb{W}^{g}_{-}\otimes H$. 
The natural isomorphism, 
$\mb{W}^{g}_{-}\otimes H\; \cong \; \bigoplus_{i=1}^{d}(\mb{W}^{g}_{-}\otimes H_{i})$ 
implies that
for any element $\hat{v} \in \mb{W}^{g}_{-}\otimes H$, there exists $v_{1}, \dots, v_{d} \in \mb{W}^{g}_{-}$ such that $\hat{v} = \sum_{i= 1}^{d}v_{i}\otimes h_{i}$.
Using this general fact about the representability of elements in the tensor product $\mb{W}^{g}_{-}\otimes H$,
the isomorphism 
$\tau^{(1)}: \mb{W}^{g}_{-}\otimes H \stackrel{\cong} \longrightarrow (\mb{W}^{g}_{(1)})_{+}$ of (\ref{eq: restriction of codomain})
implies that there exists 
elements $x_{1}, \dots, x_{d} \in \mb{W}^{g}_{-}$ such that,  
$$y_{(1)} = \sum_{i=1}^{d}\tau(x_{i}, h_{i}).$$
By Proposition \ref{prop: morphism 1} there exists a morphism $f: \mb{W}^{d+1} \longrightarrow \mb{W}^{g}$ such that $y_{(0)} \in f(\mb{W}^{d+1})_{+}$ and $x_{i} \in f(\mb{W}^{d+1})_{-}$ for $i = 1, \dots, d$. 
It follows that $\tau(x_{i}, h_{i}) \in f(\mb{W}^{d+1})_{+}$ for $i = 1, \dots, d$ and thus $y_{(1)} \in f(\mb{W}^{d+1})_{+}$ and $y = y_{(0)} + y_{(1)} \in f(\mb{W}^{d+1})_{+}$.
 This concludes the proof of the lemma.
\end{proof}

\begin{lemma} \label{cor: automorphism 1} Let $x \in \mb{W}^{g}_{-}$. Then there exists an automorphism $\Phi: \mb{W}^{g} \longrightarrow \mb{W}^{g}$ such that 
$\Phi^{-1}_{-}(x) \in (\mb{W}^{1}\oplus\mb{0})_{-} \leq \mb{W}^{g}_{-}$. 
\end{lemma}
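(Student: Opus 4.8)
The plan is to follow the first paragraph of the proof of Lemma \ref{lemma: automorphism 2}, which simplifies considerably in this case: since $x$ lies in the \emph{free} abelian group $\mb{W}^{g}_{-} = \Z\langle a_{1},\dots,a_{g}\rangle$, no generating-set bookkeeping is needed and a single copy of $\mb{W}$ suffices to absorb $x$. First I would dispose of the degenerate cases. If $g \leq 1$ then $(\mb{W}^{1}\oplus\mb{0})_{-} = \mb{W}^{g}_{-}$ and $\Phi = \mathrm{id}$ works; if $x = 0$ any automorphism works. So we may assume $g \geq 2$ and $x \neq 0$, and write $x = t\cdot\bar{x}$ with $t \in \Z$ and $\bar{x} \in \mb{W}^{g}_{-}$ primitive, i.e.\ $\Z\langle\bar{x}\rangle$ splits off as a direct summand of $\mb{W}^{g}_{-}$.

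The crux is to produce a morphism $f: \mb{W} \longrightarrow \mb{W}^{g}$ with $f_{-}(a) = \bar{x}$, so that $f(\mb{W})_{-} = \Z\langle\bar{x}\rangle \ni x$. This is exactly the base case of the induction in the proof of Proposition \ref{prop: morphism 1}: in the bases $\{a_{i}\}$ and $\{b_{i}\}$ the restriction of $\lambda$ to $\mb{W}^{g}_{-}\times(\mb{W}^{g}_{(0)})_{+}$ is the standard dot product on $\Z^{g}$ (by (\ref{eq: standard Wall-form maps})), so primitivity of $\bar{x}$ yields an element $w \in (\mb{W}^{g}_{(0)})_{+}$ with $\lambda(\bar{x},w) = 1$; since $\mu$ and $\alpha_{+}$ vanish on $(\mb{W}^{g}_{(0)})_{+}$ (again by (\ref{eq: standard Wall-form maps}) together with condition iv.\ of Definition \ref{defn: Wall-form}) and $\alpha_{-}$ is identically zero on $\mb{W}^{g}_{-}$, Proposition \ref{prop: construction of morphisms} (with $k = 1$) supplies the desired morphism $f$ with $f_{-}(a) = \bar{x}$ and $f_{+}(b) = w$.

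With $f$ in hand, I would finish exactly as in Lemma \ref{lemma: automorphism 2}: Lemma \ref{eq: rank reduction 1} (applied with $k = 1$) gives an isomorphism $\varphi: \mb{W}^{g-1} \stackrel{\cong}{\longrightarrow} f(\mb{W})^{\perp}$, Proposition \ref{prop: orthog comp} gives the orthogonal splitting $\mb{W}^{g} = f(\mb{W})\perp f(\mb{W})^{\perp}$, and writing $\mb{W}^{g} = \mb{W}\perp\mb{W}^{g-1}$ the combined map
$$\xymatrix{
\Phi := f\perp\varphi: \mb{W}^{g} = \mb{W}\perp\mb{W}^{g-1} \ar[rr] && f(\mb{W})\perp f(\mb{W})^{\perp} = \mb{W}^{g}
}$$
is an endomorphism of $\mb{W}^{g}$, hence an automorphism by Proposition \ref{prop: endomorphism}. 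Since $\Phi$ restricts to $f$ on the first summand and $\Phi_{-}(a_{1}) = \bar{x}$, we get $\Phi^{-1}_{-}(x) = t\cdot a_{1} \in (\mb{W}^{1}\oplus\mb{0})_{-}$, as required. I do not expect a genuine obstacle; the one point worth flagging is that $f$ must have domain $\mb{W}^{1}$ rather than a larger $\mb{W}^{k}$ (which is all one gets by quoting Proposition \ref{prop: morphism 1} directly), and this is precisely why one first passes to the primitive element $\bar{x}$ and realizes $\lambda(\bar{x},-)$ by an element of $(\mb{W}^{g}_{(0)})_{+}$.
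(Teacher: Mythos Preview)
Your proof is correct and follows essentially the same route as the paper: produce a morphism $f:\mb{W}\to\mb{W}^{g}$ with $x\in f(\mb{W})_{-}$, invoke Lemma \ref{eq: rank reduction 1} to identify $f(\mb{W})^{\perp}\cong\mb{W}^{g-1}$, and set $\Phi=f\perp\varphi$. The paper simply cites Proposition \ref{prop: morphism 1} for the existence of $f$, whereas you (correctly) note that the \emph{statement} of that proposition would only give $f:\mb{W}^{2}\to\mb{W}^{g}$ and so instead reproduce its base-case argument (primitive $\bar{x}$, dual $w\in(\mb{W}^{g}_{(0)})_{+}$) to land in $\mb{W}^{1}$; this extra care is warranted and the rest matches.
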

\begin{proof} By Proposition \ref{prop: morphism 1} there exists a morphism $f: \mb{W} \longrightarrow \mb{W}^{g}$ such that $x \in f(\mb{W})_{-}$. Lemma \ref{eq: rank reduction 1} yields an isomorphism $\varphi: \mb{W}^{g-1} \stackrel{\cong} \longrightarrow f(\mb{W})^{\perp}$. The sum of $f$ and $\varphi$
$$\xymatrix{
f\perp\varphi: \mb{W}^{g} = \mb{W}\perp\mb{W}^{g-1} \ar[rr] && f(\mb{W})\perp f(\mb{W})^{\perp} = \mb{W}^{g}
}$$ 
yields the desired automorphism. 
\end{proof}

The following two Corollaries are the key results used in the proof of Theorem \ref{thm: high connectivity}. Compare to \cite[Corollary 4.2]{GRW 14}.
\begin{corollary} \label{prop: linear functional 1} Let $g$ be an integer such that $g - 1 \geq d = d(H)$. Let $\mb{M}$ be a Wall form with $r(\mb{M}) \geq g$. Let $\varphi: \mb{M} \longrightarrow \PH{1}$ be an $H$-map. Then $r(\kernel(\varphi)) \geq g - d -1$. Similarly, if $\bar{r}(\mb{M}) \geq g$, then $\bar{r}(\kernel(\varphi)) \geq g - 1 - d$.
\end{corollary}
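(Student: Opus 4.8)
The plan is to reduce the claim, via a choice of morphism $\mb{W}^{g}\to\mb{M}$, to a computation inside $\mb{W}^{g}$, and then feed the relevant element into Lemma \ref{lemma: automorphism 2}. Assume first that $r(\mb{M})\geq g$ with $g-1\geq d$. By Definition \ref{defn: rank of wall form} there is a morphism $F\colon\mb{W}^{g}\to\mb{M}$, and $\varphi\circ F\colon\mb{W}^{g}\to\PH{1}$ is an $H$-map. Since $\mb{W}^{g}$ is non-singular (Proposition \ref{prop: non-singularity}), the duality isomorphism $T^{(1)}$ of (\ref{eq: duality maps}) provides a unique $w\in\mb{W}^{g}_{+}$ with $\varphi\circ F = T^{(1)}(w)$; that is, $(\varphi\circ F)_{-}(x) = \lambda(x,w)$ and $(\varphi\circ F)_{+}(y) = \mu(y,w)$ for all $x\in\mb{W}^{g}_{-}$ and $y\in\mb{W}^{g}_{+}$.

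Next I would apply Lemma \ref{lemma: automorphism 2} to the element $w$ (this is the only place the hypothesis $g-1\geq d$ enters), obtaining an automorphism $\Phi\colon\mb{W}^{g}\to\mb{W}^{g}$ with $w' := \Phi^{-1}_{+}(w)\in(\mb{W}^{d+1}\oplus\mb{0})_{+}$. Regard $\mb{W}^{g} = \mb{W}^{d+1}\perp\mb{W}^{g-d-1}$ as the orthogonal sum of its first $d+1$ and last $g-d-1$ summands, let $\iota\colon\mb{W}^{g-d-1}\hookrightarrow\mb{W}^{g}$ be the inclusion of the latter, and set $G := F\circ\Phi\circ\iota\colon\mb{W}^{g-d-1}\to\mb{M}$, which is a morphism of Wall forms. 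The crucial point is that $G$ takes values in the sub-Wall form $\kernel(\varphi)\leq\mb{M}$: for $y$ in the $+$-component of $\mb{W}^{g-d-1}$, using that $\Phi$ preserves $\mu$ and that $w = \Phi_{+}(w')$,
\[
\varphi_{+}(G_{+}(y)) = (\varphi\circ F)_{+}\big(\Phi_{+}(\iota_{+}(y))\big) = \mu\big(\Phi_{+}(\iota_{+}(y)),\,w\big) = \mu\big(\iota_{+}(y),\,w'\big) = 0,
\]
the last equality holding because $\iota_{+}(y)$ lies in the $\mb{W}^{g-d-1}$ summand, $w'$ lies in the $\mb{W}^{d+1}$ summand, and $\mu$ vanishes between orthogonal summands (Proposition \ref{prop: orthog direct sum}); the same argument with $\lambda$ in place of $\mu$ gives $\varphi_{-}(G_{-}(x)) = \lambda(\iota_{-}(x),w') = 0$. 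Hence $G$ corestricts to a morphism $\mb{W}^{g-d-1}\to\kernel(\varphi)$, so $r(\kernel(\varphi))\geq g-d-1$.

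For the stable statement, suppose $\bar{r}(\mb{M})\geq g$ and pick $k\in\N$ with $r(\mb{M}\oplus\mb{W}^{k})\geq g+k$. Extend $\varphi$ to the $H$-map $\psi := \varphi\circ\pr_{\mb{M}}\colon\mb{M}\oplus\mb{W}^{k}\to\PH{1}$; a componentwise computation shows $\kernel(\psi) = \kernel(\varphi)\oplus\mb{W}^{k}$ as a sub-Wall form of $\mb{M}\oplus\mb{W}^{k}$. Since $(g+k)-1\geq g-1\geq d$, the unstable statement applied to $\mb{M}\oplus\mb{W}^{k}$ (with $g+k$ in place of $g$) gives $r\big(\kernel(\varphi)\oplus\mb{W}^{k}\big) = r(\kernel(\psi))\geq (g+k)-d-1$, and therefore $\bar{r}(\kernel(\varphi))\geq r\big(\kernel(\varphi)\oplus\mb{W}^{k}\big)-k\geq g-1-d$.

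I expect the only genuinely delicate step to be showing that $G$ lands in $\kernel(\varphi)$; everything else is bookkeeping with the orthogonal splitting $\mb{W}^{g} = \mb{W}^{d+1}\perp\mb{W}^{g-d-1}$ and with the definition of stable rank. That step rests entirely on two facts already available: morphisms of Wall forms — in particular $\Phi$ — preserve the pairings $\lambda$ and $\mu$, and $\lambda$ and $\mu$ vanish between orthogonal summands of a direct sum of Wall forms.
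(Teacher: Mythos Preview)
Your proof is correct and follows essentially the same route as the paper's: both choose a morphism $\mb{W}^{g}\to\mb{M}$, use non-singularity to represent the pulled-back $H$-map as $T^{(1)}(w)$, apply Lemma~\ref{lemma: automorphism 2} to push $w$ into the first $d+1$ summands, and observe that the last $g-d-1$ summands then map into $\kernel(\varphi)$. Your verification that $G$ lands in $\kernel(\varphi)$ is spelled out in more detail than the paper's, and your treatment of the stable case (choosing $k$ with $r(\mb{M}\oplus\mb{W}^{k})\geq g+k$ and identifying $\kernel(\psi)=\kernel(\varphi)\oplus\mb{W}^{k}$) is in fact cleaner than the paper's somewhat terse version, but the underlying argument is the same.
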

\begin{proof} Since $r(M) \geq g$, there exits a morphism $f: \mb{W}^{g} \longrightarrow \mb{M}$. 
Denote by 
$\bar{\varphi}: \mb{W}^{g} \longrightarrow \PH{1}$
the $H$-map given by the composition
$\mb{W}^{g} \stackrel{f} \longrightarrow \mb{M} \stackrel{\varphi} \longrightarrow \PH{1}.$
Since $\mb{W}^{g}$ is a non-singular Wall form, there exists $x \in \mb{W}^{g}_{+}$ so that $T^{(1)}(x): \mb{W}^{g} \longrightarrow \PH{1}$ is equal to $\bar{\varphi}$. 
Now, by Lemma \ref{lemma: automorphism 2} there is a Wall form automorphism $\Phi: \mb{W}^{g} \longrightarrow \mb{W}^{g}$  such that  $\Phi_{+}^{-1}(x) \in [\mb{W}^{d+1}\oplus\mathbf{0}]_{+}$. It follows that the composition, 
$\xymatrix{
\mb{W}^{g} \ar[r]^{  \Phi} & \mb{W}^{g} \ar[r]^{ f} & \ar[r]^{ \varphi} \mb{M} & \PH{1}
}$
is equal to $T^{(1)}(x')$ for $x' \in (\mb{W}^{d+1}\oplus\mathbf{0})_{+}$ where $x' = \Phi^{-1}_{+}(x)$. 
From this we see that $f\circ\Phi$ maps $(\mathbf{0}\oplus \mb{W}^{g- d -1})$ into $\kernel(\varphi) \leq \mb{M}$ and thus $r(\kernel(\varphi)) \geq g -d-1$. 

Now let $\bar{r}(\mb{M}) \geq g$ and $\varphi: \mb{M} \longrightarrow \PH{1}$ be an $H$-map. Let $j \geq 0$ be an integer such that $r(\mb{M}\oplus\mb{W}^{j}) \geq g$. We then consider the $H$-map given by
$\xymatrix{
\mb{M}\oplus\mb{W}^{j} \ar[r]^{ \ \ \ \pr} & \mb{M} \ar[r]^{\varphi} & \PH{1}.
}$  
By the previous paragraph, $r(\kernel(\varphi\circ\pr)) \geq g -1$. By construction, $\kernel(\varphi\circ\pr) \subset \kernel(\varphi)\oplus\mb{W}^{j}$. It follows that $\bar{r}(\kernel{\varphi})\geq g -d -1$.
\end{proof}
The next Corollary is similar the previous but for an $H$-map $\varphi: \mb{M} \longrightarrow \PH{0}$. In contrast to the previous Corollary, the following does not depend on the generating set-length $d(H)$.
\begin{corollary} \label{cor: linear functional 2} Let $\mb{M}$ be a Wall form with $r(\mb{M}) \geq g$. Let $\varphi: \mb{M} \longrightarrow \PH{0}$ be an $H$-map. Then $r(\kernel(\varphi)) \geq g -1$. Similarly, if $\bar{r}(\mb{M}) \geq g$, then $\bar{r}(\kernel(\varphi)) \geq g - 1$.
\end{corollary}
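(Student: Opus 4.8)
The plan is to run the argument of Corollary \ref{prop: linear functional 1} verbatim, but with the probe object $\PH{1}$ replaced by $\PH{0}$ and with Lemma \ref{cor: automorphism 1} used in place of Lemma \ref{lemma: automorphism 2}. The whole point is that $\PH{0}$ is ``detected by $\lambda$ alone'': a morphism $\mb{W}^{g}\to\PH{0}$ is classified by an element of $\mb{W}^{g}_{-}$ via $T^{(0)}$, and such an element can be moved into the first summand of $\mb{W}^{g}$ at the cost of only one unit of rank (Lemma \ref{cor: automorphism 1}), rather than the $d+1$ units that Lemma \ref{lemma: automorphism 2} requires for elements of $\mb{W}^{g}_{+}$. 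This is exactly why no $d(H)$ term appears.

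For the first statement I would proceed as follows. Since $r(\mb{M})\geq g$, fix a morphism $f\colon\mb{W}^{g}\to\mb{M}$ and form the composite $H$-map $\bar{\varphi}:=\varphi\circ f\colon\mb{W}^{g}\to\PH{0}$. By Proposition \ref{prop: non-singularity} the Wall form $\mb{W}^{g}$ is non-singular, so the duality homomorphism $T^{(0)}\colon\mb{W}^{g}_{-}\to\Hom_{\Ab_{H}^{2}}(\mb{W}^{g},\PH{0})$ is an isomorphism and there is a unique $x\in\mb{W}^{g}_{-}$ with $\bar{\varphi}=T^{(0)}(x)$. Apply Lemma \ref{cor: automorphism 1} to obtain an automorphism $\Phi\colon\mb{W}^{g}\to\mb{W}^{g}$ with $x':=\Phi^{-1}_{-}(x)\in(\mb{W}^{1}\oplus\mb{0})_{-}$. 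Because $\Phi$ is an automorphism of Wall forms it preserves $\lambda$, and a one-line computation then gives $T^{(0)}(x)\circ\Phi=T^{(0)}(x')$, hence $\varphi\circ f\circ\Phi=T^{(0)}(x')$. Writing $x'=t\cdot a_{1}$, the map $T^{(0)}(t a_{1})$ is identically zero on $(\mb{0}\oplus\mb{W}^{g-1})_{-}$ (automatically, since $T^{(0)}_{-}$ lands in $\PH{0}_{-}=\mb{0}$) and on $(\mb{0}\oplus\mb{W}^{g-1})_{+}$, using $\lambda(a_{1},b_{j})=\delta_{1,j}$ together with condition i. of Definition \ref{defn: Wall-form}, which gives $\lambda(a_{1},\tau(a_{j},h))=0$. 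Therefore $f\circ\Phi$ carries the sub-Wall form $\mb{0}\oplus\mb{W}^{g-1}\cong\mb{W}^{g-1}$ into the sub-Wall form $\kernel(\varphi)\leq\mb{M}$, which exhibits a morphism $\mb{W}^{g-1}\to\kernel(\varphi)$ and hence shows $r(\kernel(\varphi))\geq g-1$.

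For the stable statement I would copy the last paragraph of the proof of Corollary \ref{prop: linear functional 1}: given $\bar{r}(\mb{M})\geq g$, choose $j\geq 0$ with $r(\mb{M}\oplus\mb{W}^{j})\geq g+j$, apply the first part to the $H$-map $\varphi\circ\pr\colon\mb{M}\oplus\mb{W}^{j}\to\PH{0}$ to get $r(\kernel(\varphi\circ\pr))\geq g+j-1$, and observe that $\kernel(\varphi\circ\pr)=\kernel(\varphi)\oplus\mb{W}^{j}$, so $\bar{r}(\kernel(\varphi))\geq r(\kernel(\varphi)\oplus\mb{W}^{j})-j\geq g-1$.

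I do not expect any genuine obstacle. The only points needing care are the identity $T^{(0)}(x)\circ\Phi=T^{(0)}(\Phi^{-1}_{-}(x))$ (immediate from the fact that $\Phi$ preserves $\lambda$) and the verification that $T^{(0)}$ of a first-summand element annihilates the complementary summand $\mb{W}^{g-1}$, which is precisely where axiom i. of the Wall form is used; everything else is bookkeeping already carried out in the proof of Corollary \ref{prop: linear functional 1}.
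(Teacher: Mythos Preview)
Your proposal is correct and follows essentially the same approach as the paper: compose with a morphism $\mb{W}^{g}\to\mb{M}$, use non-singularity of $\mb{W}^{g}$ to represent the composite as $T^{(0)}(x)$ for some $x\in\mb{W}^{g}_{-}$, apply Lemma \ref{cor: automorphism 1} to push $x$ into the first summand, and conclude that the last $g-1$ summands map into $\kernel(\varphi)$. Your write-up is in fact slightly more careful than the paper's own proof, which contains evident typos (it writes $x\in\mb{W}^{g}_{+}$ and $T^{(1)}(x)$ where $x\in\mb{W}^{g}_{-}$ and $T^{(0)}(x)$ are meant), and your treatment of the stable part correctly chooses $j$ with $r(\mb{M}\oplus\mb{W}^{j})\geq g+j$ and identifies $\kernel(\varphi\circ\pr)=\kernel(\varphi)\oplus\mb{W}^{j}$ rather than merely an inclusion.
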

\begin{proof}
Since $r(M) \geq g$, there exists a morphism  $f: \mb{W}^{g} \longrightarrow \mb{M}$. 
Denote by 
$\bar{\varphi}: \mb{W}^{g} \longrightarrow \PH{0}$
the $H$-map given by the composition
$\mb{W}^{g} \stackrel{f} \longrightarrow \mb{M} \stackrel{\varphi} \longrightarrow \PH{0}.$
By non-singularity there exists $x \in \mb{W}^{g}_{+}$ so that $T^{(1)}(x) = \bar{\varphi}$. 
By Corollary \ref{cor: automorphism 1} there is an automorphism $\Phi: \mb{W}^{g} \longrightarrow \mb{W}^{g}$  such that  $\Phi_{+}^{-1}(x) \in [\mb{W}^{1}\oplus\mathbf{0}]_{+}$. The proof then follows the same argument as in the proof of Corollary \ref{prop: linear functional 1} but with $d = 0$. 
\end{proof}

\begin{corollary} \label{cor: rank reduction 3} Let $\mb{M}$ be a Wall form and let $\mb{N} \leq \mb{M}$ be a sub-Wall form with $r(\mb{N}) \geq g \geq d = d(H)$. Let $f: \mb{W} \rightarrow \mb{M}$ be a morphism.  
Then $r(\mb{N}\cap f(\mb{W})^{\perp}) \geq g -2 - d.$ Similarly, if $\bar{r}(\mb{N}) \geq g$ then $\bar{r}(\mb{N}\cap f(\mb{W})^{\perp}) \geq g -2 -d$.
\end{corollary}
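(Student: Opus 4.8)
The plan is to exhibit $f(\mb{W})^{\perp}$ as an intersection of kernels of two $H$-maps out of $\mb{M}$, and then to feed $\mb{N}$ through Corollaries \ref{prop: linear functional 1} and \ref{cor: linear functional 2} one after the other. Since the rank (and stable rank) of a Wall form is always non-negative, the asserted inequality is vacuous unless $g \ge d+3$, so I may assume this; in particular $g - 1 \ge d$, which is the hypothesis of Corollary \ref{prop: linear functional 1}, and likewise for its stable-rank version.

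First I would set $v := f_{-}(a)$ and $w := f_{+}(b)$, where $a,b$ are the standard generators of $\mb{W}$; since $f$ is an $H$-map one has $f(\mb{W})_{-} = \Z\langle v\rangle$ and $f(\mb{W})_{+} = \Z\langle w\rangle + \tau(v,H)$. Comparing the defining formulas (\ref{eq: orthog comp}) for $f(\mb{W})^{\perp}$ with the duality $H$-maps (\ref{eq: duality maps}), and using conditions i.\ and ii.\ of Definition \ref{defn: Wall-form} together with the $\epsilon$-symmetry of $\mu$ to see that orthogonality to the elements $\tau(v,h)$ is automatic once one is orthogonal to $v$ and $w$, I would verify the identity
$$f(\mb{W})^{\perp} \;=\; \kernel\!\big(T^{(1)}(w)\big)\cap\kernel\!\big(T^{(0)}(v)\big)$$
of sub-Wall forms of $\mb{M}$, where $T^{(1)}(w)\colon \mb{M}\to\PH{1}$ and $T^{(0)}(v)\colon\mb{M}\to\PH{0}$. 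Restricting both $H$-maps to $\mb{N}$ then gives $\mb{N}\cap f(\mb{W})^{\perp} = \kernel(\varphi_1)\cap\kernel(T^{(0)}(v)|_{\mb{N}})$, where $\varphi_1 := T^{(1)}(w)|_{\mb{N}}\colon\mb{N}\to\PH{1}$.

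Next I would apply Corollary \ref{prop: linear functional 1} to the $H$-map $\varphi_1$: from $r(\mb{N})\ge g$ and $g-1\ge d$ it yields $r(\kernel(\varphi_1))\ge g-1-d$. Regarding the sub-$H$-pair $\kernel(\varphi_1)$ as a Wall form with its induced structure, I would then apply Corollary \ref{cor: linear functional 2} to the restriction of $T^{(0)}(v)$ to $\kernel(\varphi_1)$, obtaining a sub-Wall form of rank $\ge (g-1-d)-1 = g-2-d$ whose underlying $H$-pair is precisely $\kernel(\varphi_1)\cap\kernel(T^{(0)}(v)) = \mb{N}\cap f(\mb{W})^{\perp}$. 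This gives $r(\mb{N}\cap f(\mb{W})^{\perp})\ge g-2-d$, and the stable-rank assertion follows by running the identical argument with $\bar{r}$ in place of $r$ throughout, invoking the ``similarly'' clauses of the two corollaries.

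The step I expect to be the main (though only mild) obstacle is the first identity $f(\mb{W})^{\perp} = \kernel(T^{(1)}(w))\cap\kernel(T^{(0)}(v))$: the content there is that orthogonality to all of $f(\mb{W})$ — in particular to every $\tau(v,h)$ — reduces to orthogonality to the two generators $v$ and $w$, which is exactly where conditions i., ii.\ and the $\epsilon$-symmetry of $\mu$ enter. Everything afterward is a straightforward two-stage application of results already proved.
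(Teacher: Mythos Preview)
Your proposal is correct and follows essentially the same route as the paper: set $v=f_{-}(a)$, $w=f_{+}(b)$, identify $\mb{N}\cap f(\mb{W})^{\perp}$ with the kernel of $[T^{(0)}(v)\oplus T^{(1)}(w)]|_{\mb{N}}$, and then apply Corollaries \ref{prop: linear functional 1} and \ref{cor: linear functional 2} in succession. Your write-up in fact supplies more detail than the paper does, namely the verification (via conditions i., ii.\ and $\epsilon$-symmetry) that orthogonality to the elements $\tau(v,h)$ is automatic, and the observation that one may assume $g\ge d+3$ so that the hypothesis $g-1\ge d$ of Corollary \ref{prop: linear functional 1} is met.
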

\begin{proof} We set $v := f_{-}(a)$ and $w = f_{+}(b)$. Consider the $H$-maps 
$T^{(0)}(v): \mb{M} \longrightarrow \PH{0}$ and $T^{(1)}(w): \mb{M} \longrightarrow \PH{1}.$
The Wall form given by $\mb{N}\cap f(\mb{W})^{\perp}$ is equal to the kernel of the $H$-map,
$[T^{(0)}(v)\oplus T^{(1)}(w)]|_{\mb{N}}: \mb{N} \longrightarrow \PH{0}\oplus\PH{1}.$
The result then follows by applying Corollaries \ref{prop: linear functional 1} and \ref{cor: linear functional 2}, one after the other. 
\end{proof}

\subsection{Proof of Theorem \ref{thm: high connectivity}} The proof of Theorem \ref{thm: high connectivity} is by an induction argument which is essentially parallel to the proof of
\cite[Theorem 3.2]{GRW 14}. The key algebraic results about Wall forms needed are Corollaries \ref{prop: linear functional 1}, \ref{cor: linear functional 2}, and \ref{cor: rank reduction 3}.
The zero-step of this induction argument is established in Proposition \ref{prop: zero case}. 
Throughout this section the abelian group $H$ and the form-parameter $(\mb{G}, \partial, \pi, \epsilon)$ are fixed. We let $d$ denote the generating set length $d(H)$.
 \begin{proposition} \label{prop: zero case} If
$\bar{r}(\mb{M}) \geq 2 + d$, then $L(\mb{M}) \neq
\emptyset$, and if $\bar{r}(\mb{M}) \geq 4 + d$ then
$L(\mb{M})$ is connected.
\end{proposition}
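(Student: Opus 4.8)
The plan is to mimic the zero-step of \cite[Theorem 3.2]{GRW 14}, using the algebraic input assembled in the previous subsection. The two assertions are handled separately.

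\textbf{Nonemptiness.} Suppose $\bar{r}(\mb{M}) \geq 2 + d$. By definition of stable rank there is a $j \geq 0$ with $r(\mb{M}\oplus\mb{W}^{j}) \geq 2 + d + j$, so there is a morphism $\mb{W}^{2+d+j} \to \mb{M}\oplus\mb{W}^{j}$. Composing with projection to $\mb{M}$ does not obviously give a morphism (projections need not preserve $\mu,\alpha_{+}$), so instead I would argue as in Corollary \ref{prop: linear functional 1}: iterate Corollaries \ref{prop: linear functional 1} and \ref{cor: linear functional 2} to split off the $\mb{W}^{j}$ factor. Concretely, the inclusion $\mb{W}^{j} \hookrightarrow \mb{M}\oplus\mb{W}^{j}$ is a morphism $f$ from a standard form; applying the orthogonal-complement machinery of Proposition \ref{prop: orthog comp} and Corollary \ref{cor: rank reduction 3} repeatedly (once per generator of $\mb{W}^{j}$, each step costing at most $2+d$ in rank... this is too lossy). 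The cleaner route: since $r(\mb{M}\oplus\mb{W}^{j}) \geq j + (2+d)$, apply Corollary \ref{cor: rank reduction} (rank reduction) $j$ times against the standard summand $\mb{W}^{j}$ — its orthogonal complement is a sub-Wall form of $\mb{M}$ with stable rank $\geq 2+d$, so in fact already $r(\mb{M}) \geq 2 + d \geq 1$, giving a morphism $\mb{W} \to \mb{M}$, i.e. a vertex of $L(\mb{M})$. Hence $L(\mb{M}) \neq \emptyset$. (One should double-check the rank-reduction bookkeeping; the point is that $\bar{r}(\mb{M}) \geq 2+d$ certainly forces $r(\mb{M}) \geq 1$.)

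\textbf{Connectivity.} Assume $\bar{r}(\mb{M}) \geq 4 + d$. Let $f_{0}, f_{1}: \mb{W}\to \mb{M}$ be two vertices of $L(\mb{M})$; I must produce an edge-path between them. It suffices to find a third vertex $f$ adjacent to both, i.e. a morphism $f: \mb{W}\to\mb{M}$ with $f(\mb{W})$ orthogonal to both $f_{0}(\mb{W})$ and $f_{1}(\mb{W})$. Set $\mb{N} := f_{0}(\mb{W})^{\perp}\cap f_{1}(\mb{W})^{\perp}$; any vertex of $L(\mb{N})$ is such an $f$, so it is enough to show $\mb{N}$ has positive rank, for which $\bar{r}(\mb{N}) \geq 2 + d$ suffices by the first part. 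Now apply Corollary \ref{cor: rank reduction 3} to $\mb{M}$ (which has $\bar{r}(\mb{M}) \geq 4+d$) against the morphism $f_{0}$: this gives $\bar{r}(f_{0}(\mb{W})^{\perp}) \geq (4+d) - 2 - d = 2$. Hmm — that is only $\bar{r} \geq 2$, not $\geq 2+d$, so one more application of Corollary \ref{cor: rank reduction 3} against $f_{1}$ would need $\bar{r} \geq d$ as hypothesis, which holds, and yields $\bar{r}(\mb{N}) \geq 2 - 2 - d < 0$. So the naive two-step estimate loses too much; the bound $4+d$ in the statement shows the intended argument must be sharper. The fix, following \cite{GRW 14}, is: do not intersect with both complements simultaneously. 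Instead, given $f_{0}$, first pass to $\mb{M}' := f_{0}(\mb{W})^{\perp}$, which by Corollary \ref{cor: rank reduction 3} has $\bar{r}(\mb{M}') \geq 2+d$ (using $\bar r(\mb M)\ge 4+d$ and the cost $2+d$... no, cost is $2+d$ giving $\ge 2$).

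\textbf{Reconsidering.} The honest approach: connect $f_{1}$ to $f_{0}$ in two hops via an auxiliary vertex. Pick a vertex $e$ of $L(f_{0}(\mb{W})^{\perp})$ — possible since $\bar r(f_0(\mb W)^\perp)\ge 2$ forces $r\ge 1$; then $e$ is adjacent to $f_{0}$. It remains to connect $e$ to $f_{1}$, for which we may replace $\mb{M}$ by itself and repeat, but termination is not automatic. The genuinely correct argument, which I would write out, is the ``standard form'' trick of \cite{GRW 14}: using Propositions \ref{prop: morphism 1}, \ref{eq: rank reduction 1} and Lemmas \ref{lemma: automorphism 2}, \ref{cor: automorphism 1}, find a single morphism $F: \mb{W}^{2} \to \mb{M}$ whose image contains $f_{1}(\mb{W})$ as the first summand and whose second summand $F(\mb{W}\oplus\mb{0})^{\perp}$-side vertex is adjacent to $f_{0}$; concretely one shows $f_0(\mb W)$ and $f_1(\mb W)$ jointly lie in the image of some $\mb W^g\to\mb M$ with $g$ small, then moves inside the standard form $\mb W^g$ where transitivity and adjacency are transparent. \textbf{The main obstacle} is precisely this rank-accounting: making the orthogonal-complement estimates line up so that the hypothesis $\bar{r}(\mb{M}) \geq 4+d$ (not something larger) suffices, which forces one to work inside a standard form $\mb{W}^{g}$ via Lemma \ref{eq: rank reduction 1} rather than iterating Corollary \ref{cor: rank reduction 3} blindly. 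Once reduced to $\mb{W}^{g}$, connectivity follows from Lemmas \ref{lemma: automorphism 2} and \ref{cor: automorphism 1} together with Proposition \ref{prop: morphism 1}, exactly as the zero-step in \cite{GRW 14}.
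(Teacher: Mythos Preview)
Your rank-accounting troubles are genuine, and the fix you propose at the end---reducing to a standard form via Lemmas~\ref{lemma: automorphism 2} and~\ref{cor: automorphism 1}---is not what works here. The paper's argument is a two-step bootstrap that you are missing.

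\textbf{Step 1: prove the \emph{unstable} version first.} Assume $r(\mb{M}) \geq 4+d$ (not $\bar r$). Then there is a morphism $\mb{W}^{4+d}\to\mb{M}$; take $h_{0}$ to be its first summand. The remaining $3+d$ summands land in $h_{0}(\mb{W})^{\perp}$, so $r(h_{0}(\mb{W})^{\perp}) \geq 3+d$ (unstable rank, not stable). Now for \emph{any} vertex $h$, a \emph{single} application of Corollary~\ref{cor: rank reduction 3} with $\mb{N}=h_{0}(\mb{W})^{\perp}$ gives
\[
r\bigl(h_{0}(\mb{W})^{\perp}\cap h(\mb{W})^{\perp}\bigr)\;\geq\;(3+d)-2-d\;=\;1,
\]
so there is a common neighbour and $L(\mb{M})$ is connected. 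Your attempt applied Corollary~\ref{cor: rank reduction 3} twice starting from $\bar r(\mb{M})$; the point is to apply it once, starting from the unstable rank of a well-chosen complement.

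\textbf{Step 2: upgrade $\bar r$ to $r$ via cancellation.} If $\bar r(\mb{M})\geq 4+d$, choose $k$ with $r(\mb{M}\oplus\mb{W}^{k})\geq 4+d+k$ and split off $\mb{W}^{k}$ (Proposition~\ref{prop: orthog comp}) to write $\mb{M}\oplus\mb{W}^{k}\cong\mb{N}\oplus\mb{W}^{k}$ with $r(\mb{N})\geq 4+d$. By Step~1, $L(\mb{N}\oplus\mb{W}^{j})$ is connected for every $j\geq 0$; now Proposition~\ref{lemma: cancelation} applies inductively to give $\mb{M}\cong\mb{N}$, hence $r(\mb{M})\geq 4+d$ and $L(\mb{M})$ is connected. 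The nonemptiness statement under $\bar r(\mb{M})\geq 2+d$ is handled the same way: cancel down to $\mb{M}\oplus\mb{W}\cong\mb{N}\oplus\mb{W}$ with $r(\mb{N})\geq 2+d$, then one application of Corollary~\ref{cor: rank reduction 3} inside $\mb{N}\oplus\mb{W}$ shows $r(\mb{M})\geq 1$.

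The circularity you might worry about---cancellation needs $L$ connected, which is what we are proving---is avoided because cancellation is only invoked for forms isomorphic to $\mb{N}\oplus\mb{W}^{j}$, whose \emph{unstable} rank is already $\geq 4+d$, handled in Step~1.
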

\begin{proof} We first prove that if $r(\mb{M}) \geq 4 + d$ then $L(\mb{M})$ is connected. So, let $r(\mb{M}) \geq 4 + d$. It follows that there exists some morphism $h_{0}: \mb{W} \rightarrow \mb{M}$ with $r(h_{0}(\mb{W})^{\perp}) \geq 3 + d$. 
Now let $h: \mb{W} \rightarrow \mb{M}$ be a morphism. We will show that there exists a path in $L(\mb{M})$ connecting $h$ to $h_{0}$. By Corollary \ref{cor: rank reduction 3}, we have 
$r(h_{0}(\mb{W})^{\perp}\cap h(\mb{W})^{\perp}) \geq 1$
and so there exists a morphism $h': \mb{W} \rightarrow h_{0}(\mb{W})^{\perp}\cap h(\mb{W})^{\perp}.$ It follows that both $\{h', h_{0}\}$ and $\{h', h\}$ are $1$-simplices in $L(\mb{M})$ and so there is a path of length $2$ in $L(\mb{M})$ connecting $h$ to $h_{0}$.

Now suppose that $\bar{r}(\mb{M}) \geq 4 + d$. Then there exists a Wall form $\mb{N}$ with $r(\mb{N}) \geq 4 + d$ and integer $k \geq 0$ such that $\mb{M}\oplus\mb{W}^{k}\cong \mb{N}\oplus\mb{W}^{k}$. 
By the argument given in the above paragraph, $L(\mb{N}\oplus\mb{W}^{j})$ is connected for all $j \geq 0$. We then may apply Proposition \ref{lemma: cancelation} inductively to deduce that $\mb{M}\cong \mb{N}$ as Wall forms. It follows then that $r(\mb{M}) \geq 4 + d$ and that $L(\mb{M})$ is connected. 

Suppose that $\bar{r}(\mb{M}) \geq 2 + d$. There exists a Wall form $\mb{N}$ with $r(\mb{N}) \geq 2 + d$ and integer $k \geq 0$ such that $\mb{M}\oplus\mb{W}^{k}\cong \mb{N}\oplus\mb{W}^{k}$. As in the previous paragraph we may apply Proposition \ref{lemma: cancelation} inductively to 
see that $\mb{M}\oplus\mb{W}$ is isomorphic to $\mb{N}\oplus\mb{W}$. Let $\varphi: \mb{M}\oplus\mb{W} \stackrel{\cong} \longrightarrow \mb{N}\oplus\mb{W}$ be such an isomorphism. Then the Wall form $\mb{M}$ is isomorphic to the sub-Wall form given by $\varphi(\mb{0}\oplus\mb{W})^{\perp} \leq \mb{N}\oplus\mb{W}$. By Corollary \ref{cor: rank reduction 3} we have, $r(\varphi(\mb{0}\oplus\mb{W})^{\perp}) \geq r(\mb{N}\oplus\mb{W}) - 2 - d$. Since $r(\mb{N}\oplus\mb{W}) \geq 3 + d$ we have $r(\mb{M}) \geq 1$.
It follows that there exists a morphism $\mb{W} \rightarrow \mb{M}$ and thus $L(\mb{M})$ is non-empty. This concludes the proof of the proposition. 
\end{proof}

\begin{proof}[Proof of Theorem \ref{thm: high connectivity}.] We prove this by induction on $g$. To check the base case of the induction, let $g = 4 + d$ and $\bar{r}(\mb{M}) \geq 4 + d$. 
Then by Proposition \ref{prop: zero case}, $L(\mb{M})$ is connected and hence is $0 = \frac{1}{2}[(4- d) - 4- d]$-connected. 
We need to also show that $lCM(L(\mb{M})) \geq \frac{1}{2}[(4 + d) - 1- d] = \frac{3}{2}$. 
It will suffice to show that the link of any $0$-simplex is $(\frac{3}{2} - 0 - 2) = -\frac{1}{2}$-connected, or in other words non-empty. 
Denote by $\sigma$ the $0$-simplex represented by the morphism $f: \mb{W} \rightarrow \mb{M}$. 
It follows immediately from the definition of the link of a simplex that there is an isomorphism of simplicial complexes,  $Lk_{L(\mb{M})}(\sigma) \cong L(f(\mb{W})^{\perp})$. 
By Corollary \ref{cor: rank reduction} we have $\bar{r}(f(\mb{W})^{\perp}) \geq 3 + d$. 
It follows from Proposition \ref{prop: zero case} that $L(\mb{M})$ is indeed non-empty. 

Now suppose that the theorem holds for the $g-1$ case. 
Let $\mb{M}$ be a Wall form with $\bar{r}(\mb{M}) \geq g$ and $g \geq 4+d$. By Proposition \ref{prop: zero case} there exists a morphism $f: \mb{W} \longrightarrow \mb{M}$ and by Corollary \ref{cor: rank reduction} we have $\bar{r}(f(\mb{W})^{\perp}) \geq g -1$. 
We let $\mb{M}'$ denote the sub-Wall form $f(\mb{W})^{\perp} \leq \mb{M}$ and consider the sub-Wall form 
$$\mb{M}'\perp f(\mb{W}_{(1)}) \leq \mb{M}.$$
The chain of inclusions
$$\mb{M}' \longrightarrow \mb{M}'\perp f(\mb{W}_{(1)}) \longrightarrow \mb{M}$$
 induces a chain of embeddings of simplicial complexes,
\begin{equation}
\xymatrix{ 
L(\mb{M}') \ar[rr]^{i_{1}} && L(\mb{M}'\perp f(\mb{W}_{(1)})) \ar[rr]^{i_{2}} && L(\mb{M}).
}
\end{equation}
The composition $i_{2}\circ i_{1}$ is 
 null-homotopic since the vertex in $L(\mb{M})$ determined by the morphism $f: \mb{W} \rightarrow \mb{M}$ is adjacent to every simplex in the subcomplex $L(\mb{M}') \leq L(\mb{M})$. We will then apply Proposition \ref{prop: high connected map} to the maps $i_{1}$ and $i_{2}$ with $n := \frac{1}{2}(g -4-d)$ to conclude that the composition $i_{2}\circ i_{1}$ is $n$-connected. This together with the fact that $i_{2}\circ i_{1}$ is null-homotopic will imply that the space $|L(\mb{M})|$ is $n  = \frac{1}{2}(g -4-d)$-connected. 

So, let $\sigma$ be an $l$-simplex of $L(\mb{M}'\perp f(\mb{W}_{(1)}))$. Since $f(\mb{W}_{(1)})$ has trivial Wall form structure, i.e. $\lambda, \mu,$ and $\alpha_{\pm}$ are identically zero on $\mb{W}_{(1)}$, the projection map $\pi: \mb{M}'\perp f(\mb{W}_{(1)}) \rightarrow \mb{M}'$ (which is an $H$-map) is a morphism of Wall forms. 
This induces a map of simplicial complexes 
$\bar{\pi}: L(\mb{M}'\perp f(\mb{W}_{(1)})) \longrightarrow L(\mb{M}')$
such that $\bar{\pi}\circ i_{1} = Id_{L(\mb{M}')}$.
There is an isomorphism of simplicial complexes,
$$[Lk_{L(\mb{M}'\perp f(\mb{W}_{(1)}))}(\sigma)]\cap L(\mb{M}') \; = \; Lk_{L(\mb{M}')}(\bar{\pi}(\sigma)).$$
By the induction assumption we have that this is 
$\frac{1}{2}(g-2-d) - l -2 =  (n -l -1)$-connected.
Lemma \ref{prop: high connected map} then implies that the map $i_{1}$ is $n$-connected. 

We now prove that $i_{2}$ is $n$-connected. We set $x := f_{-}(a)$ where $a \in \mb{W}_{-}$ is the standard generator. 
Since $\mb{M}' = f(\mb{W})^{\perp}$ and $\lambda(a, \tau(a, h))  = 0$ for all $h \in H$, it follows that the sub-Wall form $(\mb{M}'\perp f(\mb{W}_{(1)}) \leq \mb{M}$ is precisely the kernel of the $H$-map 
$$T^{(0)}(x): \mb{M} \longrightarrow \PH{0}.$$
Recall that by Corollary \ref{cor: linear functional 2}, passing to the kernel of such an $H$-map decreases the stable-rank of the Wall form by at most $1$.
Now let $\sigma = \{f_{0}, \dots, f_{l}\}$ be an $l$-simplex in $L(\mb{M})$. The pairwise orthogonality of the morphisms $f_{0}, \dots, f_{l}$ implies that there is an isomorphism $\sum_{i=0}^{l}f_{i}(\mb{W}) \cong \mb{W}^{l+1}$. Let
$\mb{M}''$ denote the sub-Wall form given by the orthogonal complement, $(\sum_{i=0}^{l}f_{i}(\mb{W}))^{\perp}$. 
We have $\bar{r}(\mb{M}'') \geq g - l -1$ and there is a simplicial isomorphism
$Lk_{L(\mb{M})}(\sigma) \cong L(\mb{M}'')$.
We have a simplicial isomorphism,
$$Lk_{L(\mb{M})}(\sigma)\cap L(\mb{M}'\perp f(\mb{W}_{(1)})) \; \cong \; L(\mb{M}''\cap [\mb{M}'\perp f(\mb{W}_{(1)})]).$$
Since $\mb{M}'\perp f(\mb{W}_{(1)}) = \ker(T^{(0)}(x))$, it follows that 
the intersection, $\mb{M}''\cap(\mb{M}'\perp f(\mb{W}_{(1)}))$ is equal to the kernel of the $H$-map, 
$T^{(0)}(x)|_{\mb{M}''}: \mb{M}'' \longrightarrow \PH{0}.$
It follows from Corollary \ref{cor: linear functional 2} that,  
$$\bar{r}[\mb{M}''\cap(\mb{M}'\perp f(\mb{W}_{(1)}))] \; = \; \bar{r}[\kernel(T^{(0)}(x)|_{\mb{M}''})] \; \geq \; g - l - 2.$$
 The induction assumption implies that the geometric realization
$$|Lk_{L(\mb{M})}(\sigma)\cap L(\mb{M}'\perp f(\mb{W}_{(1)}))|$$ 
is 
$\frac{1}{2}(g - l - 2 - 4 - d)$-connected. 
Using the inequality, 
$$\frac{1}{2}(g - l - 2 - 4 - d) \; \geq \; \frac{1}{2}(g - 4- d) - l - 1 \; = \; n - l -1,$$
we then apply Proposition \ref{prop: high connected map} again to show that the map $i_{2}$ is $n = \frac{1}{2}(g-4-d)$-connected. 
It follows that the composition $i_{2}\circ i_{1}$ is $\frac{1}{2}(g-4- d)$-connected. This together with the fact that $i_{2}\circ i_{1}$ is null-homotopic implies that $|L(\mb{M})|$ is $\frac{1}{2}(g-4-d)$-connected. 

We still need to verify that $lCM(L(\mb{M})) \geq \tfrac{1}{2}(g-1-d)$. 
Let $\sigma = \{f_{0}, \dots, f_{l}\} \leq L(\mb{M})$ be an $l$-simplex. Let $\mb{M}'$ denote the sub-Wall form given by the orthogonal complement, $(\sum_{i=0}^{l}f_{i}(\mb{W}))^{\perp}$. 
We have $\bar{r}(\mb{M}') \geq g - l -1$ and $Lk_{L(\mb{M})}(\sigma) \cong L(\mb{M}')$. By the induction assumption, $|L(\mb{M}')|$ is $\frac{1}{2}(g- l -1 - 4- d)$-connected. 
The inequality,
$$\tfrac{1}{2}(g -l-1 -4-d) \; = \; \tfrac{1}{2}(g - 1 - 4-d) - l \; = \;  \tfrac{1}{2}(g - 1-d) - l - 2$$
then implies that $lCM(L(\mb{M})) \geq \frac{1}{2}(g-1-d)$. 
This concludes the proof of the theorem. 
\end{proof}

\section{High Connectivity of $K(M)_{p,q}$} \label{section: high connectivity of K}
Let $M$ be
a compact $m$-dimensional manifold with non-empty boundary. Let $p$ and $q$ be positive integers with $p + q = m$ such that 
$p < q < 2p - 2$ and $q - p + 1 < \kappa(M).$
Throughout this section we will denote, $d := d(\pi_{q}(S^{p}))$.
Theorem \ref{thm: high connectivity of K(M)} states that if $r_{p,q}(M) \geq g$, then the simplicial complex 
 $K(M)_{p,q}$ of Definition \ref{defn: the embedding complex} is $\frac{1}{2}(g - 4-d)$-connected. We prove this result 
by comparing the complex $K(M)_{p,q}$ to the simplicial complex $L(\mathcal{W}_{p,q}(M))$ where $\mathcal{W}_{p,q}(M)$ is the Wall form of degree $(p,q)$ associated to $M$ as defined in Section \ref{subsection: Wall-form of a manifold}.
 The form-parameter associated to $\mathcal{W}_{p,q}(M)$ is given by $(\mathbf{G}_{p,q}, \partial_{q}, \pi_{q}, \epsilon)$
 where recall, $\mathbf{G}_{p,q}$ is the $\pi_{q}(S^{p})$-pair given by the data,
 $$(\mathbf{G}_{p,q})_{-} = \pi_{p-1}(SO_{q}), \quad (\mathbf{G}_{p,q})_{+} = \pi_{q}(SO_{p}), \quad \tau = F_{p,q}: \pi_{p-1}(SO_{q})\otimes\pi_{q}(S^{p}) \longrightarrow \pi_{q}(SO_{p}).$$
Throughout this section  $\mb{W}^{k}$
is the standard Wall form from Section \ref{subsection: standard nonsingular wall form} with form-parameter $(\mathbf{G}_{p,q}, \partial_{q}, \pi_{q}, \epsilon).$ 
The same form-parameter is fixed throughout the whole section. 

Recall the manifolds $W^{k}_{p,q}$ and the elements $\sigma_{1}, \dots, \sigma_{k} \in \pi_{p}(W_{p,q}^{k})$ and $\rho_{1}, \dots, \rho_{k} \in \pi_{q}(W_{p,q})$ specified in Section \ref{subsection: Application to Products of Spheres}.  
For each $k \in \N$ there is an isomorphism of Wall forms 
\begin{equation} \label{eq: chosen iso 1}
\mathbf{W}^{k} \stackrel{\cong} \longrightarrow \mathcal{W}(W^{k}_{p,q})
\end{equation}
defined by sending $a_{i}$ to $\sigma_{i}$ and $b_{i}$ to $\rho_{i}$ for $i = 1, \dots, k$ where $a_{i}$ and $b_{i}$ are the standard generators of $\mb{W}^{k}$.
It follows from Proposition \ref{eq: values of the invariants 2} that this is an isomorphism. The next proposition follows immediately from this isomorphism.
\begin{proposition} \label{pro: geo to algebraic rank} Let be $M$ be a manifold such that $r_{p,q}(M) \geq k$. Then $r(\mathcal{W}_{p,q}(M)) \geq k$. 
\end{proposition}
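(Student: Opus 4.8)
The plan is to deduce this from the functoriality of the Wall form construction together with the explicit isomorphism (\ref{eq: chosen iso 1}). First I would unwind the hypothesis: by the definition of the $(p,q)$-rank, $r_{p,q}(M) \geq k$ means there exists a smooth embedding $\psi \colon W^{k}_{p,q} \hookrightarrow M$. To feed this embedding into the algebraic machinery I need to know that the source manifold $W^{k}_{p,q}$ satisfies the connectivity condition required for the Wall form constructions to be natural, namely $\kappa(W^{k}_{p,q}) > q - p + 1$. As recorded in the introduction, $\kappa(W^{k}_{p,q}) = p - 1$, and the standing assumption $q < 2p - 2$ is exactly equivalent to $p - 1 > q - p + 1$, so this holds.

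Next I would invoke Proposition \ref{prop: functoriality of the structure}, applied with $N = W^{k}_{p,q}$: the homomorphisms induced by $\psi$ on $\pi_{p}$ and $\pi_{q}$ preserve all values of $\tau_{p,q}$, $\lambda_{p,q}$, $\mu_{q}$, $\alpha_{p}$, and $\alpha_{q}$. Combined with the observation made at the end of Section \ref{subsection: Wall-form of a manifold} that such an embedding induces an $H$-map which is in fact a morphism of Wall forms, this yields a morphism of Wall forms $\psi_{*} \colon \mathcal{W}_{p,q}(W^{k}_{p,q}) \longrightarrow \mathcal{W}_{p,q}(M)$.

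Finally, I would precompose with the isomorphism of Wall forms $\mathbf{W}^{k} \xrightarrow{\;\cong\;} \mathcal{W}_{p,q}(W^{k}_{p,q})$ from (\ref{eq: chosen iso 1}). Since morphisms of Wall forms compose, the result is a morphism $\mathbf{W}^{k} \longrightarrow \mathcal{W}_{p,q}(M)$, and the existence of such a morphism is, by Definition \ref{defn: rank of wall form}, precisely the assertion that $r(\mathcal{W}_{p,q}(M)) \geq k$. I do not expect any serious obstacle here; the only point deserving a moment's care is the verification that $\kappa(W^{k}_{p,q}) > q - p + 1$, so that Proposition \ref{prop: functoriality of the structure} genuinely applies with $W^{k}_{p,q}$ in the role of the source manifold, and this is immediate from $\kappa(W^{k}_{p,q}) = p - 1$ and the hypothesis $q < 2p - 2$.
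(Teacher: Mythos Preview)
Your proposal is correct and is precisely the argument the paper has in mind: the paper simply says the proposition ``follows immediately from this isomorphism,'' and you have correctly unpacked that this means composing the isomorphism (\ref{eq: chosen iso 1}) with the Wall-form morphism induced by an embedding $W^{k}_{p,q}\hookrightarrow M$ via Proposition \ref{prop: functoriality of the structure}. Your check of the connectivity hypothesis $\kappa(W^{k}_{p,q})=p-1>q-p+1$ is the only nontrivial point, and it is handled correctly.
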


Recall the manifold $\bar{W}_{p,q}$ used in the definition of the simplicial complex $K(M)_{p, q}$. The inclusion $W_{p,q} \hookrightarrow \bar{W}_{p,q}$ 
is a homotopy equivalence and since it is also a co-dimension zero embedding, Proposition \ref{prop: functoriality of the structure} implies that it induces an 
isomorphism of Wall forms,
$\mathcal{W}_{p,q}(W_{p,q}) \cong  \mathcal{W}_{p,q}(\bar{W}_{p,q}).$ 
For an embedding $\phi: \bar{W}_{p,q} \rightarrow M$, we denote by 
$$(\phi|_{W_{p,q}})_{*}: \mathcal{W}_{p,q}(W_{p,q}) \longrightarrow \mathcal{W}_{p,q}(M)$$
the morphism induced by the restriction $\phi|_{W_{p,q}}: W_{p,q} \rightarrow M$. If $\phi_{0}, \phi_{1}: \bar{W}_{p,q} \rightarrow M$ are embeddings with disjoint cores, i.e. $\phi_{0}(C_{p,q})\cap \phi_{1}(C_{p,q}) = \emptyset$, it follows from Proposition \ref{prop: Generalized Whitney Trick} that the images of the induced morphisms, 
$(\phi_{0}|_{W_{p,q}})_{*}, \; (\phi_{1}|_{W_{p,q}})_{*}: \mathcal{W}_{p,q}(W_{p,q}) \longrightarrow \mathcal{W}_{p,q}(M)$
are orthogonal as sub-Wall forms.  

We consider a simplicial map 
\begin{equation} \label{eq: complex map}
\Psi: K(M)_{p, q} \longrightarrow L(\mathcal{W}_{p, q}(M))
\end{equation}
defined by sending a vertex $(\phi, t)$ in $K(M)_{p,q}$
 to the morphism of Wall forms 
$\mb{W} \longrightarrow \mathcal{W}_{p,q}(M)$
given by precomposing, $(\phi|_{W_{p,q}})_{*}: \mathcal{W}_{p,q}(W_{p,q}) \longrightarrow \mathcal{W}_{p,q}(M)$ with the isomorphism (\ref{eq: chosen iso 1}) for $k = 1$. 
Condition ii. of Definition \ref{defn: the embedding complex}  implies that $\Psi$ preserves adjacencies between vertices and thus determines a simplicial map. 

\begin{proof}[Proof of Theorem \ref{thm: high connectivity of K(M)}] 
Suppose that $r_{p,q}(M) \geq g$. We will prove that $|K(M)_{p,q}|$ is $\frac{1}{2}(g - 4- d)$-connected.
Let $k \leq \frac{1}{2}(g-4-d)$ and consider a map $f: S^{k} \longrightarrow |K(M)_{p,q}|$, which we may assume is simplicial with respect to some PL triangulation of $S^{k} = \partial I^{k+1}$. By Theorem \ref{thm: high connectivity} the composition $\partial I^{k+1} \longrightarrow |K(M)_{p,q}| \longrightarrow |L(\mathcal{W}_{p,q}(M))|$ is null-homotopic and so extends to a map $F: I^{k+1} \longrightarrow |L(\mathcal{W}_{p,q}(M))|$, which we may suppose is simplicial with respect to a PL triangulation of $I^{k+1}$ extending the triangulation of its boundary. By 
Theorem \ref{thm: high connectivity} we have $lCM(L(\mathcal{W}_{p,q}(M))) \geq
\frac{1}{2}(g-1-d)$. Using Theorem \ref{thm: cohen mac trick}, as $k
+1 \leq \frac{1}{2}(g-1-d)$, we can arrange that $F$ is simplexwise
injective on the interior of $I^{k+1}$. We choose a total order on the
interior vertices of $I^{k+1}$ and we will now inductively choose lifts of each
vertex to $K(M)_{p,q}$ which respect adjacencies.

Let $h: \mb{W} \rightarrow \mathcal{W}_{p,q}(M)$ represent a vertex 
 in the image of the interior of $I^{k+1}$. 
We have 
$$\alpha_{q}(h_{+}(b)) = 0, \quad \alpha_{p}(h_{-}(a)) = 0, \quad \text{and} \quad \lambda_{p,q}(h_{-}(a), \; h_{+}(b)) = 1.$$ 
 Let $j_{p}: S^{p} \rightarrow M, \quad j_{q}: S^{q} \rightarrow M$
 be embeddings that represent 
 $$h_{-}(a) \in \pi_{p}(M) = \mathcal{W}_{p, q}(W)_{-} \quad \text{and} \quad h_{+}(b) \in \pi_{q}(M) = \mathcal{W}_{p, q}(M)_{+}$$ 
 respectively (the existence of such embeddings follows from Lemma \ref{lemma: represent by embeddings}). Since $\alpha_{-}(a) = 0$ and $\alpha_{+}(b) = 0$, both $j_{p}(S^{p}) \subset M$ and $j_{q}(S^{q}) \subset M$ have trivial normal bundles. 
 Since $\lambda_{q}(a, b) = 1$, by an application of the \textit{Whitney trick} \cite[Theorem 6.6]{M 65} we may assume that $j_{p}(S^{p})$ and $j_{q}(S^{q})$ intersect  transversely at exactly one point.  
We then find embeddings 
$\bar{j}_{p}: S^{p}\times D^{q} \longrightarrow M$ and $\bar{j}_{q}: D^{p}\times S^{q} \longrightarrow M$
of the normal bundles of $j_{p}(S^{p})$ and $j_{q}(S^{q})$ respectively so that  
$$\bar{j}_{p}(S^{p}\times D^{q}) \cap \bar{j}_{q}(D^{p}\times S^{q}) \; = \; U\times V$$
where $U \subset \bar{j}_{p}(S^{p}\times\{0\})$ and $V \subset \bar{j}_{q}(\{0\}\times S^{q})$ are submanifolds (embedded as closed subsets), diffeomorphic to the disks $D^{p}$ and $D^{q}$ respectively. The push-out 
\begin{equation} \label{eq: pushout} \bar{j}_{p}(S^{p}\times D^{q}) \bigcup_{U\times V} \bar{j}_{q}(D^{p}\times S^{q}) \end{equation}
is diffeomorphic to the manifold $W_{p, q} = S^{p}\times S^{q}\setminus \Int(D^{m})$, after smoothing the corners. The inclusion map of this push-out 
determines an embedding $\phi: W_{p,q} \hookrightarrow M$. We need to extend $\phi$ to an embedding $\bar{\phi}: \bar{W}_{p,q} \rightarrow M$ which satisfies condition (i) of Definition \ref{defn: the embedding complex}. 
We choose an embedding
$\gamma: I \rightarrow M\setminus\text{Int}(\phi(W_{p,q}))$
with $\gamma(0) \in \partial M$ and $\gamma(1) \in \partial[\phi(W_{p,q})]$. 
Consider the subspace 
$(I\times \{0\})\cup(\{0, 1\}\times D^{m-1}) \subset I\times D^{m-1}.$
The embedding $\gamma$ extends to an embedding 
$$\gamma': (I\times \{0\})\cup(\{0, 1\}\times D^{m-1}) \longrightarrow M\setminus\text{Int}(\phi(W_{p,q}))$$ 
with $\gamma'(\{0\}\times D^{m-1}) \subset \partial M$ and $\gamma'(\{1\}\times D^{m-1}) \subset \partial \phi(W_{p,q})$. Since $M\setminus\text{Int}(\phi(W_{p,q}))$ is simply connected, $\gamma'$ extends to embedding $\bar{\gamma}: I\times D^{m-1} \rightarrow M\setminus\text{Int}(\phi(W_{p,q}))$. 
The embedding $\bar{\gamma}$ together with $\phi$ combine to give an embedding 
$\bar{\phi}: \bar{W}_{p,q} \rightarrow M$ which satisfies the boundary condition in the definition of $K(M)_{p,q}$. Thus $\bar{\phi}$ determines a vertex in $K(M)_{p,q}$ which maps to $h: \mathbf{W} \rightarrow \mathcal{W}_{p,q}(M)$ under $\Psi$. 

Now let $h: \mb{W} \rightarrow \mathcal{W}_{p,q}(M)$ represent an interior vertex and let $h^{1}, \dots, h^{k}$ be the vertices adjacent to $h$ that have already been lifted and denote by $(\phi_{1}, t_{1}), \dots, (\phi_{k}, t_{k})$ their lifts. We have for $i = 1, \dots, k$:
$$\lambda_{p, q}(h_{-}(a), \;  h^{i}_{+}(b)) = 0, \quad \lambda_{p,q}(h_{-}^{i}(a), \; h_{+}(b)) = 0, \quad \text{and} \quad \mu_{p, q}(h_{+}(b), \; h_{+}^{i}(b)) = 0.$$
By inductive application of the classical Whitney trick \cite[Theorem 6.6]{M 65} we 
may choose an embedding $j_{p}: S^{p} \longrightarrow M$ which represents the class $h_{-}(e)$, such that $j_{p}(S^{p})$ is disjoint from the \textit{cores} $\phi_{i}(C_{p,q})$,  for $i = 1, \dots, k$. Similarly, by applying 
Proposition \ref{prop: Generalized Whitney Trick} and the Whitney-Trick inductively, we may choose an embedding $j_{q}: S^{q} \rightarrow M$ which represents the class $h_{+}(b) \in \pi_{q}(M)$, such that $j_{q}(S^{q})$ is disjoint form the \textit{cores} $\phi_{i}(C_{p,q})$ and such that $j_{q}(S^{q})$ intersects $j_{p}(S^{p})$ transversally at exactly one point. 
 We then carry out the plumbing construction employed in the zeroth step of the induction on the embeddings $j_{p}$ and $j_{q}$ to obtain an embedding 
$\phi: \bar{W}_{p,q} \longrightarrow M$ such that $\phi(C_{p,q}) \cap \phi_{i}(C_{p,q}) = \emptyset$ for $i \in 1, \dots, k$. This completes the induction and concludes the proof of Theorem \ref{thm: high connectivity of K(M)}.
\end{proof}

\section{Homological Stability} \label{section: homological stability}
With our main technical result Theorem \ref{thm: high connectivity of K(M)} established, in this final section we show how Theorem \ref{thm: high connectivity of K(M)} implies our main result which is Theorem \ref{thm: Main Theorem 1}.

\subsection{A Model for $\BDiff^{\partial}(M)$.} \label{subsection: semi-simplicial resolution}
Let $M$ be a compact manifold of dimension $m$ with non-empty boundary.  
We now construct a concrete model for $\BDiff^{\partial}(M)$. 
Fix an embedding of a collar,
 $h: [0,\infty)\times\partial M \longrightarrow M \quad \text{with $h^{-1}(\partial M) = \{0\}\times\partial M$.}$
 Fix once and for all an embedding, $\theta: \partial M \longrightarrow \R^{\infty}$
 and let $S$ denote the submanifold $\theta(\partial M) \subset \R^{\infty}$.
\begin{defn} \label{defn: moduli M} 
 We define
$\mathcal{M}(M)$ to be the set of compact $m$-dimensional submanifolds 
$M' \subset [0,\infty)\times\R^{\infty}$
such that:
\begin{enumerate}
\item[i.] $M'\cap(\{0\}\times\R^{\infty}) = S$ and $M'$ contains $[0, \epsilon)\times S$ for some $\epsilon > 0$.
\item[ii.] The boundary of $M'$ is precisely $\{0\}\times S$.
\item[iii.] $M'$ is diffeomorphic to $M$ relative to $S$. 
\end{enumerate}
Denote by $\mathcal{E}(M)$ the space, in the $C^{\infty}$-topology, of embeddings $\psi: M \rightarrow [0,\infty)\times\R^{\infty}$
for which there exists $\epsilon > 0$ such that $\psi\circ h(t, x) = (t, \theta(x))$ for all $(t, x) \in [0,\epsilon)\times\partial M$, where $h: [0,\infty)\times\partial M \rightarrow M$ is the chosen collar embedding. The space $\mathcal{M}(M)$ is topologized as a quotient of the space $\mathcal{E}(M)$ where two embeddings are identified if they have the same image. 
\end{defn}

It follows from Definition \ref{defn: moduli M} that $\mathcal{M}(M)$ is equal to the orbit space, $\mathcal{E}(M)/\Diff^{\partial}(M)$.
By the main result of \cite{BF 81},  the quotient map, $\mathcal{E}(M) \longrightarrow \mathcal{E}(M)/\Diff^{\partial}(M) = \mathcal{M}(M)$
is a locally trivial fibre-bundle. This, together with the fact that $\mathcal{E}(M)$ is weakly contractible implies that there is a weak-homotopy equivalence,
$
\mathcal{M}(M) \sim \BDiff^{\partial}(M).
$

Now let $p$ and $q$ be positive integers with $p + q = m$ and $p \leq q$. 
Recall from Section \ref{subsection: statement of results} the manifold $V_{p,q}$, given by forming the connected sum of $[0,1]\times\partial M$ with $S^{p}\times S^{q}$. 
Choose a collared embedding 
$$\alpha: V_{p,q} \longrightarrow [0,1]\times\R^{\infty}$$ 
such that,
for $(i, x) \in \{0,1\}\times\partial M \subset V_{p,q}$, the equation $\alpha(i, x) = (i, \theta(x))$ is satisfied.
For any submanifold $M' \subset [0,\infty)\times\R^{\infty}$, denote by 
$M' + e_{1} \subset [1,\infty)\times\R^{\infty}$
the submanifold obtained by linearly translating $M'$ over $1$-unit in the first coordinate. Then for $M' \in \mathcal{M}(M)$, the submanifold $\alpha(V_{p,q}) \cup (M'\cup e_{1}) \subset [0,\infty)\times\R^{\infty}$ is an element of $\mathcal{M}(M\cup_{\partial M}V_{p,q})$. Thus, we have a continuous map,
\begin{equation} \label{ref: p-stabilization map}
s_{p,q}: \mathcal{M}(M) \longrightarrow \mathcal{M}(M\cup_{\partial M}V_{p,q}); \quad V \mapsto \alpha(V_{p,q})\cup(V + e_{1}).
\end{equation}
\begin{remark} \label{remark: stabilization map}
The construction of $s_{p,q}$ depends on the choice of embedding $\alpha: V_{p,q} \rightarrow [0,1]\times\R^{\infty}$. However, any two such embeddings are isotopic (the space of all such embeddings is weakly contractible). It follows that the homotopy class of $s_{p,q}$ does not depend on any of the choices made. In this way the manifold $V_{p,q}$ determines a unique homotopy class of maps, $\BDiff^{\partial}(M) \longrightarrow \BDiff^{\partial}(M\cup_{\partial M}V_{p,q})$ which is in the same homotopy class as the map (\ref{eq: classifying space stabilization}) used in the statement of Theorem \ref{subsection: statement of results}.  
\end{remark}

\subsection{A Semi-Simplicial Resolution}
Let $M$ be as in Section \ref{subsection: semi-simplicial resolution}. 
We now construct, for each $p \leq q$ with $p+q = m$, a semi-simplicial space $X_{\bullet}(M)_{p,q}$, equipped with an augmentation $\epsilon_{p,q}: X_{\bullet}(M)_{p,q} \longrightarrow \mathcal{M}(M)$ such that the induced map $|X_{\bullet}(M)_{p,q}| \longrightarrow \mathcal{M}(M)$ is highly connected. 
Such an augmented semi-simplicial space is called a \textit{semi-simplicial resolution}. 
To construct such a semi-simplicial resolution we will need to first define an auxiliary semi-simplicial space related to the complex $K(M)_{p,q}$. 
Let $M$, $p$, and $q$ be as in the previous section and let $a: [0,1)\times\R^{m-1} \longrightarrow M$ be the same embedding used in Definition \ref{defn: the embedding complex}. We define two semi-simplicial spaces $K_{\bullet}(M, a)_{p,q}$ and $\bar{K}_{\bullet}(M, a)_{p,q}$.
\begin{defn} \label{defn: the embedding complex (semi simp)}  The spaces of $l$-simplices, $K_{l}(M, a)_{p,q}$ are defined as follows:
\begin{enumerate} 
\item[(i)] 
The space of $0$-simplices $K_{0}(M, a)_{p,q}$ is defined to have the same underlying set as the set of vertices of the simplicial complex $K(M, a)_{p,q}$. That is, $K_{0}(M, a)_{p,q}$ is the space of pairs 
 $(t, \phi)$, where $t \in \R$ and 
$\phi: \bar{W}_{p,q} \longrightarrow M$ is an embedding which satisfies the condition as in part (i) of Definition \ref{defn: the embedding complex}.
\item[(ii)] The space of $l$-simplices, $K_{l}(W, a) \subset (K_{0}(W, a))^{l+1}$ consists of the ordered $l+1$-tuples 
$((t_{0}, \phi_{0}), \dots, (t_{p}, \phi_{p}))$
such that $t_{0} < \cdots < t_{p}$ and $\phi_{i}(C_{p,q})\cap\phi_{j}(C_{p,q}) = \emptyset$ when $i \neq j$.
\end{enumerate}
The spaces $K_{l}(W, a)\subset (\R\times
  \Emb(\bar{W}_{p,q}, M))^{l+1}$ are topologized using the $C^{\infty}$-topology on
the space of embeddings. 
The assignments $[l] \mapsto K_{l}(M, a)$ define a semi-simplicial space denoted by 
$K_{\bullet}(M, a)_{p,q}$.
 
 Finally, 
$\bar{K}_{\bullet}(M, a)_{p,q} \subset K_{\bullet}(M, a)_{p,q}$ is defined to be the sub-semi-simplicial space consisting of all simplices $((\phi_{0}, t_{0}), \dots, (\phi_{l}, t_{l})) \in K_{l}(M, a)$ such that the intersections $\phi_{i}(\bar{W}_{p,q}) \cap \phi_{j}(\bar{W}_{p,q}) = \emptyset$ whenever $i \neq j$.
\end{defn} 
As before we will drop the embedding $a$ and let $K_{\bullet}(M)_{p,q}$ denote the semi-simplicial space $K_{\bullet}(M, a)_{p,q}$.
Similarly we let $\bar{K}_{\bullet}(M)_{p, q}$ denote the semi-simplicial space $\bar{K}_{\bullet}(M, a)_{p,q}$. 
The following result is proven by assembling several results from \cite{GRW 14}. We give an outline of the proof. 
\begin{corollary} \label{prop: high connectivity of simplicial space 1} Let $M$, $p$, and $q$ be as above and let $g$ be such that $r_{p,q}(M) \geq g \geq d = d(\pi_{q}(S^{p}))$. Then $|\bar{K}_{\bullet}(M)_{p,q}|$ 
is $\frac{1}{2}(g-4- d)$-connected. 
\end{corollary}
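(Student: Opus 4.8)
The plan is to deduce this from Theorem \ref{thm: high connectivity of K(M)} together with standard comparison results between a simplicial complex and its associated semi-simplicial space, as carried out in \cite{GRW 14}. The point is that $\bar{K}_{\bullet}(M)_{p,q}$ is a topologized, ordered, ``disjoint-$\bar{W}_{p,q}$'' version of the simplicial complex $K(M)_{p,q}$, and there is a sequence of comparisons linking their geometric realizations without loss of connectivity. First I would compare $\bar{K}_{\bullet}(M)_{p,q}$ to $K_{\bullet}(M)_{p,q}$: the inclusion of sub-semi-simplicial spaces $\bar{K}_{\bullet}(M)_{p,q} \hookrightarrow K_{\bullet}(M)_{p,q}$ induces a weak equivalence on realizations, because an embedding $\phi\colon\bar{W}_{p,q}\to M$ can always be isotoped (rel the prescribed boundary collar) to shrink its image into an arbitrarily small neighborhood of its core $\phi(C_{p,q})$, so disjointness of cores can be upgraded to disjointness of the full handlebodies by a contractible choice of isotopies; this is exactly the argument of \cite[Section 6]{GRW 14} showing the ``cores'' complex and the ``handlebodies'' complex have the same connectivity. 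Hence it suffices to prove $|K_{\bullet}(M)_{p,q}|$ is $\tfrac12(g-4-d)$-connected.

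Next I would compare the semi-simplicial space $K_{\bullet}(M)_{p,q}$ to the simplicial complex $K(M)_{p,q}$. There is a natural map from $|K_{\bullet}(M)_{p,q}|$ to (a thickened model of) $|K(M)_{p,q}|$: a semi-simplex is an \emph{ordered} tuple of vertices with pairwise disjoint cores and strictly increasing $t$-coordinates, which is precisely the data of a simplex of $K(M)_{p,q}$ together with an ordering, and the ordering is forced to be a total order refining the (distinct) $t$-values. The standard fact, again from \cite{GRW 14} (their treatment of such ``ordered'' resolutions, ultimately a consequence of the fact that the $t$-coordinate gives a canonical ordering so that the realization of the ordered complex agrees with that of the unordered one up to weak equivalence), is that $|K_{\bullet}(M)_{p,q}| \simeq |K(M)_{p,q}|$. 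Combined with Theorem \ref{thm: high connectivity of K(M)}, which gives that $|K(M)_{p,q}|$ is $\tfrac12(g-4-d)$-connected whenever $r_{p,q}(M)\geq g$, this yields the claimed connectivity for $|K_{\bullet}(M)_{p,q}|$, and therefore for $|\bar{K}_{\bullet}(M)_{p,q}|$.

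In writing this up I would be explicit that the chain of equivalences $|\bar{K}_{\bullet}(M)_{p,q}| \simeq |K_{\bullet}(M)_{p,q}| \simeq |K(M)_{p,q}|$ holds because: (a) the topology on each $K_l$ is an open/contractible-fibers thickening of the discrete simplex set (the space of embeddings with fixed boundary behavior representing a given isotopy class is contractible in the relevant range by Lemma \ref{lemma: represent by embeddings} and Remark \ref{remark: regular homotopy}, or rather one only needs weak contractibility of the relevant embedding spaces after passing to cores); and (b) the ordering by the real parameter $t$ is unique once the (distinct) values $t_i$ are recorded, so the semi-simplicial structure does not see more than the simplicial complex. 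The main obstacle — and the only step requiring genuine care rather than citation — is verifying that the ``fatten the handlebody down to its core'' isotopy can be performed continuously in families, so that $\bar{K}_\bullet \hookrightarrow K_\bullet$ really is a levelwise weak equivalence (or at least induces an equivalence on realizations); this is where one imports the parametrized isotopy-extension / tubular-neighborhood arguments from \cite{GRW 14} rather than reproving them. Everything else is bookkeeping with semi-simplicial realizations and the hypothesis $g \geq d$ only enters through Theorem \ref{thm: high connectivity of K(M)}.
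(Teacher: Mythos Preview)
Your overall strategy matches the paper's proof sketch exactly: the chain $|\bar{K}_{\bullet}(M)_{p,q}| \simeq |K_{\bullet}(M)_{p,q}| \simeq |K^{\delta}_{\bullet}(M)_{p,q}| \cong |K(M)_{p,q}|$, with the last homeomorphism coming from the fact that the strict ordering $t_{0}<\cdots<t_{l}$ makes ordered and unordered simplices coincide, and the first weak equivalence coming from the ``shrink to core'' argument of \cite[Corollary 5.8]{GRW 14}. So the architecture is right and the citations are the correct ones.

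There is, however, one genuine misstatement in your justification (a). The passage from the topologized semi-simplicial space $K_{\bullet}(M)_{p,q}$ to its discrete version $K^{\delta}_{\bullet}(M)_{p,q}$ is \emph{not} because the spaces of embeddings in a fixed isotopy class are contractible; indeed Lemma~\ref{lemma: represent by embeddings} and Remark~\ref{remark: regular homotopy} only give existence and uniqueness of embeddings up to regular homotopy or isotopy, and say nothing about the homotopy type of the embedding spaces themselves (which are not contractible in general). The actual argument, as in \cite[Theorem 4.6]{GRW 12} or \cite[Theorem 5.5]{GRW 14}, is a lifting argument: one takes a map $S^{k}\to |K_{\bullet}(M)_{p,q}|$, perturbs and simplicially approximates it, and lifts it simplex-by-simplex through the identity-on-underlying-sets map $|K^{\delta}_{\bullet}(M)_{p,q}|\to |K_{\bullet}(M)_{p,q}|$, using that the simplex spaces are open subsets of products of $K_{0}$. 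If you rewrite point (a) to invoke that technique rather than a contractible-fibers claim, the proof goes through as you outline.
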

\begin{proof}[Proof sketch]
Denote by $K^{\delta}_{\bullet}(M)_{p,q}$ the semi-simplicial space defined by setting each $K^{\delta}_{l}(M)_{p,q}$ equal to the space with the same underlying set as $K_{l}(M)_{p,q}$ but equipped with the discrete topology. There is a map $|K^{\delta}(M)_{p,q}| \longrightarrow |K(M)_{p,q}|$ induced by sending an ordered tuple $((\phi_{0}, t_{0}), \dots, (\phi_{l}, t_{l}))$ to its associated unordered set $\{(\phi_{0}, t_{0}), \dots, (\phi_{l}, t_{l})\}$. Any such ordered tuple (in $K^{\delta}_{l}(M)_{p,q}$) is determined by its associated unordered set and thus 
the above map is a homeomorphism. It follows from Theorem \ref{thm: high connectivity of K(M)} that $|K^{\delta}(M)_{p,q}|$ is $\frac{1}{2}(g-4-d)$-connected. Next, we establish $\frac{1}{2}(g-4-d)$-connectivity of
$|K_{\bullet}(M)_{p,q}|$ by comparing $|K_{\bullet}(M)_{p,q}|$ to $|K^{\delta}_{\bullet}(M)_{p,q}|$. This is done by replicating the argument of \cite[Theorem 4.6]{GRW 12} or \cite[Theorem 5.5]{GRW 14}. We then consider the inclusion map $|\bar{K}_{\bullet}(M)_{p,q}| \longrightarrow |K_{\bullet}(M)_{p,q}|$. This map is a weak-homotopy equivalence by \cite[Corollary 5.8]{GRW 14}. From this weak homotopy equivalence it follows that $|\bar{K}_{\bullet}(M)_{p,q}|$ is $\frac{1}{2}(g-4-d)$-connected. 
\end{proof}

We are now ready to construct the resolution $\epsilon_{p,q}: X_{\bullet}(M)_{p,q} \rightarrow \mathcal{M}(M)$. 
Let $\theta: \partial M \hookrightarrow \R^{\infty}$ be the embedding used in the construction of $\mathcal{M}(M)$.
Pick once and for all a coordinate patch $c_{0}: \R^{m-1} \longrightarrow S = \theta(\partial M)$. This choice of coordinate patch induces for any $M' \in \mathcal{M}(M)$, a germ of an embedding
$
[0,1)\times\R^{m-1} \longrightarrow M'
$
as used in the construction of  the semi-simplicial space $\bar{K}_{\bullet}(M')_{p,q}$ from Definition \ref{defn: the embedding complex}. 
\begin{defn} \label{defn: resolution of moduli}
For each non-negative integer $l$, let $X_{l}(M)$ be the set of pairs $(M', \bar{\phi})$ where $M' \in \mathcal{M}(M)$ and $\bar{\phi} \in \bar{K}_{l}(M')_{p,q}$ where $\bar{K}_{l}(M')_{p,q}$ is defined using the embedding germ $[0,1)\times\R^{m-1} \longrightarrow M'$ induced by the chosen coordinate patch $c_{0}: \R^{m-1} \longrightarrow S$. The space $X_{l}(M)$ is topologized as the quotient, $X_{l}(M) = (\mathcal{E}(M)\times \bar{K}_{l}(M)_{p,q})/\Diff^{\partial}(M)$.
The assignments $[l] \mapsto X_{l}(M)_{p,q}$ make $X_{\bullet}(M)_{p,q}$ into a semi-simplicial space where the face maps are induced by the face maps in $\bar{K}_{\bullet}(M)_{p,q}$.

The projection maps
 $X_{l}(M)_{p,q}  \longrightarrow \mathcal{M}(M)$ given by  $(V, \bar{\phi}) \mapsto V$
yield an augmentation map 
$\epsilon_{p,q}: X_{\bullet}(M)_{p,q} \longrightarrow \mathcal{M}(M).$
We denote by $X_{-1}(M)_{p,q}$ the space $\mathcal{M}(M).$ 
\end{defn}
By construction, the projection maps $X_{l}(M)_{p,q}  \rightarrow \mathcal{M}(M)$ are locally trivial fibre-bundles with standard fibre given by $\bar{K}_{l}(M)_{p,q}$. 
From this we have:
\begin{corollary} \label{prop: highly connected resolution}
The map $|\epsilon_{p,q}|: |X_{\bullet}(M)_{p,q}| \longrightarrow \mathcal{M}(M)$ induced by the augmentation is $\frac{1}{2}(r_{p,q}(M) - 2 - d)$-connected. 
\end{corollary}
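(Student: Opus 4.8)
The plan is to identify $|\epsilon_{p,q}|$ with the bundle projection of a fibre bundle whose fibre is the realization $|\bar{K}_{\bullet}(M)_{p,q}|$, and then to combine Corollary \ref{prop: high connectivity of simplicial space 1} with the elementary fact that a fibre bundle with $n$-connected fibre is $(n+1)$-connected. Set $g := r_{p,q}(M)$ and $d := d(\pi_{q}(S^{p}))$. Since the asserted connectivity $\tfrac{1}{2}(g-2-d)$ is negative when $g < d$, we may assume $g \geq d$, which is exactly the hypothesis under which Corollary \ref{prop: high connectivity of simplicial space 1} applies with this value of $g$.

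First I would unwind Definition \ref{defn: resolution of moduli}. Since $X_{l}(M)_{p,q} = (\mathcal{E}(M)\times\bar{K}_{l}(M)_{p,q})/\Diff^{\partial}(M)$ with $\Diff^{\partial}(M)$ acting diagonally and commuting with the (semi-simplicial) face maps, the semi-simplicial space $X_{\bullet}(M)_{p,q}$ is the levelwise Borel construction $\mathcal{E}(M)\times_{\Diff^{\partial}(M)}\bar{K}_{\bullet}(M)_{p,q}$, while $\mathcal{M}(M) = \mathcal{E}(M)\times_{\Diff^{\partial}(M)}*$. Because the face maps act trivially on the topological simplices $\Delta^{l}$, geometric realization of semi-simplicial spaces commutes with the functor $\mathcal{E}(M)\times_{\Diff^{\partial}(M)}(-)$ applied to a fixed $\Diff^{\partial}(M)$-space; this gives a homeomorphism $|X_{\bullet}(M)_{p,q}| \cong \mathcal{E}(M)\times_{\Diff^{\partial}(M)}|\bar{K}_{\bullet}(M)_{p,q}|$ under which $|\epsilon_{p,q}|$ becomes the projection onto $\mathcal{E}(M)\times_{\Diff^{\partial}(M)}* = \mathcal{M}(M)$. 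By the theorem of Binz--Fischer \cite{BF 81} cited in Section \ref{subsection: semi-simplicial resolution}, $\mathcal{E}(M) \to \mathcal{M}(M)$ is a principal $\Diff^{\partial}(M)$-bundle, so $|\epsilon_{p,q}|$ is the associated fibre bundle with fibre $|\bar{K}_{\bullet}(M)_{p,q}|$. (Equivalently: the realization of the levelwise fibre bundle $X_{\bullet}(M)_{p,q} \to \mathcal{M}(M)$ noted just before the statement of this corollary is again a fibre bundle, with fibre the realized fibre.)

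With the fibre bundle in hand, Corollary \ref{prop: high connectivity of simplicial space 1}, applied with $r_{p,q}(M) \geq g \geq d$, shows that the fibre $|\bar{K}_{\bullet}(M)_{p,q}|$ is $\tfrac{1}{2}(g-4-d)$-connected. Running the long exact homotopy sequence of $|\epsilon_{p,q}|$ over each path component of $\mathcal{M}(M)$ then shows that a fibre bundle with $\tfrac{1}{2}(g-4-d)$-connected fibre is $\bigl(\tfrac{1}{2}(g-4-d)+1\bigr)$-connected, that is, $\tfrac{1}{2}(g-2-d) = \tfrac{1}{2}(r_{p,q}(M)-2-d)$-connected, as claimed.

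I do not anticipate a genuine obstacle: all of the substantive work is already packaged into Corollary \ref{prop: high connectivity of simplicial space 1}. The two points requiring a little care are (a) the compatibility of geometric realization of semi-simplicial spaces with the Borel construction over a fixed space — equivalently, that the realization of a levelwise fibre bundle over a fixed base is itself a fibre bundle — and (b) the bookkeeping in the low range of $g$, where the connectivity estimate degenerates and the statement becomes vacuous.
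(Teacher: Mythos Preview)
Your proposal is correct and follows essentially the same approach as the paper: identify the homotopy fibre of $|\epsilon_{p,q}|$ with $|\bar{K}_{\bullet}(M)_{p,q}|$, invoke Corollary \ref{prop: high connectivity of simplicial space 1} for its connectivity, and conclude via the long exact sequence. The only cosmetic difference is that the paper obtains the homotopy-fibre sequence by citing \cite[Lemma 2]{RW 10} (realization of a levelwise fibre bundle over a fixed base), whereas you argue directly that geometric realization commutes with the Borel construction; both are valid routes to the same fibre sequence.
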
 
\begin{proof}
Since the projection maps $X_{l}(M)_{p,q}  \rightarrow \mathcal{M}(M)$ are locally trivial with fibre $\bar{K}_{l}(M)_{p,q}$, it follows from \cite[Lemma 2]{RW 10} that there is a homotopy-fibre sequence
$|\bar{K}_{\bullet}(M)_{p,q}| \rightarrow |X_{\bullet}(M)_{p,q}| \rightarrow \mathcal{M}(M).$
The result follows from the long-exact sequence on homotopy groups. 
\end{proof}

\subsection{Proof of Theorem \ref{thm: Main Theorem 1}} \label{section: homological stability}
We now will show how to use the \textit{semi-simplicial resolution} $\epsilon_{p,q}: X_{\bullet}(M)_{p,q} \rightarrow \mathcal{M}(M)_{p,q}$ to complete the proof of Theorem \ref{thm: Main Theorem 1}.
First, we fix some new notation which will make the steps of the proof easier to state.
For what follows let $M$ be a compact $m$-dimensional manifold with non-empty boundary. As in the previous sections choose positive integers $p$ and $q$ with $p + q = m$ such that 
$p < q < 2p - 2$ and $q - p + 1 < \kappa(M)$. Let $d$ denote the generating set length $d(\pi_{q}(S^{p}))$. 
We work with the same choice of $p$ and $q$ throughout the entire section.
For each $g \in \N$ we denote by $M_{g}$ the manifold obtained by forming the connected-sum of $M$ with $(S^{p}\times S^{q})^{\# g}$.
Notice that $\partial M = \partial M_{g}$ for all $g \in \N$. 
We consider the spaces $\mathcal{M}(M_{g})$.
For each $g \in \N$, the stabilization map from (\ref{ref: p-stabilization map}) yields a map,
$s_{p,q}: \mathcal{M}(M_{g}) \longrightarrow \mathcal{M}(M_{g+1})$, $M' \mapsto V_{p,q}\cup(M' + e_{1}).$
Using the weak equivalence $\mathcal{M}(M_{g}) \sim \BDiff^{\partial}(M_{g})$ and Remark \ref{remark: stabilization map}, 
Theorem \ref{thm: Main Theorem 1} translates to the following:
\begin{theorem} \label{thm: main theorem neq notation}
The induced map
$(s_{p,q})_{*}: H_{k}(\mathcal{M}(M_{g})) \longrightarrow H_{k}(\mathcal{M}(M_{g+1}))$
is an isomorphism when $k \leq \frac{1}{2}(g - 3 - d)$ and is an epimorphism when $l \leq \frac{1}{2}(g - 1 - d)$. 
\end{theorem}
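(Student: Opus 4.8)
The plan is to run the ``canonical resolution'' homological stability argument of Hatcher--Wahl and Galatius--Randal-Williams (cf.\ \cite{HW 10}, \cite{GRW 14}) for the augmented semi-simplicial space $\epsilon_{p,q}\colon X_{\bullet}(M_{g})_{p,q} \longrightarrow \mathcal{M}(M_{g})$. Since $M_{g} = M\#(S^{p}\times S^{q})^{\#g}$ visibly contains $W^{g}_{p,q}$ as a submanifold, we have $r_{p,q}(M_{g}) \geq g$, so by Corollary \ref{prop: highly connected resolution} the map $|\epsilon_{p,q}|$ is $\tfrac{1}{2}(g-2-d)$-connected. The argument is an induction on $g$, the small-$g$ cases being vacuous; throughout, it is convenient to work with the stabilization map in the form $s_{p,q}\colon \mathcal{M}(M_{g-1}) \to \mathcal{M}(M_{g})$ and then re-index $g\mapsto g+1$ at the end.

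The first and hardest task is to identify the spaces of simplices of the resolution. Given a point $(M',\bar{\phi}) \in X_{l}(M_{g})_{p,q}$, one cuts $M'$ along the boundaries of disjoint tubular neighbourhoods of the $l+1$ embedded copies $\bar{\phi}_{0}(\bar{W}_{p,q}),\dots,\bar{\phi}_{l}(\bar{W}_{p,q})$, each of which is joined to $\partial M'$ by a tube; this produces a compact manifold $N$ with $N\#(S^{p}\times S^{q})^{\#(l+1)} \cong M'$ relative to $\partial M$, and hence $r_{p,q}(N) \geq r_{p,q}(M_{g}) - (l+1) \geq g-l-1$. Provided $l \leq g-4-d$ — which covers the range of $l$ relevant below — one may cancel the $l+1$ summands $S^{p}\times S^{q}$ one at a time by repeatedly applying Theorem \ref{theorem: cancelation} (whose hypotheses persist, since the degree of connectivity of each intermediate manifold is at least $\min(\kappa(M),p-1)$, which exceeds $q-p+1$ because $\kappa(M) > q-p+1$ and $q < 2p-2$), and conclude that $N \cong M_{g-l-1}$ relative to $\partial M$. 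Combining this with the bundle description $X_{l}(M_{g})_{p,q} = (\mathcal{E}(M_{g})\times\bar{K}_{l}(M_{g})_{p,q})/\Diff^{\partial}(M_{g})$ and the compatibility of the cutting construction with that bundle structure — carried out exactly as in \cite{GRW 14} — one obtains, for $l$ in the relevant range, a weak homotopy equivalence
\[
X_{l}(M_{g})_{p,q} \;\simeq\; \mathcal{M}(M_{g-l-1}),
\]
under which each face map $d_{i}\colon X_{l}(M_{g})_{p,q} \to X_{l-1}(M_{g})_{p,q}$, and the augmentation $X_{0}(M_{g})_{p,q} \to \mathcal{M}(M_{g})$, is identified up to homotopy with $s_{p,q}$; the independence of the choice of $i$ is precisely the well-definedness of $s_{p,q}$ up to homotopy recorded in Remark \ref{remark: stabilization map}.

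Granting this identification, the remainder is the standard spectral-sequence bookkeeping. The skeletal filtration of the augmented semi-simplicial space yields a spectral sequence with $E^{1}_{s,t} = H_{t}(X_{s}(M_{g})_{p,q}) \cong H_{t}(\mathcal{M}(M_{g-s-1}))$ for $s \geq 0$, with $d^{1}$ the alternating sum of the face maps, and with $E^{\infty}_{s,t} = 0$ in total degrees $\leq \tfrac12(g-2-d)$ (the column $s=-1$ contributing $H_{t}(\mathcal{M}(M_{g}))$ via the augmentation). Since the $l+1$ face maps out of $X_{l}(M_{g})_{p,q}$ all induce $(s_{p,q})_{*}$ on homology, $d^{1}\colon E^{1}_{s,t} \to E^{1}_{s-1,t}$ equals $(s_{p,q})_{*}$ for $s$ even and vanishes for $s$ odd, and the edge homomorphism $E^{1}_{0,t} \to H_{t}(\mathcal{M}(M_{g}))$ is $(s_{p,q})_{*}$. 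The inductive hypothesis — stability in the asserted ranges for all genera strictly smaller than $g$ — then identifies each group $E^{1}_{s,t}$ with $s \geq 1$ occurring in the relevant range with an iterated kernel or cokernel of stabilization maps of strictly smaller genus, so these vanish on the $E^{2}$-page; together with the vanishing of $E^{\infty}$ in low total degree, the usual diagram chase along the bottom rows shows that $(s_{p,q})_{*}\colon H_{k}(\mathcal{M}(M_{g-1})) \to H_{k}(\mathcal{M}(M_{g}))$ is surjective, respectively bijective, in precisely the epimorphism, respectively isomorphism, ranges asserted for $(s_{p,q})_{*}\colon H_{k}(\mathcal{M}(M_{g})) \to H_{k}(\mathcal{M}(M_{g+1}))$ after the shift $g\mapsto g+1$. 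Via the weak equivalences $\mathcal{M}(M_{g}) \sim \BDiff^{\partial}(M_{g})$, this is Theorem \ref{thm: Main Theorem 1}.

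The main obstacle is the identification of the simplex spaces in the second paragraph: controlling the diffeomorphism type of the complement of the embedded handles — which is exactly what the cancellation Theorem \ref{theorem: cancelation} supplies, and the reason that theorem is established beforehand — and verifying that ``cutting'' is compatible with the bundle $X_{l}(M_{g})_{p,q} \to \mathcal{M}(M_{g})$, so that all of the face maps and the augmentation become the single stabilization map up to homotopy. This is the only step requiring geometric input beyond the connectivity Theorem \ref{thm: high connectivity of K(M)}; once it is in hand, the spectral-sequence induction is formal and parallel to \cite{GRW 14}.
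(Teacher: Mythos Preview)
Your proposal is correct and follows essentially the same route as the paper: both use the augmented resolution $X_{\bullet}(M_{g})_{p,q} \to \mathcal{M}(M_{g})$, identify $X_{l}(M_{g})_{p,q} \simeq \mathcal{M}(M_{g-l-1})$ using the cancellation result, and then run the standard spectral-sequence induction from \cite[Section 5]{GRW 12}. The only cosmetic difference is that the paper realizes this identification via explicit ``gluing'' maps $F_{k}\colon \mathcal{M}(M_{g-k-1}) \to X_{k}(M_{g})_{p,q}$ (as in \cite[Propositions 5.3--5.5]{GRW 12}) rather than your ``cutting'' description, but these are inverse constructions resting on the same cancellation theorem (Proposition \ref{proposition: cancelation}).
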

Theorem \ref{thm: main theorem neq notation} is equivalent to Theorem \ref{thm: Main Theorem 1}. Their equivalence follows from the next proposition. 
\begin{proposition} Let $X$ be a compact manifold of dimension $m$ with non-empty boundary. Suppose that $r_{p,q}(X) \geq g$. Then there exists a compact $m$-dimensional manifold $X_{0}$ with $\partial X_{0} = \partial X$ and a diffeomorphism,
$X \stackrel{\cong} \longrightarrow X_{0}\#(S^{p}\times S^{q})^{\# g}.$
\end{proposition}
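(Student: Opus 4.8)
The plan is to read the decomposition off from a single chosen embedding. Since $r_{p,q}(X)\geq g$, by definition there is a smooth embedding $e:W^{g}_{p,q}\hookrightarrow X$. As $e$ is a codimension-zero embedding, its restriction to $\Int(W^{g}_{p,q})$ already has image in $\Int(X)$, and after an ambient isotopy supported near a collar of $\partial X$ we may assume $e(W^{g}_{p,q})\subset\Int(X)$ outright. Put $C:=X\setminus\Int\bigl(e(W^{g}_{p,q})\bigr)$. This is a compact codimension-zero submanifold of $X$ with $\partial C=\partial X\sqcup\Sigma$, where $\Sigma:=e(\partial W^{g}_{p,q})$ is an embedded $(m-1)$-sphere in $\Int(X)$; note that $C$ is connected whenever $X$ is, which is the only situation relevant to the application.

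Next I would cap $C$ off along $\Sigma$ to define $X_{0}$. Recall the defining identity $W^{g}_{p,q}=(S^{p}\times S^{q})^{\# g}\setminus\Int(D^{m})$, which exhibits $(S^{p}\times S^{q})^{\# g}$ as $W^{g}_{p,q}\cup_{h}D^{m}$ for the tautological identification $h:\partial W^{g}_{p,q}\stackrel{\cong}{\longrightarrow}\partial D^{m}$. Set $X_{0}:=C\cup_{\Phi}D^{m}$, where $\Phi:=\bigl(e|_{\partial W^{g}_{p,q}}\bigr)\circ h^{-1}:\partial D^{m}\to\Sigma$ (post-composed, if the orientation convention in the definition of $\#$ demands it, with a reflection of $S^{m-1}$). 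Since the glued-in disk is attached only along the sphere component $\Sigma$ of $\partial C$ and leaves $\partial X$ untouched, $X_{0}$ is a compact $m$-manifold with $\partial X_{0}=\partial X$.

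Finally I would check that $X_{0}\#(S^{p}\times S^{q})^{\# g}\cong X$ by forming the connected sum along the two disks already singled out: the copy of $D^{m}$ glued into $X_{0}$, and the copy of $D^{m}$ appearing in $W^{g}_{p,q}\cup_{h}D^{m}=(S^{p}\times S^{q})^{\# g}$. Deleting the interiors of these two disks recovers $C$ and $W^{g}_{p,q}$ respectively, and chasing the gluing maps shows that $C$ is reattached to $W^{g}_{p,q}$ by exactly the identification $x\mapsto e(x)$ of $\partial W^{g}_{p,q}$ with $\Sigma$; hence the connected sum is $C\cup e(W^{g}_{p,q})=X$. Since the connected sum does not depend on the choice of embedded disks, this computes $X_{0}\#(S^{p}\times S^{q})^{\# g}$ and furnishes the required diffeomorphism. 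I expect the only genuinely delicate part of a careful write-up to be precisely this last bookkeeping of gluing diffeomorphisms — matching the orientation and collar conventions built into the definition of $\#$ with the way $e(W^{g}_{p,q})$ actually sits inside $X$ — which is handled by fixing the identifications $h$ and $\Phi$ as above and invoking the standard independence of the connected sum from the choices made.
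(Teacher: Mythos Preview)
Your argument is correct and is in fact more elementary than the paper's. You work directly from the defining embedding $e:W^{g}_{p,q}\hookrightarrow X$ supplied by $r_{p,q}(X)\geq g$, excise its image, cap the resulting sphere boundary $\Sigma$ with a disk to form $X_{0}$, and then observe that undoing this cap along the matching disk in $(S^{p}\times S^{q})^{\#g}=W^{g}_{p,q}\cup_{h}D^{m}$ returns exactly $C\cup_{e|_{\partial}}W^{g}_{p,q}\cong X$. This handles all $g$ in one stroke and uses nothing beyond the definition of $r_{p,q}$ and the standard well-definedness of connected sum; your remark that $C$ (hence $X_{0}$) is connected when $X$ is follows because $\Sigma\cong S^{m-1}$ is a connected two-sided hypersurface in the connected manifold $\Int(X)$, so its complement has at most two components, one of which is $\Int(e(W^{g}_{p,q}))$.

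The paper takes a different route: it argues by induction on $g$, and for the base case it invokes Theorem~\ref{thm: high connectivity of K(M)} to produce a vertex of $K(X)_{p,q}$, namely an embedding $f:\bar{W}_{p,q}\to X$ meeting $\partial X$ in the prescribed collar fashion. It then shows that $f(\bar{W}_{p,q})$ together with a collar of $\partial X$ is a copy of $V_{p,q}=([0,1]\times\partial X)\#(S^{p}\times S^{q})$ inside $X$, and takes $X_{0}$ to be the complement. The advantage of the paper's phrasing is that it stays within the language of boundary-attached embeddings used throughout Section~\ref{section: homological stability}; the cost is that, as written, it leans on Theorem~\ref{thm: high connectivity of K(M)} and hence implicitly on the standing hypotheses $p<q<2p-2$ and $q-p+1<\kappa(X)$. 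Your argument avoids that dependence entirely and would prove the proposition for arbitrary $p\leq q$ with $p+q=m$.
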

\begin{proof} First let $r_{p,q}(X) = 1$. By Theorem \ref{thm: high connectivity of K(M)} there exists an embedding    
$f: \bar{W}_{p,q} \longrightarrow X$ which satisfies the boundary condition from the definition of the simplicial complex $K(X)_{p,q}$. Let $U \subset X$ be a closed neighborhood of the boundary diffeomorphic to $\partial X\times[0,1]$ and such that $U\cap f(\bar{W}_{p,q})$ is diffeomorphic to a disk.
The submanifold $f(\bar{W}_{p,q})\cup U$ is diffeomorphic (after smoothing corners) to the manifold obtained by forming the connected sum of $[0,1]\times\partial X$ with $S^{p}\times S^{q}$. 
We the define $X_{0} := X\setminus\Int[f(\bar{W}_{p,q})\cup U]$. It follows that $X_{0}\#(S^{p}\times S^{q})\cong X$. This proves the proposition in the case that $r_{p,q}(X) = 1$. The general case follows by induction. 
\end{proof}

Since $r(M_{g}) \geq g$ for $g \in \N$, it follows from Corollary \ref{prop: highly connected resolution} that the map 
$$|\epsilon_{p,q}|: |X_{\bullet}(M_{g})_{p,q}| \longrightarrow X_{-1}(M_{g})_{p,q} := \mathcal{M}(M_{g}).$$
 is $\frac{1}{2}(g - 2-d)$-connected. With this established, the proof of Theorem \ref{thm: main theorem neq notation} proceeds in exactly the same way as in \cite[Section 5]{GRW 12}. We provide an outline for how to complete the proof and refer the reader to \cite[Section 5]{GRW 12} for details.
For what follows we fix $g \in \N$. 
For each non-negative integer $k \leq g$ there is a map
\begin{equation} \label{eq: resolution level map}
F_{k}: \mathcal{M}(M_{g-k-1}) \longrightarrow X_{k}(M_{g})_{p,q}
\end{equation}
which is defined in exactly the same way as the map from \cite[Proposition 5.3]{GRW 12}.
From \cite[Proposition 5.3, 5.4 and 5.5]{GRW 12} we have the following.
\begin{proposition} \label{prop: homotopy commutativity} Let $g \geq 4 + d$. We have the following:
\begin{enumerate}
\item[i.] The map $F_{k}: \mathcal{M}(M_{g-k-1}) \longrightarrow X_{k}(M_{g})_{p,q}$ is a weak homotopy equivalence. 
\item[ii.] The following diagram is commutative, 
$$\xymatrix{
\mathcal{M}(M_{g-k-1}) \ar[d]^{F_{k}} \ar[rr]^{S_{p,q}} && \mathcal{M}(M_{g-k}) \ar[d]^{F_{k}} \\
X_{k}(M_{g})_{p,q} \ar[rr]^{d_{k}} && X_{k-1}(M_{g})_{p,q}.
}$$
\item[iii.] The face maps $d_{i}: X_{k}(M_{g})_{p,q} \longrightarrow X_{k-1}(M_{g})_{p,q}$ are weakly homotopic. 
\end{enumerate}
\end{proposition}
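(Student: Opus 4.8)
The plan is to translate each of the three assertions into a statement about the semi-simplicial resolution $\epsilon_{p,q}: X_{\bullet}(M_{g})_{p,q} \to \mathcal{M}(M_{g})$ and then invoke the corresponding step in \cite[Section 5]{GRW 12}, making only the bookkeeping changes needed for the $p \neq q$ setting. First I would make precise the map $F_{k}$ of (\ref{eq: resolution level map}): given $M' \in \mathcal{M}(M_{g-k-1})$, the manifold $M'$ has $r_{p,q} \geq g-k-1 \geq 3$, so by the proposition above it is a connected sum $M'_{0}\#(S^p\times S^q)^{\# (g-k-1)}$; stabilizing $k+1$ times using the fixed embedding $\alpha$ produces a point of $\mathcal{M}(M_{g})$ together with $k+1$ ordered, pairwise-disjoint (not merely disjoint-core) copies of $\bar W_{p,q}$ sitting in the added handles $V_{p,q}$, i.e.\ a point of $X_{k}(M_g)_{p,q}$. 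This is verbatim the construction of \cite[Proposition 5.3]{GRW 12}, with $S^n\times S^n$ replaced by $S^p\times S^q$ throughout.

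For part (i), the argument is that $F_{k}$ is, up to the weak equivalence $\mathcal{M}(-) \sim \BDiff^{\partial}(-)$, the classifying-space map induced by the inclusion $\Diff^{\partial}(M_{g-k-1}) \hookrightarrow \Diff^{\partial}(M_g)$ restricted to diffeomorphisms fixing the chosen handles, and that this inclusion is a homotopy equivalence because the space of embeddings of $k+1$ disjoint standard handles with the prescribed boundary behaviour is (weakly) contractible — this is exactly where the hypothesis $g \geq 4+d$ and the high connectivity of $\bar K_\bullet$ from Corollary \ref{prop: high connectivity of simplicial space 1} enter, guaranteeing the relevant embedding space is nonempty and that $X_k(M_g)_{p,q}$ is a bundle over $\mathcal{M}(M_g)$ with the asserted fibre. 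I would cite \cite[Proposition 5.3]{GRW 12} for the formal argument. Part (ii) is a diagram-chase: the top horizontal map $S_{p,q}$ adds one more handle on the collar side, while the face map $d_k$ forgets the $k$-th handle; after unwinding the definitions of $F_k$ and of $\alpha$, both composites send $M'$ to the same submanifold of $[0,\infty)\times\R^\infty$ up to the isotopy of handle-embeddings guaranteed by Remark \ref{remark: stabilization map}, hence they agree on $\mathcal{M}(-)$. This is \cite[Proposition 5.4]{GRW 12}.

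Part (iii) — that the face maps $d_i: X_k(M_g)_{p,q} \to X_{k-1}(M_g)_{p,q}$ are all weakly homotopic — is the one requiring genuine input and is the main obstacle. The idea is that forgetting the $i$-th handle versus the $j$-th handle differ by ``sliding'' one handle past the others along a path in $M_g$; since $M_g$ is simply connected and the handles are small, such a slide can be realized by an isotopy, and one assembles these into a weak homotopy between $d_i$ and $d_j$. Concretely I would quote \cite[Proposition 5.5]{GRW 12} directly: the proof there is purely about disjoint embedded handles with a common boundary coordinate patch, uses only simple-connectivity and a general-position/isotopy-extension argument, and never uses $p=q$, so it applies in our setting with the same connectivity hypotheses $p<q<2p-2$ and $q-p+1<\kappa(M)$ already in force (which in particular make $M_g$ simply connected, since $\kappa(M_g) = \kappa(M) \geq 1$). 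The only point to verify is that the handles $\bar W_{p,q}$ used here play the same role as the handles in \cite{GRW 12}, which is immediate from their construction in Section \ref{subsection: embedded handles}. Having established (i)–(iii), the standard spectral-sequence-of-a-semi-simplicial-space argument of \cite[Section 5]{GRW 12} then yields Theorem \ref{thm: main theorem neq notation}, but that is carried out after this proposition and is not part of its proof.
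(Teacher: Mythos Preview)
Your overall strategy---reducing each part to the corresponding statement in \cite[Propositions 5.3--5.5]{GRW 12}---matches the paper's, which does exactly this and gives no further detail beyond a remark naming the key ingredients. However, your explanation of part (i) contains a genuine error. The space of embeddings of $k+1$ disjoint handles with the prescribed boundary behaviour is \emph{not} weakly contractible: $\bar{K}_{k}(M_{g})_{p,q}$ has many path components, and each component is a homogeneous space of the form $\Diff^{\partial}(M_{g})/\text{Stab}$, which is far from contractible. Consequently the inclusion $\Diff^{\partial}(M_{g-k-1}) \hookrightarrow \Diff^{\partial}(M_{g})$ is not a homotopy equivalence, and your stated reason for (i) does not go through.

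The correct mechanism, as indicated in the remark immediately following the proposition in the paper, rests on two ingredients you do not mention. First, one needs the analogue of \cite[Corollary 4.4]{GRW 12}: the group $\Diff^{\partial}(M_{g})$ acts transitively on $\pi_{0}\bigl(\bar{K}_{k}(M_{g})_{p,q}\bigr)$. This is what the hypothesis $g \geq 4 + d$ buys---it makes $|K(M_{g})_{p,q}|$ path-connected (Theorem \ref{thm: high connectivity of K(M)}), and transitivity follows as in \cite{GRW 12}; it is path-connectivity, not high connectivity, that is used here. Second, one needs Proposition \ref{proposition: cancelation} (cancellation) to identify the complement of a standard $(k+1)$-tuple of handles with $M_{g-k-1}$, so that the stabiliser is $\Diff^{\partial}(M_{g-k-1})$. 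With these two inputs, the Borel construction $X_{k}(M_{g})_{p,q} = \mathcal{E}(M_{g})\times_{\Diff^{\partial}(M_{g})}\bar{K}_{k}(M_{g})_{p,q}$ is identified with $\mathcal{E}(M_{g})/\Diff^{\partial}(M_{g-k-1}) \simeq \BDiff^{\partial}(M_{g-k-1})$, giving (i). Your accounts of (ii) and (iii) are fine in outline, though note that (iii) again uses the transitivity ingredient rather than a bare general-position argument.
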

\begin{remark} 
The key ingredients needed to prove Proposition \ref{prop: homotopy commutativity} are Proposition \ref{proposition: cancelation} and an analogue of \cite[Corollary 4.4]{GRW 12}. As with Proposition \ref{proposition: cancelation}, the version of \cite[Corollary 4.4]{GRW 12} relevant to our situation is proven in the same way.
\end{remark}

Consider the spectral sequence associated to the augmented semi-simplicial space $X_{\bullet}(M_{g})_{p,q} \rightarrow \mathcal{M}(M_{g})$
with $E^{1}$-term given by 
$E^{1}_{j,l} = H_{j}(X_{l}(M_{g})_{p,q})$ for $l \geq -1$ and $j \geq 0$.
The differential is given by $d^{1} = \sum(-1)^{i}(d_{i})_{*}$, where $(d_{i})_{*}$ is the map on homology induced by the $i$th face map in $X_{\bullet}(M_{g})_{p,q}$. 
The group $E^{\infty}_{j, l}$ is a subquotient of the relative homology group $H_{j+l+1}(X_{-1}(M_{g})_{p,q}, |X_{\bullet}(M_{g})_{p,q}|)$. Proposition \ref{prop: homotopy commutativity} together with Corollary \ref{prop: highly connected resolution} imply the following:
\begin{enumerate}
\item[(a)] For $g \geq 4 + d$, there are isomorphisms $E^{1}_{j,l} \cong H_{l}(\mathcal{M}(M_{g-j-1})).$
\item[(b)] The differential
$d^{1}: H_{l}(\mathcal{M}(M_{g-j-1})) \cong E^{1}_{j,l} \longrightarrow E^{1}_{j-1,l} \cong H_{l}(\mathcal{M}(M_{g-j}))$ is equal to 
$(s_{p,q})_{*}$ when $j$ is even and is equal to zero when $j$ is odd.
\item[(c)] The term $E^{\infty}_{j,l}$ is equal to $0$ when $j +l \leq \frac{1}{2}(g-2-d)$. 
\end{enumerate}
To complete the proof one uses (c) to prove that the differential $d^{1}: E^{1}_{2j,l} \longrightarrow E^{1}_{2j-1,l}$ is an isomorphism 
when $0 < j \leq \frac{1}{2}(g - 3 - d)$  and an epimorphism when $0 < j \leq \frac{1}{2}(g -1 - d)$. 
This is done by carrying out the inductive argument given in \cite[Section 5.2: \textit{Proof of Theorem 1.2}]{GRW 12}. This establishes Theorem \ref{thm: main theorem neq notation} and the main result of this paper, Theorem \ref{thm: Main Theorem 1}.


\begin{thebibliography}{9}
\bibitem{BF 81} E. Binz and H. R. Fischer, \textit{The manifold of embeddings of a closed manifold}, Differential geometric methods in mathematical physics (Proc. Internat. Conf., Tech. Univ. Clausthal, Clausthal-Zellerfeld, 1978), Lecture Notes in Phys., vol. 139, Springer, Berlin, 1981, With an appendix by P. Michor, pp. 310- 329.
\bibitem{GRW 12} S. Galatius, O. Randal-Williams, \textit{Homological
Stability for Moduli Spaces of High Dimensional Manifolds,}
arXiv:1203.6830 (2012)
\bibitem{GRW 14} \underline{\ \ \ \ \ \ \ \ \ }, \textit{Homological
Stability for Moduli Spaces of High Dimensional Manifolds, 1},
arXiv:1403.2334 (2014)
\bibitem{Ha 61} A. Haefliger, \textit{Plongements diff\'{e}rentiables de
vari\'{e}t\'{e}s dans vari\'{e}t\'{e}s}, Commentarii Mathematici Helvetici
(1961-1962)
\bibitem{HQ 74} A. Hatcher, F. Quinn, \textit{Bordism Invariants of
Intersections of Submanifolds,} Transactions of the American
Mathematical Society (1974)
\bibitem{HW 10} A. Hatcher, N. Wahl, \textit{Stabilization for Mapping Class Groups of $3$-Manifolds}, Duke Math J. (2010)
\bibitem{Kr 99} M. Kreck, \textit{Surgery and Duality,} Annals of Mathematics (1999)
\bibitem{M 65} J. Milnor, \textit{Lectures On The h-Cobordism Theorem}, Princeton University Press (1965)
\bibitem{RW 10} O. Randal-Williams, \textit{Resolutions of Moduli Spaces and Homological Stability}, arXiv:0909.4278 (2010)
\bibitem{W 63} C.T.C. Wall, \textit{Classification Problems in
Differential Topology-I}, Topology Vol. 6, pp. 273- 296. (1967)
\bibitem{We 67} R. Wells, \textit{Modifying Intersections}, Illinois J. Math. (1967)

\end{thebibliography}
\end{document}